\newtheorem{theorem}{Theorem}[section]
\newtheorem{corollary}[theorem]{Corollary}
\newtheorem{prop}[theorem]{Proposition}
\newtheorem{lemma}[theorem]{Lemma}
\theoremstyle{remark}
\newtheorem{remark}[theorem]{Remark}
\theoremstyle{definition}
\newtheorem{defn}[theorem]{Definition}
\newtheorem{examples}[theorem]{Examples}
\numberwithin{equation}{section}
\numberwithin{theorem}{section}
\newcommand{\fastx}{f\ast\Theta_t(x)}
\newcommand{\fast}{f\ast\Theta_t}
\newcommand{\sqrtpt}{\sqrt{\pi t}}
\newcommand{\balexc}{{\mathcal B}_c}
\newcommand{\balexct}{{\mathcal B}_{c,\tau}}
\newcommand{\acn}{{\mathcal A}^n_c}
\newcommand{\alexc}{{\mathcal A}_c}
\newcommand{\alexct}{{\mathcal A}_{c,\tau}}
\newcommand{\Rbar}{\overline{\R}}
\newcommand{\D}{{\mathcal D}(\R)}
\newcommand{\Dp}{{\mathcal D}'(\R)}
\newcommand{\bv}{{\mathcal BV}}
\newcommand{\intinf}{\int^\infty_{-\infty}}
\newcommand{\N}{{\mathbb N}}
\newcommand{\R}{{\mathbb R}}
\newcommand{\C}{{\mathbb C}}
\newcommand{\fn}{\!:\!}
\newcommand{\linf}{\inf\limits}
\newcommand{\Ibv}{{\mathcal IBV}}
\newcommand{\Ibvn}{{\mathcal IBV}^n}
\newcommand{\nbv}{\mathcal NBV}
\providecommand{\abs}[1]{\lvert#1\rvert}
\providecommand{\norm}[1]{\lVert#1\rVert}
\providecommand{\acany}[1]{{\mathcal A}_c^{#1}}
\providecommand{\acnorm}[1]{\norm{#1}^{(n)}}
\providecommand{\actnorm}[1]{\norm{#1}_{\tau}}
\providecommand{\alexcany}[1]{{\mathcal A}_{c,#1}}
\begin{document}
\subjclass{Primary 35K05, 
46F05.  Secondary 26A39, 46F10, 46G12}

\keywords{heat equation, continuous primitive integral, 
Henstock--Kurzweil integral,
Schwartz distribution, generalised function, Alexiewicz norm, 
partial differential equation, convolution, Banach space}
\date{Preprint January 16, 2015.  To appear in {\it Advances in Pure and Applied Mathematics}.}
\title[Heat equation in Alexiewicz norm]
{The one-dimensional heat equation in the Alexiewicz norm}
\author{Erik Talvila}
\address{Department of Mathematics \& Statistics\\
University of the Fraser Valley\\
Abbotsford, BC Canada V2S 7M8}
\email{Erik.Talvila@ufv.ca}
\thanks{Written while visiting the Department of Mathematics,
University of Arizona.  Thank you for the generous hospitality.}

\begin{abstract}
A distribution on the real line has a continuous primitive integral if
it is the distributional derivative of a function that is continuous
on the extended real line.
The space of
distributions integrable in this sense is a Banach space that includes
all functions integrable in the Lebesgue and Henstock--Kurzweil senses.
The one-dimensional heat equation is considered with initial data that
is integrable in the sense of the continuous primitive integral.
Let $\Theta_t(x)=\exp(-x^2/(4t))/\sqrt{4\pi t}$ be the heat kernel.
With initial
data $f$ that is the distributional derivative of a continuous function,
it is shown that $u_t(x):=u(x,t):=f\ast\Theta_t(x)$  is a classical solution of
the heat equation $u_{11}=u_2$.  The estimate
$\|f\ast\Theta_t\|_\infty\leq\|f\|/\sqrt{\pi t}$ holds.
The Alexiewicz norm is $\|f\|=\sup_I|\int_If|$,
the supremum taken over all intervals.  
The initial data is taken on
in the Alexiewicz norm, $\|u_t-f\|\to 0$ as $t\to 0^+$.  
The solution of the heat equation is unique under the assumptions
that $\|u_t\|$ is bounded and $u_t\to f$ in the Alexiewicz
norm for some integrable $f$.
The heat equation is also considered with initial data that is the
$n$th derivative of a continuous function and in weighted spaces
such that $\int_{-\infty}^\infty f(x)\exp(-ax^2)\,dx$ exists for some $a>0$.  Similar results
are obtained.
\end{abstract}

\maketitle

\section{Introduction}\label{sectionintroduction}
The one-dimensional heat equation is the canonical parabolic 
partial differential equation of second order.  For simple geometries
solutions can be represented explicitly as series or integrals.  It is
well-known that with heat conduction on an infinite rod the solution
is given by a convolution of initial data with the heat kernel.  The
classical problem is:
\begin{align}
&u \in C^2(\R)\times C^1((0,\infty))
\text{ such that}\label{heatclassic}\\
&u_t-u_{xx}=0 \text{ for } (x,t)\in\R\times(0,\infty).
\label{heatpde}
\end{align}
The Gauss--Weierstrass heat kernel is
$\Theta_t(x)=\Theta(x,t)=(4\pi \abs{t})^{-1/2}e^{-x^2/(4t)}$ (defined for
$t\not=0$). The solution
of \eqref{heatclassic}-\eqref{heatpde} is then given by the convolution
of initial data $f$ with the heat kernel
\begin{equation}
\fastx=\intinf f(x-\xi)\Theta_t(\xi)\,d\xi=
\intinf f(\xi)\Theta_t(x-\xi)\,d\xi.\label{convolution}
\end{equation}

There are various ways to impose initial conditions.  If $f$ is bounded
and continuous on $\R$ then $\fastx$ is continuous on $\R\times[0,\infty)$
and $\norm{f\ast\Theta_t-f}_\infty\to 0$ as $t\to 0^+$.
If $f\in L^p(\R)$
for some $1\leq p<\infty$ then $\norm{f\ast\Theta_t-f}_p\to 0$ as
$t\to 0^+$.  For example, see \cite{cannon}, \cite{widderbook}, 
\cite{hirschmanwidder}.

In this paper we allow the integral in \eqref{convolution} to exist
as a continuous primitive integral.
This integration process includes
the Lebesgue and Henstock--Kurzweil integrals with respect to Lebesgue
measure.
An attractive feature of this integral
is that the space of integrable distributions is a
Banach space isometrically isomorphic to the continuous functions on 
the extended real line that vanish at $-\infty$.
These distributions tend to behave much more
like $L^p$ functions than arbitrary distributions and many properties
typical
of functions hold in these spaces, such as integration by parts, continuity
in norm, H\"older inequality, etc.
Whereas results involving arbitrary distributions hold weakly, here we
have results holding in the strong (norm) sense. 

The main idea behind the continuous primitive integral is to define a
class of continuous functions (the primitives) that is a Banach space
under a uniform or weighted uniform norm.  The integrable distributions
are then defined to be the distributional derivatives of the primitives.
Integration is defined via the fundamental theorem of calculus: 
If $F$ is a continuous function on $\Rbar$ then its distributional 
derivative
$F'$ is integrable and $\int_a^b F'=F(b)-F(a)$ for all
$a,b\in\Rbar$.
The space of integrable distributions is a Banach space isometrically 
isomorphic to the space of primitives.  See the following section for
more detail.

We define three classes of
integrable distributions.  First, primitives are the continuous functions
on the extended real line, vanishing at $-\infty$ 
(Section~\ref{sectionalex}).  This integral then
includes the Lebesgue and Henstock--Kurzweil integrals with respect
to Lebesgue measure.  The formula $\int_a^b F'=F(b)-F(a)$ continues
to hold when $F$ is continuous, even if the pointwise derivative
vanishes almost everywhere or fails to exist at any point.
Secondly, we take
higher derivatives of such primitives (Section~\ref{sectionhigher}).  
This includes functions with
algebraic singularities of order $(x-x_0)^{-a}$ at any point $x_0\in\R$,
for all exponents $a>0$.  Finally, we use primitives such that
the weighted integral $\intinf f(x)\exp(-ax^2)\,dx$ exists for some
$a>0$
(Section~\ref{sectionweighted}).  This
includes the $L^p$ spaces with respect to Lebesgue measure and weights 
$\exp(-ax^2)$
($a>0$).  

For each of
these cases we prove theorems of the following type, the respective norm shown
generically as $\norm{\cdot}$.  For $f$ integrable in one of the three
above senses: $f\ast\Theta_t(x)$ is separately analytic in $x$ and $t$;
is a classical solution of the heat equation 
\eqref{heatclassic}-\eqref{heatpde}; satisfies an inequality 
$\norm{f\ast\Theta_t}\leq C_t\norm{f}$ for a sharp constant $C_t$.
The initial data is taken on in the sense that
if $u_t(x)=\fastx$ then
\begin{equation}
\norm{u_t-f}\to 0 \text{ as } t\to 0^+.\label{heatic}
\end{equation}
A uniqueness
theorem is proved under \eqref{heatclassic}-\eqref{heatpde} with the
only additional assumptions $\norm{u_t}$ is bounded and
$\norm{u_t-f}\to 0$ as $t\to 0^+$ for some integrable distribution $f$
(Section~\ref{sectionuniq}).
Pointwise and norm estimates are given for $f\ast\Theta_t$ 
and its derivatives.

An extensive bibliography on classical results appears in \cite{cannon}.  
See also \cite{widderbook}.  For initial data in $L^p$ spaces see
\cite{gustafson} and
\cite{hirschmanwidder}.  The heat equation is considered in weighted
spaces of signed measures in \cite{watson}.  Distributional solutions
are studied in \cite{dautraylions} and  \cite{szmydt}.  These last two
works consider tempered distributions, for which Fourier transform
methods apply.  The initial data is taken on in the distributional
(weak) sense.  An important feature of our results is that all of the
distributions we consider (some tempered, some not tempered) are in Banach
spaces and the initial data is taken on in the strong (norm) sense.

\section{The continuous primitive integral}\label{sectiondistributionalintegrals}

The notation we use for distributions is standard.
The test functions are $\D=C^\infty_c(\R)$.  Distributions are
denoted $\Dp$. 
We  will  usually denote distributional
derivatives by $F'$ and  pointwise derivatives by $F'(t)$.
If $f$ is a locally integrable function in the Lebesgue or 
Henstock--Kurzweil sense we identify $f$ with its distribution
$T_f$ defined by $\langle T_f,\phi\rangle=\intinf f(x)\phi(x)\,dx$ for
$\phi\in\D$.
The results on distributions we use
can  be found in \cite{friedlanderjoshi} or \cite{folland}.

Denote the extended real line by $\Rbar=[-\infty,\infty]$.  We define
$C(\Rbar)$ to be the continuous functions $F$ such that 
$F(\infty)=\lim_{x\to\infty}F(x)$
and $F(-\infty)=\lim_{x\to-\infty}F(x)$ both exist as real numbers.
The space of primitives for the continuous primitive integral is 
$\balexc=\{F\in  
C(\Rbar)\mid F(-\infty)=0\}$.
Note that since $F(-\infty)=0$, $\balexc$ is a Banach space under the norm 
$\norm{F}'_\infty=\sup_{x<y}\abs{F(x)-F(y)}$.  Let 
$
\alexc =\{f\in\Dp\mid f=F' \mbox{ for some }
F\in\balexc\}$. We have made the arbitrary but convenient choice of making
primitives vanish at $-\infty$.  A consequence is that if $f\in\alexc$ then 
it has a unique primitive 
$F\in\balexc$.
Otherwise, the primitives of $f$ would differ by a constant.  The integral is then defined
$\int_a^bf=F(b)-F(a)$ for all $a,b\in\Rbar$.  
Hence, a distribution $f$ has a continuous primitive integral if it is the 
distributional
derivative of a function $F\in\balexc$, i.e., for all $\phi\in\D$ we have
$\langle f,\phi\rangle
=\langle F',\phi\rangle=-\langle F,\phi'\rangle=-\int_{-\infty}^\infty
F(x)\phi'(x)\,dx$.  Since $F$ and $\phi'$ are continuous and $\phi'$ has compact
support, the integral defining the derivative exists as a Riemann integral.  
The Alexiewicz norm of $f\in\alexc$ is
$\norm{f}:=\sup_{x<y}\abs{\int_x^yf}=\sup_{x<y}\abs{F(x)-F(y)}:=\norm{F}'_\infty$ where
$F\in\balexc$ is the primitive of $f$.
Note that $\alexc$ is a Banach space isometrically
isomorphic to $\balexc$.
We get an equivalent norm when we fix
$x=-\infty$. Then $\norm{f}':=\norm{F}_\infty$ and this makes $\alexc$ into a
Banach space that is isometrically isomorphic to $\balexc$ with the uniform norm.  For $F\in\balexc$,
$\norm{F}_\infty\leq\norm{F}'_\infty\leq 2\norm{F}_\infty$.  However, the form
of Alexiewicz norm we are using is somewhat more convenient.
Note that
$L^1$ and the spaces of Henstock--Kurzweil and wide Denjoy integrable
functions are contained in $\alexc$ since these spaces all have
primitives that are continuous \cite{gordon}.  (They are in fact 
dense subspaces of $\alexc$.)  
The function $f(x)=x^{-2}\sin(x^{-3})$ is not in $L^1_{loc}$ but is
in $\alexc$, as can be seen via integration by parts.
If $F\in C(\Rbar)$ and singular ($F'(x)=0$
for almost all $x\in\R$) then the Lebesgue integral of $F'$ exists and
is $\int_E F'(x)\,dx=0$ for each measurable set $E$.  But $F'\in\alexc$ with
continuous
primitive integral $\int_a^bF'=F(b)-F(a)$.  If $F\in C(\Rbar)$ such that
it has a pointwise derivative nowhere then the Lebesgue integral of $F'$
is meaningless but $F'\in\alexc$ with
continuous primitive integral $\int_a^bF'=F(b)-F(a)$.  
The Alexiewicz norm seems to first appear in \cite{alexiewicz}.
The continuous primitive integral has its genesis in the work of 
Mikusi\'nksi and Ostaszewski \cite{pmikusinski};
Bongiorno and  Panchapagesan \cite{bongiornopanchapagesan};
Ang, Schmitt, Vy \cite{ang}; B\"aumer, Lumer and Neubrander 
\cite{baumerlumerneubrander}.
For a detailed 
overview, see \cite{talviladenjoy}.

If $g\fn\R\to\R$ its variation is $Vg=\sup\sum\abs{g(x_i)-g(y_i)}$
where the supremum is taken over all disjoint intervals $(x_i,y_i)\subset\R$.
The functions of bounded variation are denoted $\bv$.  If $g\in\bv$ then it
has limits at infinity and we define $g(\pm\infty)=\lim_{x\to\pm\infty}g(x)$.
Functions of bounded variation form the
multipliers for $\alexc$.  If $f\in\alexc$ with primitive
$F\in\balexc$ and $g\in\bv$ then the integration by 
parts formula is
\begin{equation}
\intinf fg=F(\infty)g(\infty)-\intinf F(x)\,dg(x).\label{generalparts}
\end{equation}
The last integral is a Henstock--Stieltjes integral.  See, for example, \cite{mcleod}.
When $g$ is absolutely continuous the formula simplifies to
the Lebesgue integral
\begin{equation}
\intinf fg=F(\infty)g(\infty)-\intinf F(x)g'(x)\,dx.\label{parts}
\end{equation}
A type of H\"older inequality for $f\in\alexc$ and $g\in\bv$ is
\cite[Lemma~24]{talvilafouriertransform}
\begin{equation}
\left|\intinf fg\right|\leq
\left|\intinf f\right|\inf\abs{g}+\norm{f} Vg\leq \norm{f}\left(\abs{g(\infty)}+Vg\right).
\label{holder}
\end{equation}

A convergence theorem for the continuous primitive integral
\cite[Theorem~22]{talviladenjoy}:
\begin{theorem}\label{proplimit}
Let $f\in\alexc$.  Suppose $\{g_n\}\subset\bv$ such that there is
$M\in\R$ so that for all
$n\in\N$, $Vg_n\leq M$.  If $g_n\to g$ on $\Rbar$ for a function $g\in\bv$
then $\lim_{n\to\infty}\intinf fg_n = \intinf fg$.
\end{theorem}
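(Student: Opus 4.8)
The plan is to use the integration by parts formula \eqref{generalparts} to convert the problem into the convergence of Henstock--Stieltjes integrals against a \emph{fixed} continuous integrand, and then to establish that convergence by uniform approximation. Let $F\in\balexc$ be the primitive of $f$. Applying \eqref{generalparts} to $g_n$ and to $g$ gives
\[
\intinf fg_n-\intinf fg=F(\infty)\bigl(g_n(\infty)-g(\infty)\bigr)-\left(\intinf F\,dg_n-\intinf F\,dg\right).
\]
Since $g_n\to g$ on $\Rbar$ we have $g_n(\infty)\to g(\infty)$, so the boundary term vanishes in the limit and it suffices to prove $\intinf F\,dg_n\to\intinf F\,dg$; this is a Helly-type second theorem adapted to the unbounded interval. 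Note that the direct route through \eqref{holder} fails, since that bound involves $V(g_n-g)$, which need not tend to $0$ under mere pointwise convergence.

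For the Stieltjes integrals I would run an $\varepsilon/3$ argument. Because $F$ is continuous on the compact space $\Rbar$ it is uniformly continuous, so given $\varepsilon>0$ there is a step function $s$, constant on each of finitely many subintervals of $\Rbar$ (the two unbounded tail intervals included), with $\norm{F-s}_\infty<\varepsilon$. Using the standard estimate $\abs{\intinf u\,dg}\leq\norm{u}_\infty\,Vg$ for the Stieltjes integral together with the uniform variation bound $Vg_n\leq M$, I obtain
\[
\abs{\intinf F\,dg_n-\intinf F\,dg}\leq\varepsilon M+\abs{\intinf s\,dg_n-\intinf s\,dg}+\varepsilon\,Vg.
\]
The middle term is a finite linear combination of increments of $g_n$ and $g$ taken at the breakpoints of $s$, so it reduces to the pointwise convergence of $g_n$ there.

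The main obstacle is precisely this middle term, since for a step integrand the Stieltjes integral is sensitive to the values of $g_n$ and $g$ at the breakpoints, which may be jump discontinuities. I would sidestep this by choosing the breakpoints of $s$ in the complement of the union of the discontinuity sets of $g$ and of all the $g_n$; being a countable union of countable sets this exceptional set is countable, so its complement is dense and such breakpoints can be chosen while retaining $\norm{F-s}_\infty<\varepsilon$. At these common points of continuity the increments of $g_n$ converge to those of $g$ by pointwise convergence on $\Rbar$, so the middle term tends to $0$. Letting $n\to\infty$ and then $\varepsilon\to0$, with the fixed factor $M+Vg$, finishes the proof. The indispensable quantitative ingredient throughout is the uniform bound $Vg_n\leq M$, without which the approximation error in the $\varepsilon/3$ split could not be controlled uniformly in $n$.
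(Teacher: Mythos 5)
This theorem is not proved in the paper at all: it is imported verbatim as Theorem~22 of \cite{talviladenjoy}, with the proof living in that reference, so the only comparison possible is with the literature rather than with an internal argument. Judged on its own merits, your proof is correct, and it is the natural self-contained one: the Helly--Bray theorem transplanted to $\Rbar$. Integration by parts \eqref{generalparts} turns $\intinf fg_n$ into $F(\infty)g_n(\infty)-\intinf F\,dg_n$; the boundary term converges because convergence on $\Rbar$ includes the point $\infty$; and the Stieltjes integrals succumb to your $\varepsilon/3$ split. The technical points all hold up. The bound $\abs{\intinf u\,dg}\leq\norm{u}_\infty Vg$ is legitimate for the Henstock--Stieltjes integral because it holds for every approximating tagged sum. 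The breakpoint difficulty you identify is the real crux, and your fix is the standard one: each of the countably many functions $g_n$ and $g$, being of bounded variation, has at most countably many discontinuities, so suitable breakpoints can be drawn from the dense complement of a countable set, and the endpoints $\pm\infty$ are harmless because every $g\in\bv$ is continuous there by the definition $g(\pm\infty)=\lim_{x\to\pm\infty}g(x)$. The one step you should write out in a full version is the evaluation $\intinf s\,dg_n=\sum_i c_i\left[g_n(x_i)-g_n(x_{i-1})\right]$ at such breakpoints: the tag terms at the $x_i$ have the form $s(x_i)\left[g_n(v)-g_n(u)\right]$ with $u\leq x_i\leq v$ forced close to $x_i$ by the gauge, and they vanish in the limit precisely because $g_n$ is continuous at $x_i$ --- this is exactly where your choice of breakpoints is consumed. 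Your side remark is also correct and worth keeping: \eqref{holder} cannot give the theorem directly, since $V(g_n-g)$ need not tend to $0$ under pointwise convergence (take $g_n=\chi_{(1/n,2/n)}$ and $g=0$, where $Vg_n\equiv 2$), which is why the uniform variation bound appears as a hypothesis rather than as something to be derived.
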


If $f\in\alexc$ and $g\in\bv$ then the convolution
$f\ast g(x)=\intinf f(x-\xi)g(\xi)\,d\xi$ is well-defined on $\R$.  The
convolution is continuous and $\norm{f\ast g}_\infty\leq
\norm{f}(\norm{g}_\infty +Vg)$.
Properties of the convolution are
proven for the continuous primitive integral in
\cite{talvilaconv}.  It is shown there by a limiting process that
$f\ast g$ also exists for $g\in L^1$ and that $\norm{f\ast g}\leq
\norm{f}\norm{g}_1$.

Three facts about the heat kernel:
\begin{align}
&\norm{\Theta_t}_1=\norm{\Theta_t}=1, \text{ for } t>0 \label{heatnorm}\\
&\Theta_a\ast\Theta_b=\Theta_{a+b}, \text{ provided } 1/a+1/b>0;\label{heatconvolution}\\
&\Theta_a\Theta_b=\frac{\Theta_{ab/(a+b)}}{2\sqrt{\pi}\,\abs{a+b}^{1/2}},
\text{ provided } a\not=0,\ b\not=0,\ a+b\not=0.\label{heatproduct}
\end{align}

\section{Initial data in the Alexiewicz space}\label{sectionalex}

When $f\in\alexc$ the convolution $u(x,t)=f\ast\Theta_t(x)$ provides
a smooth solution of \eqref{heatpde}.  The initial conditions are taken
on in the Alexiewicz norm \eqref{heatic}. The estimates 
$\norm{f\ast\Theta_t}_\infty\leq\norm{f}/(2\sqrtpt)$ and
$\norm{f\ast\Theta_t}\leq\norm{f}$ are shown to be sharp.

Widder \cite{widderbook} has written solutions of the heat
equation as Stieltjes integrals
$\fastx=\intinf \Theta_t(x -\xi)\,dF(\xi)$ where $F\in\bv$ is the 
primitive
of $f$. 
Positive solutions of the heat equation are necessarily
given by such an integral with $F$ increasing \cite[VIII.3]{widderbook}.  Since $\balexc$ contains primitives
that are not of bounded variation, solutions considered in
Theorem~\ref{theoremheatc} below need not be the difference of two 
positive functions.  When $F$ is not absolutely continuous, solutions
can be singular at each
$x\in\R$ as $t\to 0+$ \cite[XIV.8]{widderbook}.  
This corresponds to distributions $f$ which have
no pointwise values.

\begin{theorem}\label{theoremheatc}
Let $f\in\alexc$. Let the primitive of $f$ be $F\in\balexc$.
\begin{enumerate}
\item[(a)] The integrals 
$f\ast\Theta_t(x)=\Theta_t\ast f(x)=F\ast\Theta_t'(x)
=[F\ast\Theta_t]'(x)$ 
exist for
each $x\in\R$ and $t>0$.  
\item[(b)] $f\ast\Theta_t(x)$ is $C^\infty$ for $(x,t)\in\R\times(0,\infty)$.
\item[(c)] The estimate 
$\norm{f\ast\Theta_t}_\infty\leq\norm{f}/(2\sqrt{\pi t})$
is sharp in the sense that the coefficient of $\norm{f}$ cannot be reduced.
\item[(d)] Define the linear operator $\Phi_t\fn\alexc\to C(\Rbar)$ by
$\Phi_t(f)=f\ast\Theta_t$.  Then $\norm{\Phi_t}=1/(2\sqrtpt)$.
\item[(e)] $\lim_{\abs{x}\to\infty}\fastx=0$.
\item[(f)] For each $t>0$, $f\ast\Theta_t\in\alexc$ and the inequality 
$\norm{f\ast\Theta_t}\leq\norm{f}$ is sharp
in the sense that the coefficient of $\norm{f}$ cannot be reduced.
Define the linear operator $\Psi_t\fn\alexc\to \alexc$ by
$\Psi_t(f)=f\ast\Theta_t$.  Then $\norm{\Psi_t}=1$.
\item[(g)] Let $u(x,t)=f\ast\Theta_t(x)$ then $u$ is a solution of 
\eqref{heatclassic}-\eqref{heatpde} and
\eqref{heatic}.
\item[(h)] For each $t>0$ we have $\intinf \fast=
\intinf f$.
\item[(i)] $f\ast\Theta_t$
need not be in any of the $L^p$ spaces
($1\leq p<\infty$).
\item[(j)] For each $t>0$, $f\ast\theta_t(x)$ is real analytic as a function of 
$x\in\R$.  For each $x\in\R$, $f\ast\Theta_t(x)$ is real analytic as a 
function of $t>0$.
\end{enumerate}
\end{theorem}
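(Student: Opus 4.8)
The plan is to reduce every assertion to the single identity of part~(a): integration by parts moves the derivative off $f$ and onto the kernel, giving
$\fastx=\intinf F(x-\xi)\Theta_t'(\xi)\,d\xi=\intinf F(\xi)\Theta_t'(x-\xi)\,d\xi$,
together with the elementary facts that $\Ttx$ and each of its $x$- and $t$-derivatives lie in $L^1\cap\bv$ with Gaussian decay, that $V\Theta_t=2\Theta_t(0)=1/\sqrtpt$, and that $\intinf\Theta_t'=0$. For (a) I would first note $\Theta_t\in\bv$, so the convolution is defined by the remarks preceding the theorem; the two displayed equalities then follow by integrating by parts (the boundary terms vanish since $F$ is bounded, $\Theta_t(\pm\infty)=0$ and $F(-\infty)=0$) and by differentiating $F\ast\Theta_t$ under the integral, which is legitimate because $F$ is bounded and $\Theta_t'\in L^1$. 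Part (b) is this same differentiation-under-the-integral argument iterated: every mixed partial of $\Ttx$ decays like a Gaussian uniformly for $(x,t)$ in a compact subset of $\R\times(0,\infty)$, which dominates the difference quotients and lets all derivatives pass onto the kernel against the bounded function $F$.

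For the sharp bound (c) I would write $\fastx=\intinf[F(x-\xi)-c]\Theta_t'(\xi)\,d\xi$ for an arbitrary constant $c$, using $\intinf\Theta_t'=0$, and estimate $\abs{\fastx}\le\big(\sup_s\abs{F(s)-c}\big)V\Theta_t$. Choosing $c$ to be the midrange of $F$ makes $\sup_s\abs{F(s)-c}=\tfrac12(\sup F-\inf F)=\tfrac12\norm{f}$, since $\norm{f}=\norm{F}'_\infty=\sup F-\inf F$; with $V\Theta_t=1/\sqrtpt$ this yields $\norm{\fast}_\infty\le\norm{f}/(2\sqrtpt)$. Sharpness comes from testing against an approximate identity: continuous primitives $F_n$ rising from $0$ to $1$ across a shrinking interval at the origin have $\norm{f_n}\to1$ while $f_n\ast\Theta_t(0)\to\Theta_t(0)=1/(2\sqrtpt)$ by dominated convergence (informally $f_n\to\delta$ and $\delta\ast\Theta_t=\Theta_t$). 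Part (d) is then immediate: the bound and its sharpness give $\norm{\Phi_t}=1/(2\sqrtpt)$, and the codomain $C(\Rbar)$ is justified by the continuity from (b) and the limits from (e). For (e) I would use the second form $\fastx=\intinf F(x-\eta)\Theta_t'(\eta)\,d\eta$: as $x\to\pm\infty$, $F(x-\eta)\to F(\pm\infty)$ for each $\eta$, so dominated convergence (dominating by $\norm{F}_\infty\abs{\Theta_t'}$) gives the limit $F(\pm\infty)\intinf\Theta_t'=0$.

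For (f) I would invoke the cited inequality $\norm{f\ast g}\le\norm{f}\norm{g}_1$ with $g=\Theta_t$, so $\fast\in\alexc$ and $\norm{\fast}\le\norm{f}\norm{\Theta_t}_1=\norm{f}$; here sharpness is exact rather than asymptotic, since $f=\Theta_s$ gives $\Theta_s\ast\Theta_t=\Theta_{s+t}$ with $\norm{\Theta_{s+t}}=\norm{\Theta_s}=1$ by \eqref{heatnorm}--\eqref{heatconvolution}, whence $\norm{\Psi_t}=1$. For (h) I would note that (a) exhibits $F\ast\Theta_t$ as the primitive of $\fast$ in $\balexc$ (it is continuous, with $F\ast\Theta_t(-\infty)=0$ and $F\ast\Theta_t(\infty)=F(\infty)$ by dominated convergence), so $\intinf\fast=F(\infty)=\intinf f$. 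Part (g) splits in two: the PDE follows from (b) by differentiating under the integral and using $\partial_t\Theta_t=\partial_{xx}\Theta_t$, so that $u_2=f\ast\partial_t\Theta_t=f\ast\partial_{xx}\Theta_t=u_{11}$; and the initial condition reduces, via the primitive, to $\norm{\fast-f}=\norm{F\ast\Theta_t-F}'_\infty\le2\norm{F\ast\Theta_t-F}_\infty$, which tends to $0$ because $F\in C(\Rbar)$ is bounded and uniformly continuous while $\Theta_t$ is an approximate identity, so $F\ast\Theta_t\to F$ uniformly as $t\to0^+$.

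For the analyticity in (j) I would complexify the kernel. With $t>0$ fixed, $z\mapsto\Theta_t'(z-\xi)$ is entire and, for $z$ in a bounded subset of $\C$, decays like a Gaussian in $\xi$; hence $\intinf F(\xi)\Theta_t'(z-\xi)\,d\xi$ converges locally uniformly and defines a holomorphic function of $z$ whose restriction to $\R$ is $\fast$, giving real-analyticity in $x$. With $x$ fixed, $\Theta_t(x-\xi)$ continues holomorphically to $\{\operatorname{Re}(1/t)>0\}$, a set containing a complex neighbourhood of any $t_0>0$, where $e^{-\xi^2/(4t)}$ still provides Gaussian decay, and the same Morera/locally-uniform-convergence argument gives analyticity in $t$. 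The one genuinely nonroutine step is (i): rather than argue abstractly, I would exhibit a single $f\in\alexc$ whose transform decays too slowly to lie in any $L^p$. Since $\fast$ is bounded and vanishes at $\pm\infty$, failing every $L^p$ ($1\le p<\infty$) forces slow decay, while part~(h) forbids a one-signed tail of size $1/\log\abs{x}$; the example must therefore oscillate, say with envelope $\sim1/\log\abs{x}$ and a $\cos$ factor, so that its tail integral converges conditionally (consistent with (h)) yet $\intinf\abs{\fast}^p=\infty$ for every finite $p$. Verifying that such an $f$ has a continuous primitive and controlling the tail of its convolution with $\Theta_t$ is where I expect the real work of the proof to lie.
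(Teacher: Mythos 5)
Your arguments for parts (a)--(h) and (j) are sound, but part (i) is a genuine gap: you describe a strategy (an oscillating example with envelope $\sim 1/\log\abs{x}$) and then explicitly defer ``the real work'' --- verifying that such an $f$ has a primitive in $\balexc$ and obtaining a lower bound on $\abs{\fast}$ strong enough to force $\intinf\abs{\fastx}^p\,dx=\infty$ for every finite $p$. That deferred work is precisely the content of (i), and it is where the paper spends most of its effort. The paper takes $f(x)=\sum(-1)^n a_n\Theta_s(x-b_n)$ with $a_n=1/\log(n+1)$ and widely spaced centres $b_n=2n^2\sqrt{s+t}$, proves $f\in\alexc$ by summation by parts together with the asymptotics of the complementary error function (the alternating signs are essential, exactly as your heuristic from (h) predicts), and then shows that on the interval $\abs{x-b_m}<2\sqrt{s+t}$ the convolution is bounded below by a fixed multiple of $a_m$, by splitting off the $n=m$ term and controlling the head and tail of the alternating series separately; since $\sum a_m^p=\infty$ for every $p$, divergence follows. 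Without this (or an equivalent) construction and its verification, (i) remains unproven: your intuition about oscillation and logarithmic decay is correct but is not yet a proof.

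The rest of your proposal is correct, and in several places it takes a genuinely different route from the paper. For (c) the paper uses the second mean value theorem to write $\fastx=(2\sqrtpt)^{-1}\int_{x_1}^{x_2}f$, whereas you subtract the midrange constant from $F$ and use $\intinf\Theta_t'=0$ together with $\norm{f}=\sup F-\inf F$; both give the same constant, and your sharpness example (a delta sequence of continuous primitives) is interchangeable with the paper's choice $f=\Theta_s$, $s\to0^+$. For (h) your argument is slicker than the paper's: identifying $F\ast\Theta_t$ as the primitive in $\balexc$ of $\fast$ and evaluating its limits at $\pm\infty$ by dominated convergence replaces the paper's Fubini computation and appeal to Hake's theorem, and it simultaneously yields $\fast\in\alexc$ in (f). For (g) you prove the norm convergence of the initial data directly, reducing it to $\norm{u_t-f}\le 2\norm{F\ast\Theta_t-F}_\infty$ and using uniform continuity of $F$ on $\R$ plus the approximate identity property of $\Theta_t$; the paper instead cites continuity in the Alexiewicz norm from its convolution paper, so your route is more self-contained. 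Likewise in (j), your complexification of the kernel (Morera plus locally uniform Gaussian domination, in $z$ for fixed $t$ and in $t$ on the half-plane $\operatorname{Re}(t)>0$ for fixed $x$) replaces the paper's citation of growth-condition theorems from Cannon's book. One small point: in (a), the commutativity $f\ast\Theta_t=\Theta_t\ast f$, where both sides are integrals in $\alexc$, requires the change-of-variables theorem for the continuous primitive integral (the paper cites it); your reduction to classical integrals of the bounded continuous $F$ against $\Theta_t'$ handles everything else, but you should say a word about that step.
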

\begin{proof}
(a) The multipliers for $\alexc$ are the functions of bounded variation.
See \cite{talviladenjoy}.  For each $x\in\R$ and $t>0$ we have
$V_{\xi\in\R}\Theta_t(x-\xi)=1/\sqrt{\pi t}$ so $f\ast\Theta_t(x)$ 
exists for each
$x\in\R$ and $t>0$.  The convolution is commutative since we can
change variables, $\xi\mapsto x-\xi$ \cite[Theorem~11]{talviladenjoy}.
Integration by parts \eqref{parts} establishes the other equalities, except the last
which follows by Taylor's theorem.

(b) By \cite[Theorem~4.1]{talvilaconv} we can differentiate under the
integral sign and this shows $f\ast\Theta$ is $C^\infty$ in
$\R\times(0,\infty)$.

(c) The heat kernel $\xi\mapsto\Theta_t(x-\xi)$ is monotonic on the intervals 
$(-\infty,x]$ and $[x,\infty)$.
By the second mean value theorem for integrals 
\cite[Theorem~26]{talviladenjoy} there are $x_1\leq x\leq x_2$ such that
\begin{eqnarray*}
\fastx    & = &   \Theta_t(\infty)\int_{-\infty}^{x_1}f+
\Theta_t(0)\int_{x_1}^xf+
\Theta_t(0)\int_x^{x_2}f+
\Theta_t(-\infty)\int_{x_2}^{\infty}f\\
 & = &  \frac{1}{2\sqrtpt}\int_{x_1}^{x_2}f.
\end{eqnarray*}
It now follows that $\norm{f\ast\Theta_t}_\infty\leq\norm{f}/(2\sqrtpt)$.
Let $s>0$ and $f=\Theta_s$.  Then $\norm{f}=1$ and
$\norm{\Theta_s\ast\Theta_t}_\infty
=\norm{\Theta_{s+t}}_\infty=1/[2\sqrt{\pi(s+t)}]$.  Letting
$s\to 0$ shows the estimate is sharp.

(d) The operator norm is given by 
$\norm{\Phi_t}=\sup_{\norm{f}=1}\norm{f\ast\Theta_t}_\infty\leq 
1/(2\sqrt{\pi t})$,
by (c).  The example in (c) shows $\norm{\Phi_t}=1/(2\sqrtpt)$.

(e) By \cite[Theorem~2.1]{talvilaconv}, $\lim_{|x|\to \infty}\fastx=
\Theta_t(\infty)\intinf f=0$.

(f) The inequality $\norm{f\ast g}\leq\norm{f}\norm{g}_1$ is proven
for $f\in\alexc$ and $g\in L^1$ in \cite[Theorem~3.4(a)]{talvilaconv}.
This gives
$\norm{f\ast\Theta_t}\leq\norm{f}\norm{\Theta_t}_1=\norm{f}$.
And,
$\norm{\Psi_t}=\sup_{\norm{f}=1}\norm{f\ast\Theta_t}\leq 1$.
We get equality by taking $f=\Theta_s$ for $s>0$.  Then
$\norm{\Psi}\geq \norm{\Theta_s\ast\Theta_t}=\norm{\Theta_{s+t}}_1=1$.

(g) By (b) we can differentiate under the integral sign.  Since
the heat kernel is a solution of the heat equation in $\R\times(0,\infty)$
so is $f\ast\Theta$.
Continuity in the Alexiewicz norm gives \eqref{heatic}.  See
\cite[Theorem~3.4(e)]{talvilaconv}.

(h) Let $-\infty<\alpha<\beta<\infty$.
Using the Fubini theorem in the Appendix to \cite{talvilaconv}
and integrating by parts, we have
\begin{eqnarray*}
\int_\alpha^\beta \fastx\,dx & = & 
\intinf f(\xi)\int_\alpha^\beta\Theta_t(x-\xi)
\,dx\,d\xi\\
 & = & \frac{1}{2\sqrt{\pi t}}
\intinf f(\xi)\int_{\alpha-\xi}^{\beta-\xi}e^{-x^2/(4t)}\,dx\,d\xi\\
 & = & \frac{1}{2\sqrt{\pi t}}\intinf F(\xi)\left[e^{-(\beta-\xi)^2/(4t)} - e^{-(\alpha-\xi)^2/(4t)}
\right]\,d\xi\\
 & = & \frac{1}{2\sqrt{\pi t}}\intinf \left[F(\beta-\xi)
-
F(\alpha-\xi)\right]e^{-\xi^2/(4t)}\,d\xi.
\end{eqnarray*}
Dominated convergence now allows us to take the limits $\alpha\to-\infty$
and $\beta\to\infty$ under the integral to get $\intinf\fast=\intinf f$.
Notice that there are no improper integrals so the $\alpha$ and $\beta$
limits give existence of the continuous primitive integral.  See Hake's theorem
\cite[Theorem~25]{talviladenjoy}.

(i)  Let $s>0$, $a_n=1/\log(n+1)$ and $b_n=2n^2\sqrt{s+t}$.
Define
$f(x)=\sum (-1)^n a_n\Theta_s(x-b_n)$.  If we have $\abs{x}\leq A$ for a real
number $A$ then $2\sqrt{\pi s}\,a_n\Theta_s(x-b_n)\leq \exp(-(b_n-A)^2/(4s))$
for large enough $n$.  The Weierstrass
$M$-test then shows the series defining $f$ converges uniformly on compact
sets and $f\in C(\R)$.  Let $F(x)=\int_{-\infty}^xf$.  To prove
$f\in\alexc$ show $F\in\balexc$.  Clearly, $F(-\infty)=\lim_{x\to-\infty}
F(x)=0$.
And, $F(\infty)=\sum_{n=1}^\infty(-1)^na_n$.
Since $f$ is continuous, we just need to show $\lim_{x\to\infty}F(x)=
F(\infty)$.  Note that
\begin{eqnarray*}
\abs{F(x)-F(\infty)} & = & \left|\sum_{n=1}^\infty(-1)^na_n\int_{x}^\infty
\Theta_s(y-b_n)\,dy\right|\\
 & = & \frac{1}{2}\left|\sum_{n=1}^\infty(-1)^na_n{\rm erfc}
\left(\frac{x-b_n}{2\sqrt{s}}\right)\right|,
\end{eqnarray*}
where the complementary error function is ${\rm erfc}(x)=\frac{2}{\sqrt{\pi}}
\int_x^\infty e^{-y^2}\,dy$.  Suppose $b_m<x\leq b_{m+1}$ for some $m\geq 2$.
If $n\leq m-1$
then $x-b_n>b_m-b_{m-1}=2\sqrt{s+t}\,(2m-1)\to\infty$ as $m\to\infty$.  And,
\begin{equation}
\left|\sum_{n=1}^{m-1}(-1)^na_n{\rm erfc}
\left(\frac{x-b_n}{2\sqrt{s}}\right)
\right|\leq \sum_{n=1}^{m-1}a_n\,{\rm erfc}
\left(\frac{\sqrt{s+t}\,(2m-1)}{\sqrt{s}}\right).\label{erfc1}
\end{equation}
The complementary error function has asymptotic behaviour
\cite[8.254]{gradshteyn}
${\rm erfc}(x)\sim \exp(-x^2)/(\sqrt{\pi}\,x)$ as $x\to\infty$.
Since $\abs{\sum_1^{m-1}a_n}\leq m/\log(2)$ we see that the term in
\eqref{erfc1} has limit $0$ as $m\to\infty$.
And, 
summation by parts and telescoping give
\begin{align*}
&\left|\sum_{n=m}^\infty(-1)^na_n{\rm erfc}
\left(\frac{x-b_n}{2\sqrt{s}}\right)
\right|\\
&\quad  =   \left|\sum_{n=m}^\infty
\left(\sum_{k=m}^n(-1)^ka_k\right)\left[{\rm erfc}
\left(\frac{x-b_{n+1}}{2\sqrt{s}}\right)
-{\rm erfc}\left(\frac{x-b_n}{2\sqrt{s}}\right)\right]
\right|\\
&\quad \leq  2\sup_{N\geq m}\left|\sum_{k=m}^N(-1)^ka_k\right|,
\end{align*}
since $0\leq {\rm erfc}(x)\leq 2$.  This term has limit $0$ as $m\to\infty$.
Hence,
$f\in\alexc$.

The Fubini--Tonelli theorem
shows we can interchange series and integral to get $f\ast\Theta_t(x)
=\sum(-1)^na_n\Theta_{s+t}(x-b_n)$.  Let $m>1$ and suppose $\abs{x-b_m}\leq
2\sqrt{s+t}$ then $\Theta_{s+t}(x-b_m)\geq 1/[2e\sqrt{\pi(s+t)}]$.
Note that for each $n\geq 1$ 
we have $b_{n+1}-b_n=2\sqrt{s+t}\,(2n+1)>2\sqrt{s+t}$.
The sequence $\Theta_{s+t}(x-b_n)$ is increasing for $b_n\leq x$.  Since
$x\geq b_m-2\sqrt{s+t}$, this sequence is increasing for $n^2\leq m^2-1$
and hence for $n<m$.  It is decreasing for
$n>m$.
The series
alternates and $a_n\Theta_{s+t}(x-b_n)$ is decreasing, so
\begin{eqnarray*}
\left|\sum_{n=m+1}^\infty (-1)^na_n\Theta_{s+t}(x-b_n)\right| & \leq & 
a_{m+1}\Theta_{s+t}(b_{m+1}-(b_m+2\sqrt{s+t}))\\
 & \leq & \frac{a_m}{2e^{4m^2}\sqrt{\pi(s+t)}}.
\end{eqnarray*}
Let $\norm{a}=\sup_{N\geq 1}\abs{\sum_{n=1}^N(-1)^na_n}$.
Using summation by parts and telescoping, we get 
\begin{align*}
&\left|\sum_{n=1}^{m-1} (-1)^na_n\Theta_{s+t}(x-b_n)\right|  \leq  
\left|\left(\sum_{n=1}^{m-1}(-1)^na_n\right)\Theta_{s+t}(x-b_{m-1})\right|\\
& \qquad\qquad\qquad  +\left|\sum_{n=1}^{m-2}\left(\sum_{k=1}^n (-1)^ka_k\right)\left[\Theta_{s+t}(x-b_{n+1})
-\Theta_{s+t}(x-b_n)\right]\right|\\
&\leq 2\norm{a}\Theta_{s+t}(b_m-2\sqrt{s+t}-b_{m-1})\\
&=\frac{2\norm{a}\exp(-4(m-1)^2)}{2\sqrt{\pi(s+t)}}
\leq \frac{a_m}{2e^2\sqrt{\pi(s+t)}}
\end{align*}
provided the inequality
\begin{equation}
4(m-1)^2\geq \log\left[2\norm{a}e^2\log(m+1)\right]\label{mlog}
\end{equation}
is satisfied.  Clearly we can take $M\geq 2$ large enough so that \eqref{mlog} holds for each $m\geq M$.  Then
\begin{align*}
&\intinf\abs{\fastx}^p\,dx  \geq  \sum_{m=M}^\infty\int_{\abs{x-b_m}<2\sqrt{s+t}}\abs{\fastx}^p\,dx\\
& \geq  \sum_{m=M}^\infty\int_{\abs{x-b_m}<2\sqrt{s+t}}\left|\frac{a_m}{2e\sqrt{\pi(s+t)}}
-\frac{a_m}{2e^{4m^2}\sqrt{\pi(s+t)}}-\frac{a_m}{2e^2\sqrt{\pi(s+t)}}\right|^p\,dx\\
& >  4\sqrt{s+t}\left[\frac{e-2}{2e^2\sqrt{\pi(s+t)}}\right]^p\sum_{m=M}^\infty a_m^p=\infty.
\end{align*}
(j) Since $F$ is bounded it satisfies the growth condition
$\abs{F(x)}\leq C_1\exp(C_2\abs{x}^{1+\alpha})$ ($C_1>0$, $C_2\in \R$,
$0\leq\alpha< 1$) which ensures $F\ast\Theta_t(x)$ is real analytic,
separately in $x\in\R$ and $t>0$ 
\cite[Theorem~10.2.1]{cannon}, \cite[Theorem~10.3.1]{cannon}.  
But then $(F\ast\Theta_t)'=f\ast\Theta_t$
is also analytic.
\end{proof}

\begin{remark}[Theorem~\ref{theoremheatc}]\label{remarktheoremheatc}
Part (a) shows
that $f\ast\Theta_t$ can be written as an improper Riemann integral.

Many results continue to hold in the space 
${\mathcal A}_{buc}:=\{f=F'\mid F\in{\mathcal B}_{buc}\}$
where ${\mathcal B}_{buc}$ are the functions in $C(\R)$ that
are  bounded and
uniformly
continuous.  The
multipliers for ${\mathcal A}_{buc}$ are the functions of bounded variation
that have limit $0$ at $\pm\infty$, which includes the heat kernel.
The space ${\mathcal A}_{buc}$ is a subspace of the weighted spaces
studied in Section~\ref{sectionweighted} below.  See 
Example~\ref{examplesalexc}(d).

When $f\in L^p$
for some $1\leq p\leq \infty$
the uniform estimate from the H\"older inequality is 
$$
\norm{f\ast\Theta_t}_\infty\leq
c_p\norm{f}_pt^{-1/(2p)}, \quad c_p=\left\{\!\!\!\begin{array}{cl}
1/(2\sqrt{\pi}), & p=1\\
(4\pi)^{-1/(2p)}[(p-1)/p]^{(p-1)/(2p)}, & 1<p<\infty\\
1, & p=\infty.
\end{array}
\right.
$$

For example, \cite{gustafson}.  The condition for equality in the H\"older inequality
\cite[p.~46]{liebloss}
shows these estimates are sharp.  When $1<p<\infty$ we can take $f=\Theta_t^{q/p}$.
When $p=\infty$ we can take $f=1$.  When $p=1$ the condition is 
$\Theta_t(x)=d\,{\rm sgn}[f(x)]$ for some $d\in\R$.  This cannot be satisfied with any
$f\in L^1$.  Instead,  we
take for a delta sequence.
Note that when $f\in L^1$ we get
the same estimate as for $f\in\alexc$ (Theorem~\ref{theoremheatc}(c)), with the Alexiewicz
norm replaced with the $L^1$ norm.

Part (h) of Theorem~\ref{theoremheatc} is given for $f\in L^1$ in \cite[p.~220]{epstein}.

Note that since $\fast$ is continuous the integral $\intinf \fast$
exists as a Henstock--Kurzweil integral and as an improper Riemann
integral but from (i) $\intinf\abs{\fastx}\,dx$ can diverge.

Theorem~10.2.1 in \cite{cannon} shows $f\ast\Theta_t(x)$ is separately
analytic in
$x\in\C$ and in a domain with bounded imaginary part of $t$.
\end{remark}

We have 
continuity with respect to initial
conditions.
\begin{corollary}\label{corollaryctsic}
Suppose $f,g\in\alexc$. 
\begin{enumerate}
\item[(a)] Then $\norm{f\ast\Theta_t-g\ast\Theta_t}
\leq \norm{f-g}$ for each $t>0$.
\item[(b)] Let $\epsilon>0$.  Suppose $u, v:\R\times(0,\infty)\to\R$ such that
$\norm{u(\cdot, t)-f}\to 0$ and $\norm{v(\cdot,t)-g}\to 0$ as
$t\to 0^+$. If $\norm{f-g}<\epsilon$ then for small enough $t$ we have
$\norm{u(\cdot, t)-v(\cdot,t)}<2\epsilon$.
\end{enumerate}
\end{corollary}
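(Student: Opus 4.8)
The plan is to obtain both parts directly from results already in hand, with no new machinery. For part (a) I would first record that convolution with $\Theta_t$ is linear in its first argument, so that $f\ast\Theta_t-g\ast\Theta_t=(f-g)\ast\Theta_t$. This is immediate from the definition $(f-g)\ast\Theta_t(x)=\intinf(f-g)(x-\xi)\Theta_t(\xi)\,d\xi$, together with the linearity of the continuous primitive integral in its integrand and the fact that $\Theta_t\in\bv$ is a valid multiplier for every element of $\alexc$, so each of the three convolutions involved is well defined. Since $\alexc$ is a linear space, $f-g\in\alexc$, and I would then apply the inequality $\norm{h\ast\Theta_t}\leq\norm{h}$ from Theorem~\ref{theoremheatc}(f) with $h=f-g$ to conclude $\norm{f\ast\Theta_t-g\ast\Theta_t}=\norm{(f-g)\ast\Theta_t}\leq\norm{f-g}$. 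Equivalently, this is just the assertion that the linear operator $\Psi_t$ of Theorem~\ref{theoremheatc}(f) satisfies $\norm{\Psi_t(f)-\Psi_t(g)}=\norm{\Psi_t(f-g)}\leq\norm{\Psi_t}\,\norm{f-g}=\norm{f-g}$.

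For part (b) the approach is a three-term triangle inequality in the Alexiewicz norm. I would write
\[
\norm{u(\cdot,t)-v(\cdot,t)}\leq\norm{u(\cdot,t)-f}+\norm{f-g}+\norm{g-v(\cdot,t)}.
\]
The middle term is at most $\epsilon$ by hypothesis. Using $\norm{u(\cdot,t)-f}\to0$ and $\norm{v(\cdot,t)-g}\to0$ as $t\to0^+$, I would choose $t$ small enough that each of the two outer terms is less than $\epsilon/2$; summing then gives $\norm{u(\cdot,t)-v(\cdot,t)}<\epsilon/2+\epsilon+\epsilon/2=2\epsilon$, as required.

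Neither part presents a genuine obstacle: the content of (a) is carried entirely by the contraction estimate of Theorem~\ref{theoremheatc}(f), and (b) is just the triangle inequality combined with the hypothesized norm convergence. The only point warranting a line of care is the linearity used in (a), justified above. I would also note that part (b) does not actually use the heat flow at all; it relies only on the stated convergence $\norm{u(\cdot,t)-f}\to0$ and $\norm{v(\cdot,t)-g}\to0$, so it is really a general continuity statement in the Alexiewicz norm, with (a) and Theorem~\ref{theoremheatc}(g) supplying the motivating instance $u=f\ast\Theta_t$, $v=g\ast\Theta_t$.
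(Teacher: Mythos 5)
Your proof is correct and follows essentially the same route as the paper, which simply notes that (a) follows from Theorem~\ref{theoremheatc}(f) (via the linearity you spell out, giving $\norm{(f-g)\ast\Theta_t}\leq\norm{f-g}$) and that (b) is a consequence of the triangle inequality. Your write-up just makes explicit the details the paper leaves to the reader.
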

\begin{proof}
Part (a) follows from (f) and part (b) is a consequence of the triangle
inequality.
\end{proof}
Note that in (b) $u$ and $v$ need not be solutions of the heat equation
and need not be convolutions of the heat kernel with $f$ or $g$.
See \cite{gustafson} for corresponding results in $L^p$ spaces.

\begin{remark}[The regulated primitive integral and $L^p$ primitive integral]\label{remarkrpi}
A related integral is obtained by taking left continuous regulated functions as primitives.
A function is regulated on $\R$ if it has left and right limits at each point of $\R$ and it is
left continuous on $\Rbar$ if it equals its left limit at each point of
$(-\infty,\infty]$ and it has a limit at $-\infty$.  If $F$ is such a function
then the regulated primitive integral of $F'$ is $\intinf F'=F(\infty)$.
There are four types of integral on finite
intervals: $\int_{[a,b]}f=F(b+)-F(a-)=F(b+)-F(a)$,
$\int_{(a,b]}f=F(b+)-F(a+)$,
$\int_{[a,b)}f=F(b-)-F(a-)=F(b)-F(a)$,
$\int_{(a,b)}f=F(b-)-F(a+)=F(b)-F(a+)$.  The space of integrable distributions is a Banach
space under the Alexiewicz norm.  This integral contains the continuous primitive integral.
As well, all finite signed Borel measures are integrable.  For example, the Dirac 
measure is the distributional derivative of the Heaviside step function $H=\chi_{(0,\infty]}$.
The regulated primitive integral is discussed in \cite{talvilaregulated}.
While many of the properties proven in Theorem~\ref{theoremheatc} continue to hold, a key
difference between the two integrals is that, 
for the regulated primitive integral, translation is not
continuous in the Alexiewicz norm.  A consequence of this is that
$f\ast\Theta_t$ need not converge to $f$ as $t\to 0^+$.  
For example, $\delta\ast\Theta_t(x)=\Theta_t(x)$ and
$\norm{\delta\ast\Theta_t-\delta}
\geq|\int_{(0,\infty)}(\Theta_t-\delta)|
=1/2\not\to 0$
as $t\to 0^+$.  For this reason, we do not consider the regulated primitive integral in
this paper.  In Section~\ref{sectionhigher} we define higher order Alexiewicz spaces which
contain some linear combinations of translated Dirac distributions.
In all these spaces
we get convolutions with the initial data converging to the initial data 
in the appropriate norm.

We do have continuity in the $L^p$ norms for $1\leq p<\infty$.  Distributions that are the
distributional derivative of an $L^p$ function are considered in \cite{talvilaLp}.  When
these are convolved with the heat kernel they give solutions that take on initial conditions
in spaces of distributions that are isometrically isomorphic to $L^p$.
\end{remark}

When $f$ is locally a continuous function, $\fastx$ converges pointwise
to $f(x)$ as $t\to 0^+$.
\begin{theorem}\label{theoremfcts}
Suppose $f\in\alexc$ with primitive $F\in\balexc$ such that ${F|_I}\in
C^1(I)$  for some open interval $I\subset\R$.  Define
$u\fn\R\times[0,\infty)\to\R$ by
$$
u(x,t)=\left\{\begin{array}{cl}
\fastx, & (x,t)\in\R\times(0,\infty)\\
f(x), & (x,t)\in I\times\{0\}\\
0, & (x,t)\in (\R\setminus I)\times\{0\}.
\end{array}
\right.
$$
Then $u$ is continuous on $[\R\times(0,\infty)]\cup[I\times\{0\}]$.
\end{theorem}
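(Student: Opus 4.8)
The continuity on the open region $\R\times(0,\infty)$ is immediate: Theorem~\ref{theoremheatc}(b) shows $\fast$ is $C^\infty$ there. So the whole content is continuity at a boundary point $(x_0,0)$ with $x_0\in I$, where the value $u(x_0,0)=f(x_0)$ is well defined precisely because $F|_I\in C^1(I)$ forces $f=F'$ to be an ordinary continuous function on $I$. The plan is to localise near $x_0$, splitting $f$ into a genuine continuous function (to which the classical approximate-identity theory applies) plus a remainder distribution that vanishes near $x_0$, whose convolution I can control directly.

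First I would fix a cut-off $\phi\in\D$ with $\supp\phi\subset I$ and $\phi\equiv 1$ on an open interval $N=(x_0-2\delta,x_0+2\delta)\subset I$, and write $f=g+h$ with $g=\phi f$ and $h=(1-\phi)f$. Since $f$ agrees with the continuous function $F'$ on $I\supset\supp\phi$, the distribution $g$ is in fact the continuous, compactly supported (hence bounded and uniformly continuous) function $\phi F'$, with $g(x_0)=f(x_0)$. The fact recalled in Section~\ref{sectionintroduction} for bounded continuous data then gives that $g\ast\Theta_t(x)$ is continuous on $\R\times[0,\infty)$ with boundary value $g$, so $g\ast\Theta_t(x)\to g(x_0)=f(x_0)$ as $(x,t)\to(x_0,0^+)$.

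It remains to show $h\ast\Theta_t(x)\to 0$ as $(x,t)\to(x_0,0^+)$. Here $h=f-g\in\alexc$, and since $g=f$ on $N$, the distribution $h$ vanishes on $N$; hence its primitive $H\in\balexc$ is constant on $N$, say $H\equiv c$. Integration by parts \eqref{parts} with the absolutely continuous multiplier $\xi\mapsto\Theta_t(x-\xi)$, followed by the substitution $\eta=x-\xi$, gives $h\ast\Theta_t(x)=\intinf H(x-\eta)\Theta_t'(\eta)\,d\eta$. For $\abs{x-x_0}<\delta$ the range $\abs{\eta}<\delta$ keeps $x-\eta$ inside $N$, so it contributes $c\int_{-\delta}^{\delta}\Theta_t'=0$ because $\Theta_t$ is even; only $\abs{\eta}\geq\delta$ survives, and there $\abs{h\ast\Theta_t(x)}\leq\norm{H}_\infty\int_{\abs{\eta}\geq\delta}\abs{\Theta_t'(\eta)}\,d\eta$. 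A direct evaluation of $\int_\delta^\infty\eta\,e^{-\eta^2/(4t)}\,d\eta=2t\,e^{-\delta^2/(4t)}$ shows the tail integral equals $(\pi t)^{-1/2}e^{-\delta^2/(4t)}$, so $\abs{h\ast\Theta_t(x)}\leq\norm{H}_\infty(\pi t)^{-1/2}e^{-\delta^2/(4t)}\to 0$ uniformly for $\abs{x-x_0}<\delta$ as $t\to 0^+$. Adding the two pieces gives $\fastx\to f(x_0)$; combined with $\abs{u(x,0)-f(x_0)}=\abs{f(x)-f(x_0)}$ from the continuity of $f$ on $I$ (covering approach along $t=0$), this yields continuity of $u$ at $(x_0,0)$.

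The main obstacle is the remainder $h\ast\Theta_t$: one cannot argue pointwise with $h$, so the proof hinges on recognising that its primitive $H$ is locally constant near $x_0$ and on exploiting the exact cancellation $\int_{-\delta}^{\delta}\Theta_t'=0$ against the rapid decay of the away-from-origin mass of $\Theta_t'$. The cut-off step also uses $F\in C^1(I)$ essentially, since that is exactly what promotes $g=\phi f$ from a distribution to a bona fide bounded continuous function.
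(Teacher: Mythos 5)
Your proof is correct, and it is organized genuinely differently from the paper's. The paper never decomposes $f$: it fixes $\epsilon,\delta$ with $(x_0-2\delta,x_0+2\delta)\subset I$ and $\abs{f(\xi)-f(x_0)}<\epsilon$ there, and splits the single integral $\fastx-f(x_0)$ by domain into three pieces --- the near piece (bounded by $\epsilon$ since $\Theta_t$ has unit integral), the far piece of $f$ (bounded via the H\"older inequality \eqref{holder} by $\norm{f}$ times the variation of the truncated kernel $\xi\mapsto\Theta_t(x-\xi)\chi_{\{\abs{\xi-x_0}>2\delta\}}$, which is $O(\Theta_t(\delta))$ uniformly for $\abs{x-x_0}<\delta$), and the far piece of the constant $f(x_0)$ (a Gaussian tail). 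You instead decompose the data, $f=\phi f+(1-\phi)f$ with a smooth cutoff, reduce the first piece to the classical bounded-continuous approximate-identity theorem quoted in Section~\ref{sectionintroduction}, and handle the remainder $h$ through its primitive $H$, which is locally constant near $x_0$; your integration by parts via \eqref{parts}, the evenness cancellation $\int_{-\delta}^{\delta}\Theta_t'=0$, and the tail value $\int_{\abs{\eta}\geq\delta}\abs{\Theta_t'}=2\Theta_t(\delta)=(\pi t)^{-1/2}e^{-\delta^2/(4t)}$ are all correct. The two arguments are close cousins --- your $H$-estimate plays exactly the role of the paper's far piece $I_2$, and \eqref{holder} is itself an integration-by-parts bound, so both tails end up of size $O(\norm{\cdot}\,\Theta_t(\delta))$ --- but yours is more modular: it outsources the $\epsilon$-argument to the classical theorem instead of redoing it, and isolates the distributional difficulty entirely in $h$. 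The price is a few foundational checks the paper's direct estimate never needs and you pass over quickly: that $\phi f$ is well defined and equals the function $\phi F'$ (true, since $\supp\phi\subset I$ and $f=F'$ is continuous there), that $g=\phi f\in L^1\subset\alexc$ so $h=f-g\in\alexc$ with primitive in $\balexc$, and that the $\alexc$-convolution $g\ast\Theta_t$ coincides with the classical Lebesgue one (Example~\ref{examplesalexc}(a)). Both routes give $\fastx\to f(x_0)$ uniformly for $\abs{x-x_0}<\delta$ as $t\to 0^+$, which combined with continuity of $f$ along $I\times\{0\}$ yields the claimed continuity.
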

\begin{proof}
Continuity of $u$ on $\R\times(0,\infty)$ is proved in 
Theorem~\ref{theoremheatc}(b).
Let $x_0\in I$.
Given $\epsilon>0$ there exists $\delta>0$ such that $(x_0-2\delta,
x_0+2\delta)\subset I$ and if
$\abs{x-x_0}<2\delta$ then $\abs{f(x)-f(x_0)}<\epsilon$.  For
$(x,t)\in(x_0-\delta, x_0+\delta)\times(0,\infty)$ we have
\begin{eqnarray*}
\abs{\fastx-f(x_0)} & = &  \left|\intinf f(\xi)\Theta_t(x-\xi)\,d\xi
-f(x_0)\intinf\Theta_t(x-\xi)\,d\xi\right|\\
 & \leq &  I_1+I_2+I_3,
\end{eqnarray*}
where
\begin{eqnarray*}
I_1 & = & \left|\int_{x_0-2\delta}^{x_0+2\delta}[f(\xi)-f(x_0)]\Theta_t(x-\xi)
\,d\xi\right|\leq\epsilon\intinf\Theta_t(x-\xi)\,d\xi=\epsilon,\\
I_2 & = &
\left|\int_{\abs{\xi-x_0}>2\delta}f(\xi)
\Theta_t(x-\xi)\,d\xi\right| \\
 & \leq & \norm{f}\left\{
V_{\xi\in\R}[\Theta_t(x-\xi)\chi_{(x_0+2\delta,\infty)}(\xi)]+
V_{\xi\in\R}[\Theta_t(x-\xi)\chi_{(-\infty,x_0-2\delta)}(\xi)]\right\}\\
 & \leq & 2\norm{f}\left[\Theta_t(\delta)+\Theta_t(-\delta)\right]\\
I_3 & = & \abs{f(x_0)}\int_{\abs{\xi-x_0}>2\delta}\Theta_t(x-\xi)\,d\xi\\
 & \leq  & \frac{\abs{f(x_0)}}{\sqrt{\pi}}\left(\int_{-\infty}^{-\delta/
(2\sqrt{t})}e^{-s^2}\,ds+\int_{\delta/
(2\sqrt{t})}^\infty e^{-s^2}\,ds\right).
\end{eqnarray*}
The estimate for $I_2$ comes from the H\"older inequality \eqref{holder}.
Letting $t\to 0^+$ shows $I_2\to 0$ and $I_3\to 0$.
\end{proof}

\begin{examples}\label{examplesalexc}
(a) $L^1$, the spaces of Henstock--Kurzweil integrable functions and
wide Denjoy integrable functions are all (non-closed) subspaces of
$\alexc$.  In each case, $\fastx$ agrees with its traditional value
and its value in $\alexc$.

(b) Let $V\in\balexc$ be a singular function, i.e., the pointwise 
derivative $V'(x)=0$ for almost all $x$.  Then $V'(x)\in L^1$ and the
Lebesgue integral gives $V'\ast\Theta_t(x)=0$ for each $x\in\R$.
This gives the zero solution of the heat equation with zero initial
condition taken on in the $L^1$ norm.
As an integral in $\alexc$ we get $V'\ast\Theta_t(x)=V\ast\Theta'_t(x)
=\intinf V(\xi)\Theta'_t(x-\xi)\,d\xi$.  This last exists as an
improper Riemann integral and gives a nontrivial solution of the
heat equation, initial values being taken on in the Alexiewicz norm.

(c) Suppose $f\in\alexc$ has compact support in the interval $[A,B]$.
Let $x>\max(B,0)$.  
Then $\fastx=1/(2\sqrtpt)\int_A^B f(\xi) \exp(-(\xi-x)^2/(4t))\,d\xi$.
By H\"older's inequality \eqref{holder},
$$
\abs{\fastx}  \leq  \frac{\norm{f}}{2\sqrtpt}V(\Theta_t\chi_{[A,B]})
 =  \frac{\norm{f}}{2\sqrtpt}\left[e^{-(x-A)^2/(4t)}
+e^{-(x-B)^2/(4t)}\right].
$$
Hence, for fixed $t>0$, $\fastx =O(\exp(-(x-B)^2/(4t))$ as $x\to\infty$.

(d) Suppose $f=F'$ where $F$ is bounded and uniformly continuous on $\R$
(but not necessarily on $\Rbar$).
We can define $\norm{f}=\sup_I\abs{\int_If}$, provided the supremum
is taken over bounded intervals $I\subset\R$.  The estimates in
(c) and (f) of Theorem~\ref{theoremheatc} then continue to hold,
even though $f$ need not be in $\alexc$.
For
example, let $f(x)=\sin(sx)$ for $s>0$.  A routine calculation
(also see Example~\ref{exampleweighted}(c) below) gives the
separated solution $u(x,t)=\fastx=\sin(sx)\exp(-s^2t)$.  Note
that $f\not\in\alexc$ but $\norm{f}=(1/s)\int_0^\pi\sin(x)\,dx=2/s$.
And, $\norm{u_t}_\infty=\exp(-s^2t)$.
A calculation gives $2\sqrt{\pi t}\,\norm{u_t}_\infty/\norm{f}
=s\sqrt{\pi t}\exp(-s^2t)\leq \sqrt{\pi/(2e)}<1$, in
accordance with (c) of Theorem~\ref{theoremheatc}.  And,
$\norm{u_t}=\exp(-s^2t)\norm{f}\leq\norm{f}$, in
accordance with (f) of Theorem~\ref{theoremheatc}. 
As well, $\norm{u_t-f}=(1-\exp(-s^2t))\norm{f}\to 0$ as $t\to 0^+$.

(e) Let $f(x)=\exp(ix^2/(4s))$ for $s>0$.  Then $f$ is bounded and
continuous but not in the $L^p$ spaces ($1\leq p<\infty$).  And,
$f$ is improper Riemann integrable so
$f\in\alexc$.  Completing the square gives
\begin{eqnarray*}
\fastx & = & \frac{1}{2\sqrtpt}\intinf e^{-(\xi-x)^2/(4t)} e^{i\xi^2/(4s)}d\xi\\
 & = & \frac{e^{\frac{-x^2}{4t}} e^{\frac{sx^2(s+it)}{4t(s^2+t^2)}}}{2\sqrtpt}
\intinf e^{\frac{-(s-it)}{4st}\left[\xi-\frac{s(s+it)x}{s^2+t^2}\right]^2}
d\xi.
\end{eqnarray*}
Due to Cauchy's theorem, the contour can be shifted in the complex plane
from $\xi$ real to ${\mathcal Im}(\xi) =stx/(s^2+t^2)$.  The integral
$\intinf e^{-a\xi^2}\,d\xi=\sqrt{\pi/a}$ for ${\mathcal Re}(a)>0$ then
gives
\begin{equation}
\fastx =\frac{e^{-\frac{tx^2}{4(s^2+t^2)}} e^{\frac{isx^2}{4(s^2+t^2)}}
\sqrt{s}\sqrt{s+it}}{\sqrt{s^2+t^2}}.\label{example(d)}
\end{equation}
It is readily seen that $\lim_{t\to 0^+}\fastx=f(x)$ at each point
$x\in\R$ (cf. Theorem~\ref{theoremfcts}).  
Differentiating $f$ and $f\ast\Theta_t$ with respect to $s$ gives an
example for which the initial data is not in $\alexc$.  See Example~\ref{exampleweighted}(e)
below.  

Note that 
for fixed $s,t>0$ there is a positive constant $c$ such that
$\fastx =O(e^{-cx^2})$ as $\abs{x}\to \infty$.  The
next example shows that $\fastx$ can tend to $0$ more slowly
(cf. Theorem~\ref{theoremheatc}(e)).

(f)  Suppose a decay rate $\omega\fn(0,\infty)\to(0,\infty)$ is given
and $\lim_{x\to\infty}\omega(x)=0$.
We will show there is $f\in L^p$ for each $1\leq p\leq\infty$
such that $\fastx\not=o(\omega(x))$
as $x\to\infty$.  We can assume $\omega$ is decreasing; otherwise
replace $\omega$ with $\tilde{\omega}(x)=\sup_{t>x}\omega(t)$.
Let $f(x)=\sum_{n=1}^\infty n^{-2}\Theta_s(x-b_n)$ where $s>0$
and
$\{b_n\}$ is an increasing sequence with limit $\infty$.  
Then $f\in L^p$.  And,
$f\ast\Theta_t(b_m)\geq m^{-2}\Theta_{s+t}(0)=[2m^2\sqrt{\pi(s+t)}]^{-1}$.
Let $b_n=\omega^{-1}(n^{-2})$.  Then
$b_n\uparrow\infty$.  And, 
$$
\frac{f\ast\Theta_t(b_m)}{\omega(b_m)}\geq \frac{1}{2\sqrt{\pi(s+t)}}
\not\to 0\quad\text{ as } m\to\infty.
$$

\end{examples}

We say $f\in\alexc$ has an absolutely convergent integral if $f$ has a 
primitive $F\in\bv\cap\balexc$.
\begin{prop}\label{propabsolutelyconv}
\begin{enumerate}
\item[(a)] Let $f\in\alexc$ with primitive $F\in\bv\cap\balexc$.  Then
$V(f\ast\Theta_t)\leq V\!F\, V\Theta_t=V\!F/\sqrt{\pi t}$.
\item[(b)] Let  $f\in\alexc$ with compact support in $[A,B]$.  Then
$$
V(f\ast\Theta_t)\leq
\frac{\norm{f}}{\sqrt{\pi}}\left[\frac{1}{\sqrt{t}}
+\frac{(B-A)\sqrt{2}}{\sqrt{e}\,t}\right].
$$
\item[(c)]  If $f\in L^1$ then
$V(f\ast\Theta_t)\leq\norm{f}_1/\sqrt{\pi t}$.
\item[(d)] Let $f\in\alexc$ and let $u(x,t)=u_t(x)=\fastx$.
For
$t>0$ the pointwise
derivatives have estimates
$\norm{u'_t}\leq\norm{f}V\Theta_t=\norm{f}/\sqrt{\pi t}$,
$\norm{\partial u(\cdot, t)/\partial t}=
\norm{u''_t}\leq\norm{f}V\Theta_t'=\norm{f}\sqrt{2}/(\sqrt{\pi e}\,t)$,
$\norm{u'_t}_\infty\leq\norm{f}V\Theta'_t=\norm{f}\sqrt{2}/(\sqrt{\pi e}\,t)$,
$\norm{\partial u(\cdot, t)/\partial t}_\infty
=\norm{u''_t}_\infty\leq\norm{f}V\Theta_t''=\norm{f}(1+4e^{-3/2})/(2\sqrt{\pi}\,
t^{3/2})$.
\end{enumerate}
\end{prop}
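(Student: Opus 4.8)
The common engine for all four parts is that $u_t:=\fast$ is $C^\infty$ on $\R\times(0,\infty)$ and vanishes at $\pm\infty$ (Theorem~\ref{theoremheatc}(b),(e)), together with the identity $(\fast)^{(k)}=f\ast\Theta_t^{(k)}$ obtained by differentiating under the integral sign exactly as in Theorem~\ref{theoremheatc}(b). Because $u_t$ is then $C^1$ and tends to $0$ at infinity, its variation is $V(\fast)=\intinf\abs{f\ast\Theta_t'(x)}\,dx$ whether this is finite or not, so every variation bound reduces to an $L^1$ estimate on $x\mapsto f\ast\Theta_t'(x)$. I would also record, by elementary calculus locating the zeros of $\Theta_t'$, $\Theta_t''$ and $\Theta_t'''$ (which sit at $\pm\sqrt{2t}$ and $0,\pm\sqrt{6t}$ respectively), the values $\norm{\Theta_t'}_1=V\Theta_t=1/\sqrtpt$, $\norm{\Theta_t''}_1=V\Theta_t'=\sqrt2/(\sqrt{\pi e}\,t)$ and $V\Theta_t''=(1+4e^{-3/2})/(2\sqrt\pi\,t^{3/2})$; since each kernel is smooth and vanishes at $\pm\infty$, its $L^1$ norm equals the variation of its antiderivative.

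For part (a), where $F\in\bv$, I would pass to the Stieltjes form $f\ast\Theta_t'(x)=\intinf\Theta_t'(x-\xi)\,dF(\xi)$, legitimate since $dF$ is a finite signed measure, and estimate
\[
V(\fast)=\intinf\abs{\intinf\Theta_t'(x-\xi)\,dF(\xi)}\,dx
\le\intinf\!\!\intinf\abs{\Theta_t'(x-\xi)}\,\abs{dF}(\xi)\,dx
=\norm{\Theta_t'}_1\,V\!F,
\]
the interchange being Tonelli's theorem for a nonnegative integrand. This yields $V(\fast)\le V\!F/\sqrtpt$. Part (c) is the special case $f\in L^1$: then the primitive is absolutely continuous with $V\!F=\norm{f}_1$, so (a) gives $V(\fast)\le\norm{f}_1/\sqrtpt$ at once.

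For part (b), where $F$ need not have bounded variation, I would bound $\abs{f\ast\Theta_t'(x)}$ pointwise. Since $f$ is supported in $[A,B]$, integrating by parts over $[A,B]$ as in the derivation of \eqref{holder}, with the multiplier $\xi\mapsto\Theta_t'(x-\xi)$ and using $\int_{-\infty}^A f=0$, gives $\abs{f\ast\Theta_t'(x)}\le\norm{f}\bigl(\abs{\Theta_t'(x-B)}+V_{\xi\in[A,B]}\Theta_t'(x-\xi)\bigr)$. Integrating in $x$, the boundary term contributes $\norm{f}\norm{\Theta_t'}_1=\norm{f}V\Theta_t$, while writing $V_{\xi\in[A,B]}\Theta_t'(x-\xi)=\int_A^B\abs{\Theta_t''(x-\xi)}\,d\xi$ and applying Tonelli gives $\norm{f}(B-A)\norm{\Theta_t''}_1=\norm{f}(B-A)V\Theta_t'$. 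Summing and inserting the constants of the first paragraph reproduces the stated bound.

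Part (d) follows from the two convolution inequalities already available. For the Alexiewicz estimates I would use $\norm{f\ast g}\le\norm{f}\norm{g}_1$ (\cite[Theorem~3.4(a)]{talvilaconv}, as in Theorem~\ref{theoremheatc}(f)) with $g=\Theta_t'$ and $g=\Theta_t''$, giving $\norm{u_t'}\le\norm{f}\norm{\Theta_t'}_1=\norm{f}V\Theta_t$ and $\norm{u_t''}\le\norm{f}\norm{\Theta_t''}_1=\norm{f}V\Theta_t'$; the heat equation $u_2=u_{11}$ (Theorem~\ref{theoremheatc}(g)) identifies $\partial u/\partial t$ with $u_t''$. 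For the sup-norm estimates I would apply \eqref{holder} to $f\ast g(x)=\intinf f(\xi)g(x-\xi)\,d\xi$: since $\Theta_t'$ and $\Theta_t''$ vanish at $\pm\infty$ the infimum term drops, leaving $\norm{f\ast g}_\infty\le\norm{f}Vg$, hence $\norm{u_t'}_\infty\le\norm{f}V\Theta_t'$ and $\norm{u_t''}_\infty\le\norm{f}V\Theta_t''$. The only genuine work is the bookkeeping: justifying each Tonelli interchange (safe, as every integrand is nonnegative after taking absolute values) and verifying the three explicit variation constants. I expect the main obstacle to be the pointwise integration-by-parts estimate in (b), where the boundary contribution at $\xi=B$ must be tracked and shown to integrate to exactly $\norm{f}V\Theta_t$ so that the two pieces assemble into the claimed closed form.
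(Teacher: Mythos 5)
Your proof is correct and follows essentially the same route as the paper: integration by parts plus Fubini--Tonelli for (a) and (b), the specialization $V\!F=\norm{f}_1$ for (c), and the estimates $\norm{f\ast g}\le\norm{f}\norm{g}_1$ and \eqref{holder} together with the explicit variation constants $V\Theta_t'$, $V\Theta_t''$ for (d). The only cosmetic difference is in (a), where you reduce to $V(u_t)=\intinf\abs{u_t'}$ and use the Stieltjes form $\intinf\Theta_t'(x-\xi)\,dF(\xi)$, while the paper bounds the variation sum $\sum_i\abs{u_t(x_i)-u_t(y_i)}$ directly by interchanging the sum with the integral.
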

\begin{proof}
(a) Let $(x_i,y_i)$ be a sequence of disjoint intervals.
Integrating by parts
and using the Fubini--Tonelli theorem
\begin{eqnarray*}
\sum_i\left|f\ast\Theta_t(x_i)-f\ast\Theta_t(y_i)\right| & = & 
\sum_i
\left|\intinf\Theta_t'(\xi)\left[F(x_i-\xi)-F(y_i-\xi)\right]\,d\xi\right|\\
 & \leq & \intinf\abs{\Theta_t'(\xi)}\sum_i 
\left|F(x_i-\xi)-F(y_i-\xi)\right|\,d\xi\\
 & \leq & V\Theta_t\,V\!F.
\end{eqnarray*}
Taking the supremum over all such intervals gives the first result.
(b) Let the primitive of $f$ be $F$.  Then $F=0$ on $(-\infty,A]$ and
$F=F(B)$ on $[B,\infty)$. Integrate by parts and use the Fubini--Tonelli theorem in \cite{talvilaconv} to get
\begin{eqnarray*}
V(f\ast\Theta_t) & = & \intinf\abs{(f\ast\Theta_t)'(x)}\,dx\\
 & = & \intinf\left|F(B)\Theta_t'(x-B)+\int_A^B F(\xi)\Theta_t''(x-\xi)\,d\xi\right| dx\\
 & \leq & \abs{F(B)}/(\sqrt{\pi t})+ (B-A)\norm{F}_\infty\,V\Theta_t'\\
 & \leq & \frac{\norm{f}}{\sqrt{\pi}}\left[\frac{1}{\sqrt{t}}
+\frac{(B-A)\sqrt{2}}{\sqrt{e}\,t}\right].
\end{eqnarray*}
(c) This follows since if $f\in L^1$ then it has a primitive
that is absolutely continuous and of bounded variation.
(d) The heat kernel is smooth so
we can differentiate pointwise \cite[Corollary~4.5]{talvilaconv} to get 
$\partial u(x,t)/\partial x=u'_t(x)=f\ast\Theta'_t(x)$,
$\partial^2 u(x,t)/\partial x^2=u''_t(x)=\partial u(x,t)/\partial t=
f\ast\Theta''_t(x)$.  A calculation shows
$V\Theta_t'=4\Theta_t'(-\sqrt{2t})=\sqrt{2}/(\sqrt{\pi e}\,t)$ and 
$V\Theta_t''=4\Theta_t''(\sqrt{6t})-2\Theta_t''(0)=
(1+4e^{-3/2})/(2\sqrt{\pi}
\,t^{3/2})$.  The result then follows from the estimates
$\norm{f\ast g}\leq \norm{f}\norm{g}_1$ (\cite[Theorem~3.4]{talvilaconv})
and the H\"older inequality \eqref{holder}.
\end{proof}

\section{Higher order Alexiewicz spaces}\label{sectionhigher}

For each $n\in\N$ the set of distributions
that are the $n$th derivative of a function in $\balexc$ is a Banach
space isometrically isomorphic to $\balexc$.  A distributional integral
is defined, the multipliers being functions that are $n$-fold iterated
integrals of functions of bounded variation.  Properties analogous to Theorem~\ref{theoremheatc}
hold for these higher order distributions.  First we briefly introduce the
higher order distributional integrals.  More details can be found in \cite{talvilaacrn}.
Our presentation is simplified since we take primitives to be continuous rather
than regulated functions, c.f. Remark~\ref{remarkrpi}.

If $F,G\in\balexc$ and satisfy the distributional differential equation $F^{(n)}=G^{(n)}$ then
$F$ and $G$ differ by a polynomial of degree at most $n-1$.  But the only polynomial in $\balexc$
is the zero function so $F=G$.  This shows that according to the following definition the 
primitive of a distribution in $\acn$ is unique.

\begin{defn}\label{defnacnorm}
For each $n\in\N$ let
$\acn=\{f\in\Dp\mid f=F^{(n)} \text{ for some } F\in \balexc\}$, with $\acany{1}:=\alexc$.
If $f\in\acn$ define $\acnorm{f}=\norm{F'}=\sup_{x<y}\abs{F(x)-F(y)}$ where $F\in\balexc$
is the unique primitive of $f$.
\end{defn}

It is clear that $\acnorm{\cdot}$ is a norm on $\acn$ and this makes $\acn$ into a 
Banach space
that is isometrically isomorphic to $\balexc$.

\begin{defn}
Let $\nbv$ be the functions in $\bv$ that are right continuous on $\Rbar$, i.e.,
$g\in\bv$ and $\lim_{y\to x^+}g(y)=g(x)$ for each $x\in(-\infty,\infty)$.
Define $\Ibv^0=\nbv$.  Suppose $\Ibv^{n-1}$ is known for $n\in\N$.
Define $\Ibvn=\{h\fn\R\to\R\mid
h(x)=\int_0^xq(t)\,dt \text{ for some } q\in\Ibv^{n-1}\}$.
\end{defn}
This inductive definition defines the multipliers for integration in $\acn$.  Each function
in $h\in\Ibvn$ is an $n$-fold iterated integral of a unique function $g\in\nbv$.  We write this
as $h=I^{n-1}[g]$.
See \cite{talvilaacrn}, Proposition~2.5 and the
remark following.  Note that $h^{(k)}(0)=0$ for $0\leq k\leq n-2$.  Now we can define
the integral of the product of a distribution in $\acn$ and a function in 
$\Ibv^{n-1}$.

\begin{defn}\label{defnintacn}
Let $n\in\N$.  For $f\in\acn$ let $F$ be its primitive in $\balexc$.
For $h\in\Ibv^{n-1}$ such that $h=I^{n-1}[g]$ for $g\in\nbv$, define
the continuous primitive integral of $f$ with respect to $h$ as
\begin{eqnarray}
\intinf fh & = & \intinf F^{(n)}h=(-1)^{n-1}\intinf F'h^{(n-1)}
\label{intdefn1}\\
 & = & (-1)^{n-1}F(\infty)g(\infty)-(-1)^{n-1}\intinf F(x)\,dg(x).
\label{intdefn2}
\end{eqnarray}
\end{defn}
Note that $F'\in\alexc$ and $h^{(n-1)}\in\bv$ so that the last integral in \eqref{intdefn1}
is integration in $\alexc$ as in Section~\ref{sectiondistributionalintegrals}.  It is shown
in \cite[Proposition~2.10]{talvilaacrn} that if $P$ is a polynomial of degree at most $n-2$
then $\intinf fP=0$.  Hence, the iterated integrals defining $h$ can have any point $a\in\Rbar$
as their lower limit and $h^{(k)}$ would vanish at $a$ for $0\leq k\leq n-2$ and not necessarily at $0$.

The H\"older inequality now takes the following form.
\begin{prop}\label{holderacn}
Let $f\in\acn$ with primitive $F\in\balexc$.  Let $h\in\Ibv^{n-1}$ such that
$h=I^{n-1}[g]$ for $g\in\nbv$.  Then 
$$
\left|\intinf fh\right|\leq \left|\intinf F'\right|\inf\abs{g}+
\norm{F'} Vg\leq\acnorm{f}[\inf\abs{h^{(n-1)}}+Vh^{(n-1)}].
$$
\end{prop}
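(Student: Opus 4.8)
The plan is to reduce everything to the first-order H\"older inequality \eqref{holder}, already established for $\alexc$ and $\bv$. By Definition~\ref{defnintacn} the integral unwinds to $\intinf fh=(-1)^{n-1}\intinf F'h^{(n-1)}$, where $F\in\balexc$ is the primitive of $f$. Since $h=I^{n-1}[g]$ is the $(n-1)$-fold iterated integral of $g\in\nbv$, differentiating $n-1$ times recovers $h^{(n-1)}=g$, so $h^{(n-1)}\in\nbv\subset\bv$, while $F'\in\alexc$ has primitive $F$ and satisfies $\norm{F'}=\acnorm{f}$. Thus $\intinf fh$ is, up to the sign $(-1)^{n-1}$, exactly an integral of an $\alexc$ distribution against a $\bv$ multiplier, and the $n=1$ case is literally \eqref{holder}.

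First I would take absolute values, which removes the factor $(-1)^{n-1}$, and apply the middle bound of \eqref{holder} directly to $F'$ and $g$. This yields $\abs{\intinf fh}=\abs{\intinf F'g}\leq\abs{\intinf F'}\inf\abs{g}+\norm{F'}Vg$, which, recalling $g=h^{(n-1)}$, is precisely the first claimed inequality. I would deliberately use the middle form of \eqref{holder} rather than its rightmost form, because the target bound involves $\inf\abs{h^{(n-1)}}=\inf\abs{g}$ and not $\abs{g(\infty)}$.

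For the second inequality I would replace $\abs{\intinf F'}$ by $\norm{F'}=\acnorm{f}$. The key observation is that $\intinf F'=F(\infty)-F(-\infty)=F(\infty)$, while the norm is $\norm{F'}=\sup_{x<y}\abs{F(x)-F(y)}$; letting $x\to-\infty$ and $y\to\infty$ gives $\abs{F(\infty)}\leq\norm{F'}$, hence $\abs{\intinf F'}\leq\acnorm{f}$. Combining, $\abs{\intinf F'}\inf\abs{g}+\norm{F'}Vg\leq\acnorm{f}[\inf\abs{g}+Vg]$, and substituting $\inf\abs{g}=\inf\abs{h^{(n-1)}}$ and $Vg=Vh^{(n-1)}$ produces the right-hand bound.

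I do not anticipate a serious obstacle: the entire argument is a reduction, via Definition~\ref{defnintacn}, to the already-proven inequality \eqref{holder}. The only points requiring care are the bookkeeping identity $h^{(n-1)}=g$ (so that $\inf\abs{h^{(n-1)}}=\inf\abs{g}$ and $Vh^{(n-1)}=Vg$), and the elementary estimate $\abs{\intinf F'}\leq\norm{F'}$ read off from the definition of the Alexiewicz norm. The degenerate case $n=1$, where $\acany{1}=\alexc$ and $h=g$, collapses to \eqref{holder} itself and serves as a consistency check.
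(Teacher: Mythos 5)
Your proof is correct and is exactly the argument the paper intends: the paper states Proposition~\ref{holderacn} without proof, treating it as immediate from Definition~\ref{defnintacn} (which reduces $\intinf fh$ to $(-1)^{n-1}\intinf F'h^{(n-1)}$ with $h^{(n-1)}=g\in\bv$ and $F'\in\alexc$) together with the first-order inequality \eqref{holder}. Your two additional observations --- using the middle form of \eqref{holder} so that $\inf\abs{g}$ rather than $\abs{g(\infty)}$ appears, and the estimate $\abs{\intinf F'}=\abs{F(\infty)}\leq\sup_{x<y}\abs{F(x)-F(y)}=\norm{F'}=\acnorm{f}$ --- are precisely the bookkeeping needed to pass from the stated inequality to the right-hand bound, so nothing is missing.
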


Every derivative of the heat kernel is a function of bounded variation vanishing at $\pm\infty$.
Hence, if $f\in\acn$ then $f\ast\Theta_t$ exists.  This leads to similar results as in
Theorem~\ref{theoremheatc}.
\begin{lemma}\label{lemmakernelvariation}
Let $n\in\N$.  Then $\Theta^{(n)}_t(x)=(-2\sqrt{t})^{-n}
\Theta_t(x)H_n(x/(2\sqrt{t}))$ and
$
V\Theta^{(n-1)}_t=\norm{\Theta^{(n)}_t}_1\leq c_nt^{-n/2}$
where
$$
c_n=\frac{1}{2^n\sqrt{\pi}}
\intinf e^{-x^2}\abs{H_n(x)}\,dx\leq k\sqrt{n!}\,2^{(1-n)/2}
$$
and $H_n$ is the $n$th order Hermite polynomial.
It is known
that $k<1.087$.
\end{lemma}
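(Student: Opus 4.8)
The plan is to prove the four assertions in order: the closed form for $\Theta_t^{(n)}$, the identity $V\Theta_t^{(n-1)}=\norm{\Theta_t^{(n)}}_1$, the evaluation of this norm as $c_nt^{-n/2}$, and finally the bound on $c_n$. For the closed form I would start from the Rodrigues formula for the physicists' Hermite polynomials, $\frac{d^n}{dy^n}e^{-y^2}=(-1)^nH_n(y)e^{-y^2}$. Writing $\Theta_t(x)=(4\pi t)^{-1/2}e^{-y^2}$ with $y=x/(2\sqrt t)$ and applying $d/dx=(2\sqrt t)^{-1}d/dy$ exactly $n$ times gives $\Theta_t^{(n)}(x)=(4\pi t)^{-1/2}(2\sqrt t)^{-n}(-1)^nH_n(y)e^{-y^2}=(-2\sqrt t)^{-n}\Theta_t(x)H_n(x/(2\sqrt t))$. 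This is a routine chain-rule computation and needs no more than induction on $n$.

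For the variation identity, the heat kernel is $C^\infty$ and every $\Theta_t^{(k)}$ is a polynomial times a Gaussian, so $\Theta_t^{(n-1)}$ is absolutely continuous on $\R$, has derivative $\Theta_t^{(n)}\in L^1(\R)$, and tends to $0$ at $\pm\infty$. For such a function the total variation equals the $L^1$ norm of the derivative, whence $V\Theta_t^{(n-1)}=\intinf\abs{\Theta_t^{(n)}}=\norm{\Theta_t^{(n)}}_1$. To evaluate this norm I would substitute the closed form and change variables $y=x/(2\sqrt t)$; since $dx=2\sqrt t\,dy$ and $(4\pi t)^{-1/2}\cdot2\sqrt t=\pi^{-1/2}$, the $t$-dependence collapses to $t^{-n/2}$ and one is left with $\norm{\Theta_t^{(n)}}_1=\frac{t^{-n/2}}{2^n\sqrt\pi}\intinf e^{-y^2}\abs{H_n(y)}\,dy=c_nt^{-n/2}$. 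Note that this step is in fact an equality, so the only genuine inequality in the lemma is the bound on $c_n$; as a sanity check, $n=1$ recovers $V\Theta_t=c_1t^{-1/2}=1/\sqrt{\pi t}$, in agreement with part (a) of Theorem~\ref{theoremheatc}.

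The substantive step, which I expect to be the main obstacle, is the bound on $c_n$. Here I would apply the Cauchy--Schwarz inequality with the splitting $e^{-x^2}=e^{-x^2/2}\cdot e^{-x^2/2}$, using $\intinf e^{-x^2}\,dx=\sqrt\pi$ together with the Hermite orthogonality relation $\intinf e^{-x^2}H_n(x)^2\,dx=2^nn!\sqrt\pi$, to get $\intinf e^{-x^2}\abs{H_n(x)}\,dx\le\bigl(\sqrt\pi\bigr)^{1/2}\bigl(2^nn!\sqrt\pi\bigr)^{1/2}=\sqrt\pi\,2^{n/2}\sqrt{n!}$. Dividing by $2^n\sqrt\pi$ gives $c_n\le2^{-n/2}\sqrt{n!}=\frac{1}{\sqrt2}\,2^{(1-n)/2}\sqrt{n!}$, so the claimed inequality holds with $k=1/\sqrt2$, already comfortably below $1.087$. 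The delicate issue is the precise constant: the Cauchy--Schwarz ratio $1/\sqrt2$ is attained only at $n=0$ and, by the Plancherel--Rotach asymptotics of $H_n$, the ratio $c_n/(\sqrt{n!}\,2^{(1-n)/2})$ in fact tends to $0$, so the sharp admissible value of $k$ is known and may simply be quoted from the literature; for the lemma it suffices that the elementary Cauchy--Schwarz estimate already supplies a constant strictly less than $1.087$.
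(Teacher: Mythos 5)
Your proposal is correct, and on the one substantive point it takes a genuinely different route from the paper. The paper's proof is essentially a two-line citation: the closed form comes from the Rodrigues formula (as in your first step), and the bound on $c_n$ comes from Cram\'er's inequality $\abs{H_n(x)}\leq k\,2^{n/2}\sqrt{n!}\,e^{x^2/2}$ with $k<1.087$; inserting this pointwise bound into the integral and using $\intinf e^{-x^2/2}\,dx=\sqrt{2\pi}$ gives exactly $c_n\leq k\sqrt{n!}\,2^{(1-n)/2}$, which explains why that particular constant appears in the statement. You instead use Cauchy--Schwarz together with the orthogonality relation $\intinf e^{-x^2}H_n(x)^2\,dx=2^nn!\sqrt{\pi}$, obtaining $c_n\leq\sqrt{n!}\,2^{-n/2}=\tfrac{1}{\sqrt{2}}\sqrt{n!}\,2^{(1-n)/2}$. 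This is a legitimate and in fact stronger estimate: your effective constant $1/\sqrt{2}\approx 0.707$ beats the Cram\'er-based constant by a factor of $k\sqrt{2}\approx 1.54$, and it is self-contained, resting only on the standard $L^2$ theory of Hermite polynomials rather than on a pointwise inequality one must quote from the literature. Your intermediate observations are also sound and fill in what the paper leaves implicit: $\Theta_t^{(n-1)}$ is absolutely continuous with $L^1$ derivative, so its variation equals $\norm{\Theta_t^{(n)}}_1$; the scaling step is an exact equality $\norm{\Theta_t^{(n)}}_1=c_nt^{-n/2}$, so the only genuine inequality is the bound on $c_n$; and the $n=1$ sanity check $V\Theta_t=1/\sqrt{\pi t}$ matches Theorem~\ref{theoremheatc}(a). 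The closing remark about the sharp value of $k$ is inessential and could be dropped, but it does not affect the argument.
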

\begin{proof}
This follows from the Rodrigues formula for Hermite polynomials 
\cite[8.950.1]{gradshteyn} and Cramer's inequality for Hermite polynomials
\cite[8.954.2]{gradshteyn}.
\end{proof}
\begin{theorem}\label{theoremheatacn}
Let $n\geq 2$
and let $f\in\acn$.  Let the primitive of $f$ be $F\in\balexc$.
\begin{enumerate}
\item[(a)] The integrals 
$f\ast\Theta_t(x)=\Theta_t\ast f(x)=F\ast\Theta_t^{(n)}(x)
=[F\ast\Theta_t]^{(n)}(x)$ 
exist for
each $x\in\R$ and $t>0$.  
\item[(b)] $f\ast\Theta_t(x)$ is $C^\infty$ for $(x,t)\in\R\times(0,\infty)$.
\item[(c)] Let $c_n$ be as in Lemma~\ref{lemmakernelvariation} then
$\norm{f\ast\Theta_t}_\infty\leq c_n\acnorm{f} t^{-n/2}$.
The exponent on $t$ cannot be changed.
\item[(d)] Define the linear operator $\Phi_t\fn\acn\to C(\Rbar)$ by
$\Phi_t(f)=f\ast\Theta_t$.  Then $\norm{\Phi_t}\leq c_n t^{-n/2}$.
\item[(e)] $\lim_{\abs{x}\to\infty}\fastx=0$.
\item[(f)] For each $t>0$, $f\ast\Theta_t\in\acn$ and the inequality 
$\acnorm{f\ast\Theta_t}\leq\acnorm{f}$ is sharp
in the sense that the coefficient of $\acnorm{f}$ cannot be reduced.
Define the linear operator $\Psi_t\fn\acn\to \acn$ by
$\Psi_t(f)=f\ast\Theta_t$.  Then $\norm{\Psi_t}=1$.
For each integer $0\leq k\leq n$, $f\ast\Theta_t\in\alexc^{k}$
and $\norm{f\ast\Theta_t}^{(k)}\leq \norm{F'}c_{n-k+1}t^{(n-k+1)/2}$.
\item[(g)] Let $u(x,t)=f\ast\Theta_t(x)$ then $u$ is a solution of 
\eqref{heatclassic}-\eqref{heatpde} and
$\acnorm{u_t-f}\to 0$ as $t\to0^+$.
\item[(h)] For each $t>0$ we have $\intinf \fast=0$.
\item[(i)] For each $t>0$, $f\ast\theta_t(x)$ is real analytic as a function of 
$x\in\R$.  For each $x\in\R$, $f\ast\Theta_t(x)$ is real analytic as a 
function of $t>0$.
\end{enumerate}
\end{theorem}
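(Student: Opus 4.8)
The plan is to reduce every assertion to the corresponding part of Theorem~\ref{theoremheatc} for the first-order space $\alexc$, by transferring $n-1$ derivatives off $f$ and onto the smooth, integrable kernel. Writing $f=F^{(n)}$ with $F\in\balexc$, so that $F'\in\alexc$ with $\norm{F'}=\acnorm{f}$, I would start from commutativity of the convolution and the integration-by-parts formula \eqref{intdefn1}, applied with the multiplier $\xi\mapsto\Theta_t(x-\xi)$ (whose $(n-1)$st $\xi$-derivative is $(-1)^{n-1}\Theta_t^{(n-1)}(x-\xi)\in\nbv$). Existence of $\fast$ was already granted by the paragraph preceding the theorem, and the same computation yields the key factorisation $\fastx=F'\ast\Theta_t^{(n-1)}(x)=[F\ast\Theta_t]^{(n)}(x)$, establishing (a). Part (b) then follows exactly as in Theorem~\ref{theoremheatc}(b): the heat kernel is $C^\infty$, so one differentiates under the integral sign.

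For the uniform estimate (c), I would apply the H\"older inequality \eqref{holder} to $F'\ast\Theta_t^{(n-1)}$. Since $\Theta_t^{(n-1)}$ vanishes at $\pm\infty$ the boundary term drops, leaving $\abs{\fastx}\le\norm{F'}\,V\Theta_t^{(n-1)}\le c_n\acnorm{f}\,t^{-n/2}$ by Lemma~\ref{lemmakernelvariation}; taking the supremum over $\acnorm{f}=1$ gives (d). To show the exponent $n/2$ cannot be lowered, take $f=\Theta_s^{(n)}$, whose $\balexc$-primitive is $\Theta_s$ so that $\acnorm{f}=\Theta_s(0)$; by \eqref{heatconvolution} one has $\fast=\Theta_{s+t}^{(n)}$, and choosing $s$ proportional to $t$ (optimising $\norm{\Theta_{s+t}^{(n)}}_\infty/\Theta_s(0)$ at $s=t/n$) makes this ratio of exact order $t^{-n/2}$.

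The central claim (f) reduces cleanly. From (a) the unique $\balexc$-primitive of $\fast$ is $F\ast\Theta_t$, which lies in $\balexc$ because $F\ast\Theta_t(x)\to F(\pm\infty)$ as $x\to\pm\infty$ by dominated convergence, so in particular it vanishes at $-\infty$; hence $\fast\in\acn$ and $\acnorm{\fast}=\norm{(F\ast\Theta_t)'}=\norm{F'\ast\Theta_t}\le\norm{F'}=\acnorm{f}$ by Theorem~\ref{theoremheatc}(f). Sharpness and $\norm{\Psi_t}=1$ come from the same family $f=\Theta_s^{(n)}$, now letting $s\to\infty$, since $\acnorm{\Theta_s^{(n)}\ast\Theta_t}/\acnorm{\Theta_s^{(n)}}=\sqrt{s/(s+t)}\to1$. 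The intermediate memberships $\fast\in\acany{k}$ for $0\le k\le n$ follow by transferring only $n-k$ derivatives and estimating $\norm{F'\ast\Theta_t^{(n-k)}}$ through \eqref{holder} together with Lemma~\ref{lemmakernelvariation}.

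The remaining parts are short. For (e), $\fast=F'\ast\Theta_t^{(n-1)}$ with $\Theta_t^{(n-1)}$ vanishing at $\pm\infty$, so the convolution-at-infinity result \cite[Theorem~2.1]{talvilaconv} forces decay to $0$. In (g), $u=\fast$ solves \eqref{heatpde} because $\Theta_t$ does and differentiation under the integral is legitimate by (b), while $\acnorm{u_t-f}=\norm{F'\ast\Theta_t-F'}\to0$ is precisely continuity in the Alexiewicz norm for $F'\in\alexc$, Theorem~\ref{theoremheatc}(g). Part (h) is the genuine novelty for $n\ge2$: since $\fast\in\acn$ and the constant $1$ is a polynomial of degree $0\le n-2$, the pairing $\intinf\fast$ vanishes by \cite[Proposition~2.10]{talvilaacrn} (equivalently, the $\balexc$-primitive $F'\ast\Theta_t^{(n-2)}$ of $\fast$ vanishes at both ends). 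Finally (i) follows from (b) and the growth bound $\abs{F(x)}\le C$, which makes $F\ast\Theta_t$ separately real analytic \cite[Theorem~10.2.1]{cannon}, \cite[Theorem~10.3.1]{cannon}; differentiating $n$ times preserves analyticity. The main obstacle I anticipate is the bookkeeping in (a) and (f): one must verify that all $n$ distributional derivatives may be moved onto the smooth kernel via \eqref{intdefn1} (handling the polynomial ambiguity noted after Definition~\ref{defnintacn}), and that $F\ast\Theta_t$ is genuinely an element of $\balexc$, so that $\fast$ is a bona fide member of $\acn$ with the stated primitive. Once this identification is secure, every quantitative bound is a one-line consequence of Lemma~\ref{lemmakernelvariation}, the H\"older inequality, and the already-proved first-order theorem.
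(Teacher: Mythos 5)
Your proposal is correct and follows essentially the same route as the paper: everything is reduced to Theorem~\ref{theoremheatc} through the factorisation $f\ast\Theta_t=F'\ast\Theta_t^{(n-1)}=(F\ast\Theta_t)^{(n)}$, with $F\ast\Theta_t\in\balexc$ identified as the primitive and Lemma~\ref{lemmakernelvariation} supplying the quantitative bounds. The only local differences are cosmetic and all check out: the paper's extremal example is $f=\Theta_s^{(n-1)}$ (letting $s\to 0^+$ in (c), and giving exact equality $\acnorm{\Theta_{s+t}^{(n-1)}}=1$ in (f)) rather than your $f=\Theta_s^{(n)}$ with $s=t/n$ resp.\ $s\to\infty$, and the paper proves (h) by a Fubini plus dominated-convergence computation rather than your equally valid, slightly slicker appeal to \cite[Proposition~2.10]{talvilaacrn} (with the vanishing of the first-order primitive $F'\ast\Theta_t^{(n-2)}$ at $\pm\infty$ covering the improper-integral interpretation).
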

\begin{proof}
From \eqref{intdefn1} we get $f\ast\Theta_t(x)=F'\ast\Theta_t^{(n-1)}(x)$.  This
then reduces the convolution of $f$ with $\Theta_t$ in $\acn$ to convolution of $F'$ with $\Theta_t^{(n-1)}$
in $\alexc$.  Now using Definition~\ref{defnacnorm}, the corresponding results of 
Theorem~\ref{theoremheatc} apply.  
(a) A linear change of variables theorem
is proved in \cite[Theorem~3.4]{talvilaacrn}.  This shows the convolution commutes.
(c) We have 
$$
\norm{f\ast\Theta_t}_\infty=\norm{F\ast\Theta_t^{(n)}}_\infty =
\sup_{x\in\R}\left|\intinf F(\xi)\Theta_t^{(n)}(x-\xi)\,d\xi\right|\leq\acnorm{f}\norm{\Theta_t^{(n)}}_1.
$$
Now use Lemma~\ref{lemmakernelvariation}.
Let $s>0$ and take $f(x)=\Theta_s^{(n-1)}(x)$.  Then 
$$
f\ast\Theta_t(x)=[\Theta_{s+t}]^{(n-1)}(x) =\frac{(-1)^{n-1}\Theta_{s+t}(x)H_{n-1}(x/(2\sqrt{s+t}))}{2^{n-1}
(s+t)^{(n-1)/2}}.
$$
Let $\alpha_n\in\R$ such that $H_{n-1}(\alpha_n)\not=0$.  Then 
$$
\norm{f\ast\Theta_t}_\infty\geq\abs{f\ast\Theta_t(2\alpha_n\sqrt{s+t})}=
\frac{\exp(-\alpha_n^2)H_{n-1}(\alpha_n)}{2^{n}\sqrt{\pi}\,(s+t)^{n/2}}.
$$ 
Letting $s\to 0$ shows
the exponent on $t$ cannot be changed.
(e)  Using Lemma~\ref{lemmakernelvariation}, dominated
convergence gives
$$
\lim_{x\to\pm\infty}f\ast\Theta_t(x)  =  \lim_{x\to\pm\infty}
\intinf F(x-\xi)\Theta_t^{(n)}(\xi)\,d\xi
  =  F(\mp\infty)\intinf\Theta_t^{(n)}(\xi)\,d\xi=0.
$$
(f) To show the inequality is sharp let $f=\Theta_s^{(n-1)}$.
Note that for each integer $k\leq n$ we have 
$F\ast\Theta_t^{(n-k)}\in C(\Rbar)$.
And, $f\ast\Theta_t=(F\ast\Theta_t^{n-k})^{(k)}\in\alexc^{k}$.
(h) Similar analysis as in the proof of Theorem~\ref{theoremheatc} shows
\begin{eqnarray*}
\int_\alpha^\beta \fastx\,dx & = & 
\intinf F'(\xi)\int_\alpha^\beta\Theta_t^{(n-1)}(x-\xi)
\,dx\,d\xi\\
 & = & \intinf \left[F(\beta-\xi)
-
F(\alpha-\xi)\right]\Theta_t^{(n-1)}(\xi)\,d\xi.
\end{eqnarray*}
Dominated convergence then gives 
$$
\intinf f\ast\Theta_t(x)\,dx  = 
F(\infty)\intinf \Theta_t^{(n-1)}(\xi)\,d\xi
  = 0.
$$
\end{proof}
\begin{remark}[Theorem~\ref{theoremheatacn}\label{remarkacn}]
(c) The sharp constant is not known for $n\geq 2$.
\end{remark}
\begin{examples}\label{exampleacn}
(a) Let $w$ be a bounded continuous function such that the pointwise derivative $w'(x)$ exists for no $x$.
Define $F_1(x)=w(x)\exp(-\abs{x})$ then $F_1\in\balexc$.  For $n\geq 1$, $F_1^{(n)}$ has pointwise values 
nowhere  and yet $F_1^{(n)}\ast\Theta_t$ is well-defined as an integral in $\acn$ and defines a smooth
solution of the heat equation, taking on initial 
values in the norm $\acnorm{\cdot}$, as in Theorem~\ref{theoremheatacn}.
Note that as a Lebesgue integral $F_1^{(n)}\ast\Theta_t$ does not exist.

(b) Neither the Dirac distribution nor any of its derivatives are in $\acn$ 
\cite[Proposition~3.1]{talvilaacrn}.  However, some linear combinations of the Dirac distribution are
in $\acn$.  Define $F_2(x)=0$ for $x\leq 0$, $F_2(x)=x$ for $0\leq x\leq 1$, $F_2(x)=1$ for $x\geq 1$.  Then
$F_2\in\balexc$.  Let $\delta_a=\tau_a\delta$, the Dirac distribution supported at $a\in\R$, with 
$\delta_0=\delta$.  Then $F_2'(x)=H(x)-H(x-1)$ where $H$ is the Heaviside step function.  For $n\geq 2$
we get $F_2^{(n)}=\delta^{(n-2)}-\delta_1^{(n-2)}\in\acn$.  A solution of the heat
equation is then given using Lemma~\ref{lemmakernelvariation}
\begin{eqnarray*}
u(x,t) & = & F_2^{(n)}\ast\Theta_t(x)=\Theta_t^{(n-2)}(x)-\Theta_t^{(n-2)}(x-1)\\
 & = & (-2\sqrt{t})^{-(n-2)}\left[\Theta_t(x)H_{n-2}\left(\frac{x}{2\sqrt{t}}\right)
-\Theta_t(x-1)H_{n-2}\left(\frac{x-1}{2\sqrt{t}}\right)\right].
\end{eqnarray*}
And, $\norm{u_t-F_2^{(n)}}^{(n)}\to 0$ as $t\to 0^+$.

(c) Fix $\alpha >0$.  Let $p_\alpha(x)=x^\alpha$ and $q(x)=\exp(-x)$.  Define $F_3=H p_\alpha q\in\balexc$.
The pointwise derivative satisfies $F_3^{(n)}(x)\sim\alpha(\alpha-1)\cdots(\alpha-n+1)x^{\alpha-n}$ as $x\to 0^+$.
Hence, for each $n\in\N$ we have $F_3^{(n)}\in \acn$ and for $n\geq\alpha+1$ we have 
$F_3^{(n)}\not\in L^1_{loc}$.  
Although $F_3^{(n)}$ need not be locally integrable in the Lebesgue (or Henstock--Kurzweil) sense,
$F_3^{(n)}\ast\Theta_t$ gives a 
smooth
solution of the heat equation, taking on initial 
values in the norm $\acnorm{\cdot}$, as in Theorem~\ref{theoremheatacn}.
\end{examples}

\begin{prop}\label{proputestimatesalexn}
Let $c_n$ be as in Lemma~\ref{lemmakernelvariation}.
\begin{enumerate}
\item[(a)] Let $f\in\acn$ with primitive $F\in\bv\cap\balexc$.  Then
$V(f\ast\Theta_t)\leq V\!F\, \norm{\Theta^{(n)}_t}_1=c_nV\!F\,t^{-n/2}$.
\item[(b)] Let  $f\in\acn$ with compact support in $[A,B]$.  Then
$V(f\ast\Theta_t)\leq\newline\acnorm{f}[c_nt^{-n/2}+(B-A)c_{n+1}\,t^{-(n+1)/2}$.
\item[(c)]  If $f=F^{(n)}$ where $F\in AC\cap\bv$ then
$V(f\ast\Theta_t)\leq c_{n}\norm{F'}_1\,t^{-n/2}$.
\item[(d)] Let $f\in\acn$ and let $u(x,t)=u_t(x)=\fastx$.
For
$t>0$ the pointwise
derivatives have estimates
$\acnorm{u'_t}\leq\acnorm{f}V\Theta_t=\acnorm{f}/\sqrt{\pi t}$,
$\acnorm{\partial u(\cdot, t)/\partial t}=
\acnorm{u''_t}\leq\acnorm{f}V\Theta_t'=\acnorm{f}\sqrt{2}/(\sqrt{\pi e}\,t)$,
$\norm{u'_t}_\infty\leq\acnorm{f}V\Theta^{(n)}_t=c_{n+1}
\acnorm{f}t^{-(n+1)/2}$,
$\norm{\partial u(\cdot, t)/\partial t}_\infty
=
\norm{u''_t}_\infty\leq\newline\acnorm{f}V\Theta^{(n+1)}_t=c_{n+2}\acnorm{f}
t^{-(n+2)/2}$.
\end{enumerate}
\end{prop}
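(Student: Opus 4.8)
The plan is to imitate the proof of Proposition~\ref{propabsolutelyconv} (the case $n=1$), everywhere replacing the kernel $\Theta_t$ by $\Theta_t^{(n)}$ and the variations $V\Theta_t$, $V\Theta_t'$ by the $L^1$ norms $\norm{\Theta_t^{(n)}}_1=c_nt^{-n/2}$ and $\norm{\Theta_t^{(n+1)}}_1=c_{n+1}t^{-(n+1)/2}$ supplied by Lemma~\ref{lemmakernelvariation}. Throughout I would use the representation $f\ast\Theta_t=F\ast\Theta_t^{(n)}=F'\ast\Theta_t^{(n-1)}$ from Theorem~\ref{theoremheatacn}(a), together with the freedom, inside a convolution, to redistribute derivatives between $F$ and $\Theta_t$. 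For (a), given disjoint intervals $(x_i,y_i)$, I would write $f\ast\Theta_t(x_i)-f\ast\Theta_t(y_i)=\intinf\Theta_t^{(n)}(\xi)[F(x_i-\xi)-F(y_i-\xi)]\,d\xi$, move the sum inside the integral by the Fubini--Tonelli theorem of \cite{talvilaconv}, bound $\sum_i\abs{F(x_i-\xi)-F(y_i-\xi)}\le V\!F$, and integrate to get $\norm{\Theta_t^{(n)}}_1\,V\!F=c_nV\!F\,t^{-n/2}$; taking the supremum over all such families gives the claim. Part (c) is then immediate from (a): after subtracting $F(-\infty)$ (which changes neither $F^{(n)}$ nor $V\!F$) we may take $F\in\balexc$, and for $F\in AC\cap\bv$ one has $V\!F=\norm{F'}_1$.

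For (b) the first step is to pin down $F$ outside $[A,B]$. Since $f=F^{(n)}$ vanishes there, $F$ restricts to a polynomial of degree $<n$ on each of $(-\infty,A)$ and $(B,\infty)$, and membership in $\balexc$ (finite limits at $\pm\infty$, with $F(-\infty)=0$) forces $F\equiv 0$ on $(-\infty,A]$ and $F\equiv F(B)$ on $[B,\infty)$. Integrating the constant tail then gives $f\ast\Theta_t(x)=\int_A^BF(\xi)\Theta_t^{(n)}(x-\xi)\,d\xi+F(B)\Theta_t^{(n-1)}(x-B)$. As $f\ast\Theta_t$ is smooth, $V(f\ast\Theta_t)=\intinf\abs{(f\ast\Theta_t)'(x)}\,dx$; differentiating in $x$ and bounding the two resulting terms by $\norm{\Theta_t^{(n+1)}}_1$ and $\norm{\Theta_t^{(n)}}_1$, using $\abs{F(B)}\le\norm{F}_\infty\le\acnorm{f}$, yields the stated estimate.

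For (d) the delicate point is to exhibit the correct $\balexc$-primitive so that each higher order norm is the right object. Writing $u'_t=f\ast\Theta_t'=(F\ast\Theta_t')^{(n)}$ with $F\ast\Theta_t'\in\balexc$ (continuous, and vanishing at $\pm\infty$ by dominated convergence since $\intinf\Theta_t'=0$), I obtain $u'_t\in\acn$ with $\acnorm{u'_t}=\norm{F'\ast\Theta_t'}\le\norm{F'}\norm{\Theta_t'}_1=\acnorm{f}/\sqrtpt$ from the bound $\norm{h\ast g}\le\norm{h}\norm{g}_1$ of \cite{talvilaconv}; the estimate for $\acnorm{u''_t}$ is identical with $\Theta_t''$ in place of $\Theta_t'$. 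For the uniform estimates I instead keep $n$ derivatives on the kernel, $u'_t=F'\ast\Theta_t^{(n)}$ and $u''_t=F'\ast\Theta_t^{(n+1)}$, and apply the H\"older inequality \eqref{holder} in the form $\norm{h\ast g}_\infty\le\norm{h}\,Vg$ (valid because $\Theta_t^{(n)}(\infty)=0$ for $n\ge 1$), giving $c_{n+1}\acnorm{f}t^{-(n+1)/2}$ and $c_{n+2}\acnorm{f}t^{-(n+2)/2}$ respectively.

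I expect the main difficulties to be bookkeeping rather than conceptual: the polynomial-growth argument that fixes $F$ off $[A,B]$ in (b), and in (d) the twin tasks of verifying that $F\ast\Theta_t'$ (resp.\ $F\ast\Theta_t''$) lies in $\balexc$, so that $\acnorm{u'_t}$ is indeed the quantity being estimated, and of justifying the redistribution of derivatives inside each convolution before applying the $L^1$ or H\"older bound.
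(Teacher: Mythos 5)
Your proposal is correct and takes essentially the same approach as the paper, whose entire proof of this proposition reads ``Using Theorem~\ref{theoremheatacn}, the proof is similar to the proof of Proposition~\ref{propabsolutelyconv}.'' You have simply carried out that adaptation in full detail---redistributing derivatives between $F$ and the kernel, invoking Lemma~\ref{lemmakernelvariation} for the $L^1$ bounds, and supplying the bookkeeping the paper leaves implicit (the polynomial-tail argument fixing $F$ off $[A,B]$ in (b), and the identification of $F\ast\Theta_t'$ as the $\balexc$-primitive of $u_t'$ in (d))---so nothing needs correction.
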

\begin{proof}
Using Theorem~\ref{theoremheatacn}, the proof is similar to the proof of
Proposition~\ref{propabsolutelyconv}.
\end{proof}

\section{Weighted spaces}\label{sectionweighted}

The minimal condition for existence of the convolution
 $\fastx$ is that the integral
$\intinf f(x)e^{-x^2/(4\tau)}\,dx$ exist for some $\tau>0$.  Then
$\fastx$ exists for $0<t<\tau$.

Let $G\in\balexc$.  Let $\tau>0$ and define $\omega_\tau(x)=\exp(-x^2/(4\tau))$.
Let $F_\tau$ be the unique continuous solution of the
Volterra integral equation
\begin{equation}
G(x)-G(0)=F_\tau(x)\omega_\tau(x)+\frac{1}{2\tau}\int_0^x F_\tau(\xi)\xi 
\omega_\tau(\xi)\,d\xi.\label{integraleqn}
\end{equation}
Necessarily, $F_\tau(0)=0$.
Since the kernel is continuous there is a unique
solution for $F_\tau\omega_\tau$ and hence for $F_\tau$.
Uniqueness is proven in \cite{yosidaode}.
If $G_i\in\balexc$ and 
$$
G_i(x)-G_i(0)=F_\tau(x)\omega_\tau(x)+\frac{1}{2\tau}\int_0^x F_\tau(\xi)\xi 
\omega_\tau(\xi)\,d\xi
$$
for $i=1,2$ then $G_1(x)-G_2(x)=G_1(0)-G_2(0)$ so $G_1-G_2\in\balexc$ but
is constant so $G_1=G_2$.  Hence, there is a one-to-one correspondence between
$F_\tau$ and $G$ in \eqref{integraleqn}.  We then have the following definitions
for a 
family of 
weighted Alexiewicz spaces.
\begin{defn}\label{defnalext}
Let $\tau>0$.  Define $\omega_\tau\fn\R\to\R$ by $\omega_\tau(x)=e^{-x^2/(4\tau)}$.
Let $\balexct=\{F_\tau\in C(\R)\mid F_\tau \text{ is a solution of } \eqref{integraleqn}
\text{ for some } G\in\balexc\}$.  For $F_\tau\in\balexct$ define 
$\norm{F_\tau}_{\tau,\infty}=\norm{G}'_\infty$ where
$G$ is the unique function in \eqref{integraleqn}.  Define
$
\alexct=\{f\in\Dp\mid f=F_\tau' \text{ for some } F_\tau\in\balexct\}$.
For $f\in\alexct$ define $\actnorm{f}=\norm{F_\tau}_{\tau,\infty}$ where 
$F_\tau$ is
the unique primitive of $f$.
\end{defn}
For the definition to be meaningful we still need to show distributions in
$\alexct$ have a unique primitive in $\balexct$.  Suppose $f\in\alexct$ and
$f=F_1'=F_2'$ for $F_1, F_2\in\balexct$.  As before, $F_1-F_2=c=\text{constant}$.
There are unique $G_i\in\balexc$ such that 
$$
G_i(x)-G_i(0)=F_i(x)\omega_\tau(x)+\frac{1}{2\tau}\int_0^x F_i(\xi)\xi 
\omega_\tau(\xi)\,d\xi
$$ 
for $i=1,2$.  Integrating shows $G_1(x)-G_2(x)-G_1(0)+G_2(0)=c\omega_\tau(0)$.
Putting $x=0$ shows $c=0$.

The $\alexct$ spaces are Banach spaces under a weighted Alexiewicz norm.
A property
that makes them
somewhat delicate to work with is that
they are not closed under
translations.

\begin{theorem}\label{theoremalexct}
Let $\tau>0$. 
\begin{enumerate}
\item[(a)] $\balexct$ is a Banach space isometrically isomorphic to $\balexc$.
\item[(b)] $\alexct$ is a Banach space isometrically isomorphic to $\alexc$.
\item[(c)] $f\in\alexct$ if and only if $f\omega_\tau\in\alexc$.
\item[(d)] If $f\in\alexct$ then $\actnorm{f}=\norm{f\omega_\tau}$.
\item[(e)] For each $\tau>0$, $\alexc\subsetneq\alexct$.
\item[(f)] $0<r<s$ if and only if $\alexcany{s}\subsetneq\alexcany{r}$.
\item[(g)] If $f\in\alexct$ and $F(x)=\int_0^xf$ then $\abs{F(x)}\leq 
\actnorm{f}e^{x^2/(4\tau)}$ for all $x\in\R$.  And, $F(x)=o(e^{x^2/(4\tau)})$ as 
$\abs{x}\to \infty$.
\item[(h)] If $0<r<s$ then $\alexcany{s}$ is a dense subspace of $\alexcany{r}$.
\item[(i)] If $0<r<s$ then $\norm{f}_r\leq 2\norm{f}_s$ for each $f\in\alexcany{s}$.
\item[(j)] Define $L^1(\omega_\tau)=
\{f\in L^1_{loc}\mid f\omega_\tau\in L^1\}$.  Then $L^1(\omega_\tau)$ is dense
in $\alexcany{\tau}$.
\item[(k)] $\alexct$ is not closed under translation.
\end{enumerate}
\end{theorem}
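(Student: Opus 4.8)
The whole theorem turns on one computation. If $f\in\alexct$ has primitive $F_\tau\in\balexct$ and $G\in\balexc$ is the function attached to it by \eqref{integraleqn}, then differentiating \eqref{integraleqn} distributionally and using $\omega_\tau'(x)=-(x/2\tau)\omega_\tau(x)$ gives
\[
G'=\frac{d}{dx}\bigl(F_\tau\omega_\tau\bigr)+\frac{x}{2\tau}F_\tau\omega_\tau
=\Bigl(f\omega_\tau-\tfrac{x}{2\tau}F_\tau\omega_\tau\Bigr)+\frac{x}{2\tau}F_\tau\omega_\tau
=f\omega_\tau .
\]
Since $G$ is by definition the primitive whose norm is $\actnorm{f}$, this says $f\omega_\tau\in\alexc$ with $\norm{f\omega_\tau}=\norm{G}'_\infty=\actnorm{f}$, which is (d); running the argument backwards (given $G\in\balexc$, solve \eqref{integraleqn} and divide by the nowhere-zero $\omega_\tau$) shows that $f\omega_\tau\in\alexc$ forces $f\in\alexct$, giving (c). Thus $f\mapsto f\omega_\tau$ is linear, norm-preserving, and bijective with inverse $g\mapsto g\,e^{\,\cdot^2/(4\tau)}$, hence an isometric isomorphism of $\alexct$ onto the Banach space $\alexc$; this is (b), and (a) follows after composing with the primitive map $f\mapsto F_\tau$.

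For the inclusions I would first record the elementary stability fact: if $h\in\alexc$ has primitive $H\in\balexc$ and $\rho\in C^1(\R)$ is bounded with $\rho(\pm\infty)=0$ and $\rho'\in L^1$, then $h\rho=(H\rho)'-H\rho'\in\alexc$, since $H\rho\in\balexc$ and $H\rho'\in L^1\subset\alexc$. Both $\omega_\tau$ and every ratio $\omega_r/\omega_s=e^{-(s-r)x^2/(4rs)}$ (a genuine Gaussian when $r<s$) satisfy the hypotheses, so criterion (c) gives $\alexc\subseteq\alexct$ and $\alexcany{s}\subseteq\alexcany{r}$ for $r<s$. Strictness is exhibited by hand: $f\equiv1\in\alexct\setminus\alexc$ because $\omega_\tau\in L^1$ yet the constant has no primitive in $\balexc$; and for $r<s$, $f=e^{x^2/(4s)}$ lies in $\alexcany{r}\setminus\alexcany{s}$ because $f\omega_r$ is a Gaussian while $f\omega_s\equiv1\notin\alexc$. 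The forward implication in (f) then follows by trichotomy.

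The growth estimate (g) is the one genuinely delicate point, the difficulty being the \emph{sharp} constant $1$. With $g=f\omega_\tau$ (primitive $G\in\balexc$) and $f=g\,e^{\,\cdot^2/(4\tau)}$, the definition of the integral gives $F(x)=\int_0^x f=\int_0^x g(\xi)e^{\xi^2/(4\tau)}\,d\xi$. On a half-line the weight $\psi(\xi)=e^{\xi^2/(4\tau)}$ is positive and monotone, so Bonnet's form of the second mean value theorem \cite[Theorem~26]{talviladenjoy} produces, for $x>0$, a point $\eta\in[0,x]$ with $\int_0^x g\psi=\psi(x)\int_\eta^x g=e^{x^2/(4\tau)}\bigl(G(x)-G(\eta)\bigr)$; hence $\abs{F(x)}\le e^{x^2/(4\tau)}\norm{G}'_\infty=\actnorm{f}\,e^{x^2/(4\tau)}$, and $x<0$ is symmetric. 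The ordinary two-term mean value theorem would only yield the constant $2$, so the positive-monotone version is essential here. For the $o(\cdot)$ statement I split $\int_0^x=\int_0^N+\int_N^x$: the fixed first piece is annihilated by $e^{-x^2/(4\tau)}$, while Bonnet on the second gives $e^{-x^2/(4\tau)}\int_N^x g\psi=G(x)-G(\eta)$ with $\eta\ge N$, which is small for large $N$ because $G(\xi)\to G(\infty)$; so $F(x)e^{-x^2/(4\tau)}\to0$.

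The remaining parts are transported across the isometry. For (i), by (d) $\norm{f}_r=\norm{f\omega_r}=\norm{h\rho}$ with $h=f\omega_s$ and $\rho=\omega_r/\omega_s$; integrating by parts on $[a,b]$ and using $\norm{H}_\infty\le\norm{h}$ together with the bound $V(\rho\chi_{[a,b]})\le2$ (immediate from the shape of the unit-height Gaussian, splitting on whether $[a,b]$ contains the origin) gives $\abs{\int_a^b h\rho}\le2\norm{h}$, so $\norm{f}_r\le2\norm{f}_s$. For (j), $f\mapsto f\omega_\tau$ carries $L^1(\omega_\tau)$ exactly onto $L^1$, which is dense in $\alexc$, so $L^1(\omega_\tau)$ is dense in $\alexct$. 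For (h), the image of $\alexcany{s}$ in $\alexc$ is $\{h\,e^{-cx^2}:h\in\alexc\}$ with $c=(s-r)/(4rs)>0$; this set contains every compactly supported $\phi\in\alexc$ (take $h=\phi\,e^{cx^2}$, which is in $\alexc$ because on $\supp\phi$ the factor $e^{cx^2}$ agrees with a bounded $\bv$ multiplier), and such $\phi$ are dense in $\alexc$ by truncating the primitive, so $\alexcany{s}$ is dense in $\alexcany{r}$. Finally (k): for $\phi\in L^1$ the function $f=\phi\,e^{x^2/(4\tau)}$ lies in $\alexct$, but with $a=2\tau$ one computes $(\tau_af)\omega_\tau(x)=e^{\tau}\phi(x-2\tau)e^{-x}$, and for $\phi(x)=e^{-\abs{x}}$ this equals a nonzero constant on $(-\infty,2\tau)$, hence is not in $\alexc$; by (c), $\tau_af\notin\alexct$, so $\alexct$ is not closed under translation. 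The single real obstacle is the sharp constant in (g); everything else is routine once the isometry $f\mapsto f\omega_\tau$ is in hand.
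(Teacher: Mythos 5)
Your proof is correct and rests on the same backbone as the paper's: the identity $G'=f\omega_\tau$ extracted from \eqref{integraleqn} (you differentiate the Volterra equation where the paper integrates it, but it is the same computation), which makes $f\mapsto f\omega_\tau$ an isometric isomorphism of $\alexct$ onto $\alexc$; parts (a)--(f), (i), (j) are then transported across this map essentially as the paper does, and your counterexample for (k), $\phi\,e^{x^2/(4\tau)}$ with $\phi(x)=e^{-\abs{x}}$, differs from the paper's $[(x^2+1)\omega_\tau(x)]^{-1}$ but works for the same reason. The genuine divergence is in (g), and there your route is the more careful one. The paper writes $e^{-x^2/(4\tau)}F(x)=\intinf[f\omega_\tau]g_x$ with $g_x(\xi)=e^{(\xi^2-x^2)/(4\tau)}\chi_{[0,x]}(\xi)$ and applies \eqref{holder} with the claim that $Vg_x=1$; in fact $g_x$ has variation $2$ on $\R$ (up-jump of $e^{-x^2/(4\tau)}$ at $0$, rise of $1-e^{-x^2/(4\tau)}$, down-jump of $1$ at $x$), and since $\inf_\R\abs{g_x}=0$ that inequality as written yields only the constant $2$; the sharp constant $1$ is recovered either by retaining the $\inf\abs{g}$ term over $[0,x]$, or, as you do, by the one-term (Bonnet) mean value theorem. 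The one caution for your write-up: the second mean value theorem as used elsewhere in the paper (proof of Theorem~\ref{theoremheatc}(c)) is the two-endpoint form, which, as you yourself observe, gives only $2$; so you should record that the nonnegative-increasing one-term form follows from it by replacing $e^{\xi^2/(4\tau)}\chi_{[0,x]}(\xi)$ with the increasing function that vanishes on $(-\infty,0)$, equals $e^{\xi^2/(4\tau)}$ on $[0,x]$, and is constant thereafter, for which the two-endpoint formula collapses to $e^{x^2/(4\tau)}\int_\eta^x f\omega_\tau$ with $\eta\in[0,x]$. Finally, your splitting argument for the $o(e^{x^2/(4\tau)})$ statement is a valid replacement for the paper's appeal to the convergence theorem (Theorem~\ref{proplimit}).
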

\begin{proof}
(a) Using the uniqueness for Volterra integral equations, \eqref{integraleqn}
defines a linear isometry between $\balexc$ and $\balexct$.
(b) Follows from the uniqueness of primitives proven above.
(c) Let $F_\tau\in\balexct$.  Then $F_\tau\in C(\R)$ so $F'_\tau$ is 
integrable on compact intervals.  From \eqref{integraleqn} let $G$ be the corresponding function in $\balexc$.  We have
$$
G(x)-G(0)  = 
F_\tau(x)\omega_\tau(x)+\frac{1}{2\tau}\int_0^x F_\tau(\xi)\xi 
\omega_\tau(\xi)\,d\xi
  =  \int_0^x G'=
\int_0^x F'_\tau\omega_\tau.
$$
Therefore, $G'=F'_\tau\omega_\tau$ as elements in $\alexc$.  The isometry
between $\alexc$ and $\alexct$ is then given by $\alexct\ni f\leftrightarrow
f\omega_\tau\in\alexc$.  If $f\in\alexct$ then $f\omega_\tau\in\alexc$ with
primitive $F(x)=\int_{-\infty}^x f\omega_\tau$.
The product $f\omega_\tau$ is well-defined
via $\langle f\omega_\tau,\phi\rangle =\langle f,\phi\omega_\tau\rangle$
for each $\phi\in\D$ since the integral $\intinf (f\omega_\tau)\phi$ exists.
(d)
$$
\norm{f}_\tau=
\norm{F_\tau}_{\tau,\infty}  =  \sup_{x<y}\left|G(x)-G(y)\right|
  =  \sup_{x<y}\left|\int_x^y G'\right|
  =  \sup_{x<y}\left|\int_x^y F_\tau'\omega_\tau\right| =\norm{f\omega_\tau}.
$$
(e) Since $\omega_\tau\in\bv$, if $f\in\alexc$ then $f\omega_\tau\in\alexc$ so
$\alexc\subset\alexct$.
(f) Suppose $0<r<s$.  Let $f\in\alexcany{s}$.  Then $f\omega_s\in\alexc$.  And,
$f\omega_r=(f\omega_s)(\omega_r/\omega_s)$.  The function $\omega_r/\omega_s\in\bv$ so
$f\in\alexcany{r}$.  The examples given below show 
the set inclusions in (e) and (f) are proper.
Suppose $r>s>0$.  The function $f(x)=\exp(x^2/(4r))$ is in $\alexcany{s}$ but not
in $\alexcany{r}$.  The case $r=s$ is trivial.
(g)  Suppose $x>0$.  Write 
$\exp(-x^2/(4\tau))F(x)=\intinf\left[f(\xi)\omega_\tau(\xi)\right]g_x(\xi)\,d\xi$
where $g_x(\xi)=e^{(\xi^2-x^2)/(4\tau)}\chi_{[0,x]}(\xi)$.  Note that 
$\lim_{x\to\infty}g_x(\xi)=0$ for each $\xi\in\R$ and the variation of $g_x$
on $\R$ is
$1$.  The H\"older inequality
\eqref{holder}
gives the first estimate.  By Theorem~\ref{proplimit} we have
$F(x)=o(\exp(x^2/(4\tau))$ as $x\to\infty$.  Similarly as $x\to-\infty$.
(h) Given $f\in\alexcany{r}$ let $f_n=f\chi_{(-n,n)}$.  Then $\{f_n\}\subset\alexcany{s}$.
Let $\alpha<\beta$.  Then $f\chi_{(\alpha,\beta)}\in\alexcany{r}$ and
$f\omega_\tau\chi_{(\alpha,\beta)}\in\alexc$ so
$$
\int_\alpha^\beta\left[f-f_n\right]\omega_r  = 
\int_{-\infty}^{-n}f\omega_r\chi_{(\alpha,\beta)}
+\int_n^\infty f\omega_r\chi_{(\alpha,\beta)}
$$
and $\norm{f-f_n}_r\leq \norm{f\omega_r\chi_{(-\infty,-n)}}+
\norm{f\omega_r\chi_{(n,\infty)}}\to 0$ as $n\to\infty$.
(i) The H\"older inequality \eqref{holder} gives
$$
\left|\int_\alpha^\beta f\omega_r\right|  =  \left|\int_\alpha^\beta\left[f\omega_s\right]
\left[\frac{\omega_r}{\omega_s}\right]\right|\\
  \leq  2\norm{f}_s.
$$
(j) Let $\epsilon>0$.  Let $f\in\alexcany{\tau}$.  Define $F(x)=\int_{-\infty}^x
f\omega_\tau$.  Then $F\in\balexc$.  Note that the primitives of $L^1$ functions
are given by $AC\cap\bv$.  Since $L^1$ is dense in $\alexc$
(\cite[Proposition~3.3]{talvilaconv}) there is $G\in AC\cap\bv$ such that
$\norm{F-G}_\infty<\epsilon$.  Now let $g=G'/\omega_\tau$.
As $1/\omega_\tau$ is locally bounded we have $g\in L^1_{loc}$.  Hence, 
$g\in L^1(\omega_\tau)$.  And,
$
\norm{f-g}_\tau\leq 2\norm{F-G}_\infty<2\epsilon$.
(k) Let $f(x)=[(x^2+1)\omega_\tau(x)]^{-1}$.  Then $f\in\alexct$.  But
$f(x+1)\omega_\tau(x)=\exp(x/[2\tau])\exp(1/[4\tau])[(x+1)^2+1]^{-1}$ so
this translation is  not in $\alexct$.
\end{proof}
This shows that all of the spaces $\balexc$, $\alexc$, $\acn$, $\alexct$,
$\balexct$ are isometrically isomorphic.

If $\alpha>0$ then $f(x)=\exp(x^2/(4\alpha))\in\alexct$
if and only if $\alpha>\tau$.  This shows that the distributions in $\alexct$ need
not be tempered.  And, $g(x)=\exp(\beta\abs{x}^\gamma)\in\alexct$ for
all $\tau>0$ and all $\beta\in\R$ if and only if $0\leq\gamma<2$.
For each $\tau>0$ every polynomial is in $\alexct$.  The H\"older inequality shows
that $L^p\subset\alexct$ for each $1\leq p\leq\infty$.  Similarly for weighted
$L^p$  spaces.
Let $\sigma>0$ and $L^p(\omega_{\sigma})$ ($1\leq p<\infty$)
be the Lebesgue measurable functions for which 
$\intinf \abs{f(x)}^p\,\omega_{\sigma}(x)\,dx$ 
exists.
The H\"older inequality shows $L^p(\omega_\sigma)\subset\alexct$ for all 
$\tau<p\sigma$ when $1<p<\infty$ and $L^1(\omega_\sigma)\subset\alexct$ for all 
$\tau\leq\sigma$ .

Now we look at analogues of Theorem~\ref{theoremheatc} in weighted spaces.
Proofs are similar to the corresponding parts of that theorem
except as where noted.
\begin{theorem}\label{theoremheatweight}
Let $\tau>0$, $0<t<\tau$ and $f\in\alexct$.  Let the primitive of $f$ be $F\in C(\R)$.
\begin{enumerate}
\item[(a)] The integrals 
$f\ast\Theta_t(x)=\Theta_t\ast f(x)=F\ast\Theta_t'(x)
=[F\ast\Theta_t]'(x)$ 
exist for
each $x\in\R$ and $0<t<\tau$.  
\item[(b)] $f\ast\Theta_t(x)$ is $C^\infty$ for $(x,t)\in\R\times(0,\tau)$.
\item[(c)]  Let $0<t<\sigma\leq\tau$.  For each $x\in\R$, 
$\abs{f\ast\Theta_t(x)}\leq\norm{f}_\sigma\exp(x^2/[4(\sigma-t)])/(2\sqrt{\pi t})$.
Let $0<t\leq\tau-\sigma$.  The estimate $\norm{(f\ast\Theta_t)\omega_\sigma}_\infty
\leq\norm{f}_\tau/(2\sqrt{\pi t})$ is sharp in the sense that the coefficient
of $\norm{f}_\tau$ cannot be reduced.
\item[(d)] Let $\sigma\geq\tau$ and let $C(\omega_\sigma,\R)$ be the Banach space of 
continuous functions with the
norm $\norm{F\omega_\sigma}_\infty$.  Define the linear operator 
$\Phi_t\fn\alexct\to C(\omega_\sigma,\R)$ by
$\Phi_t(f)=f\ast\Theta_t$.  Then $\norm{\Phi_t}=1/(2\sqrtpt)$.
\item[(e)] $\fastx=o(\exp(x^2/(4[\tau-t]))$ as $\abs{x}\to\infty$.
\item[(f)] Let $0<\sigma<\tau$ and $0<t<\tau-\sigma$. Then $f\ast\Theta_t\in\alexcany{\sigma}$ and
$\norm{f\ast\Theta_t}_\sigma\leq\sqrt{(\tau-\sigma)/(\tau-\sigma-t)}\norm{f}_\sigma$.
Define the linear operator $\Psi_t\fn\alexcany{\sigma}\to \alexcany{\sigma}$ by
$\Psi_t(f)=f\ast\Theta_t$.  Then $\norm{\Psi_t}= \sqrt{(\tau-\sigma)/(\tau-\sigma-t)}$.
\item[(g)] Let $u(x,t)=f\ast\Theta_t(x)$ then $u\in C^2(\R)\times C^1((0,\tau))$ and $u$ is a solution 
of  the heat equation in this region.  Let $0<\sigma<\tau$.  Then
$\lim_{t\to 0^+}\norm{u_t-f}_\sigma=0$.
(h) Let $0<\sigma<\tau$ and $0<t<\tau-\sigma$.  Then $\intinf \fastx\Theta_\sigma(x)\,dx=\intinf f(x)
\Theta_{\sigma+t}(x)\,dx$.
\item[(i)] Let $0<t<\tau-\sigma$. Then
$\norm{f\ast\Theta_t}_\sigma\leq\sqrt{\sigma(\tau-t)/[t(\tau-\sigma-t)]}\actnorm{f}$.
Now consider
$\Psi_t\fn\alexct\to \alexcany{\sigma}$, as the restriction of the operator
defined in part (f) to the subspace $\alexct$.
Then $\norm{\Psi_t}\leq \sqrt{\sigma(\tau-t)/[t(\tau-\sigma-t)]}$.
\item[(j)] For each $0<t<\tau$, $f\ast\theta_t(x)$ is real analytic as a function of 
$x\in\R$.  For each $x\in\R$, $f\ast\Theta_t(x)$ is real analytic as a 
function of $0<t<\tau$.
\end{enumerate}
\end{theorem}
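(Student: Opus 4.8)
The plan is to reduce everything to Theorem~\ref{theoremheatc} through the isometry $f\leftrightarrow f\omega_\tau$ of Theorem~\ref{theoremalexct}(c,d), together with a single Gaussian–conjugation identity. Writing $g=f\omega_\tau\in\alexc$ and completing the square in $\xi$ inside $\fastx=\intinf g(\xi)\omega_\tau(\xi)^{-1}\Theta_t(x-\xi)\,d\xi$ — the shifted analogue of the product rule \eqref{heatproduct} with parameters $t$ and $-\tau$, legitimate since $0<t<\tau$ forces $t-\tau\neq0$ and $t\tau/(\tau-t)>0$ — I would first establish the exact identity
\begin{equation*}
\fastx=\sqrt{\frac{\tau}{\tau-t}}\,\exp\!\Big(\frac{x^2}{4(\tau-t)}\Big)\,\Big(g\ast\Theta_{s}\Big)\!\Big(\frac{\tau x}{\tau-t}\Big),\qquad s=\frac{t\tau}{\tau-t},
\end{equation*}
valid for $0<t<\tau$. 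This formula is the engine of the whole theorem: it writes the weighted convolution as an \emph{ordinary} $\alexc$–convolution of $g$, evaluated at the dilated argument $\tau x/(\tau-t)$ and multiplied by a smooth Gaussian prefactor.

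From here parts (a), (b), (e), (j) follow by transporting the corresponding statements for $g\ast\Theta_s$ across this identity. Existence and the equalities $\fast=\Theta_t\ast f=F\ast\Theta_t'=[F\ast\Theta_t]'$ come from Theorem~\ref{theoremheatc}(a) and the linear change of variables in \cite{talvilaacrn}; smoothness in $\R\times(0,\tau)$ from Theorem~\ref{theoremheatc}(b), since the prefactor and the dilation are $C^\infty$; analyticity (j) likewise from Theorem~\ref{theoremheatc}(j). For the decay (e) I would combine $(g\ast\Theta_s)(\tau x/(\tau-t))\to0$ as $\abs{x}\to\infty$ (Theorem~\ref{theoremheatc}(e)) with the growth of the prefactor, which yields exactly $\fastx=o(\exp(x^2/[4(\tau-t)]))$. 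Part (h) I would prove directly as in Theorem~\ref{theoremheatc}(h), via Fubini and the semigroup law \eqref{heatconvolution} collapsing $\Theta_t\ast\Theta_\sigma$ to $\Theta_{\sigma+t}$.

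For the pointwise estimate (c) I would bypass the identity and instead repeat the second–mean–value–theorem argument of Theorem~\ref{theoremheatc}(c) verbatim, now pairing $f\omega_\sigma\in\alexc$ (available since $\sigma\le\tau$, so $f\in\alexcany{\sigma}$) against the multiplier $g_x(\xi)=\Theta_t(x-\xi)/\omega_\sigma(\xi)$, which is again a one–humped Gaussian in $\xi$: monotone on each side of its peak, vanishing at $\pm\infty$, with maximum value $\exp(x^2/[4(\sigma-t)])/(2\sqrtpt)$. The mean value theorem gives $\fastx=(\text{peak})\int_{\xi_1}^{\xi_2}f\omega_\sigma$, and $\abs{\int_{\xi_1}^{\xi_2}f\omega_\sigma}\le\norm{f\omega_\sigma}=\norm{f}_\sigma$ delivers the stated bound; part (d) is its supremum version over $\norm{f}_\sigma=1$. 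Sharpness of (c), (d), (f) I would read off from a narrow–Gaussian test family, taking $f=\Theta_r$ and letting $r\to0^+$, as in the unweighted case. Part (g) needs more care: because $\alexct$ is \emph{not} translation invariant (Theorem~\ref{theoremalexct}(k)), I cannot simply invoke continuity of translation in $\norm{\cdot}_\sigma$; instead I would transport $u_t-f$ through the identity, express its $\sigma$–norm in terms of $g\ast\Theta_s$ and its dilation, and let $t\to0^+$ so that $s\to0$ and the dilation tends to the identity, reducing the limit to continuity in norm of the ordinary heat flow on $g\in\alexc$.

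I expect the main obstacle to be pinning down the exact constants $\sqrt{(\tau-\sigma)/(\tau-\sigma-t)}$ in (f) and $\sqrt{\sigma(\tau-t)/[t(\tau-\sigma-t)]}$ in (i), as opposed to mere finiteness. Membership $f\ast\Theta_t\in\alexcany{\sigma}$ follows from (e) together with $\sigma<\tau-t$, which forces $(f\ast\Theta_t)\omega_\sigma$ to decay; but the sharp norm requires evaluating $\norm{(f\ast\Theta_t)\omega_\sigma}=\sup_{a<b}\abs{\int_a^b(f\ast\Theta_t)\omega_\sigma}$ precisely. The route is to insert the identity, change variables $y=\tau x/(\tau-t)$ to absorb the dilation (producing the Jacobian factor $\sqrt{(\tau-t)/\tau}$), and recognise the resulting Gaussian factor $\exp(-\mu y^2)$ as a weight $\omega_{\rho}$; this rewrites $\norm{f\ast\Theta_t}_\sigma$ as a dilation constant times a weighted Alexiewicz norm of $g\ast\Theta_s$, which Theorem~\ref{theoremheatc}(f) then controls. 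Part (f) is naturally carried through the $\sigma$–decomposition and (i), being measured in $\actnorm{f}=\norm{f\omega_\tau}=\norm{g}$, through the $\tau$–decomposition above. Tracking the prefactor, the Jacobian, and the weight index through this chain is what produces the square–root constants, and matching the lower bound with the Gaussian test family is the step that confirms they cannot be improved.
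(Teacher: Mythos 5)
Your conjugation identity is correct (completing the square does give $\fastx=\sqrt{\tau/(\tau-t)}\,e^{x^2/(4(\tau-t))}\,(g\ast\Theta_s)(\tau x/(\tau-t))$, $g=f\omega_\tau$, $s=t\tau/(\tau-t)$; indeed the paper's kernel $\xi\mapsto\Theta_t(x-\xi)/\omega_\tau(\xi)$ in its proofs of (a) and (e) is exactly this Gaussian in disguise), and it carries (a), (b), (e), (j) and the pointwise bound in (c) much as the paper does. The genuine gap is in (f), which is the heart of the theorem. Inserting the identity and substituting $y=\tau x/(\tau-t)$ gives $\norm{f\ast\Theta_t}_\sigma=\sqrt{(\tau-t)/\tau}\,\norm{(g\ast\Theta_s)\omega_\rho}$ with $\rho=\sigma\tau^2/[(\tau-t)(\tau-\sigma-t)]$, whereas $\norm{f}_\sigma=\norm{g\,\omega_\nu}$ with $\nu=\sigma\tau/(\tau-\sigma)$, and $\rho\neq\nu$ once $t>0$. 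Theorem~\ref{theoremheatc}(f) controls only the \emph{unweighted} norm $\norm{g\ast\Theta_s}\leq\norm{g}$; it says nothing about a weighted norm of $g\ast\Theta_s$ versus a differently weighted norm of $g$. So your reduction converts the weighted estimate to be proved into another weighted estimate of exactly the same type: the argument is circular. If you instead strip the weight $\omega_\rho$ with the H\"older inequality \eqref{holder} you pay its variation (a factor of at least $2$) and end up bounding by $\norm{f}_\tau$ rather than $\norm{f}_\sigma$ --- finiteness, but neither the constant $\sqrt{(\tau-\sigma)/(\tau-\sigma-t)}$ nor the correct norm. The paper does not route through Theorem~\ref{theoremheatc}(f) at all: it proves (f) directly by two Fubini interchanges and the second mean value theorem applied to the cross-weight kernel $\psi(x)=\omega_\sigma(x)/\omega_\tau(x-\xi)$, whose maximum $e^{\xi^2/[4(\tau-\sigma)]}$, integrated against $\Theta_t(\xi)$, produces exactly $\sqrt{(\tau-\sigma)/(\tau-\sigma-t)}$. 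Part (i) in your proposal inherits the same defect.

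The second concrete failure is your extremal family. Taking $f=\Theta_r$ and letting $r\to0^+$ does prove sharpness in (c) and (d) (it is the paper's choice there too), but it provably cannot give the operator norm in (f): by \eqref{heatproduct}, $\norm{\Theta_r}_\sigma=\sqrt{\sigma/(r+\sigma)}$ and $\norm{\Theta_{r+t}}_\sigma=\sqrt{\sigma/(r+t+\sigma)}$, so the ratio tends to $\sqrt{\sigma/(\sigma+t)}<1$, while the claimed norm $\sqrt{(\tau-\sigma)/(\tau-\sigma-t)}$ exceeds $1$. The extremals must probe the growth edge of $\alexct$: the paper takes the \emph{growing} Gaussians $f=\Theta_{-s}$ with $s>\tau$, for which $f\ast\Theta_t=\Theta_{t-s}$, $\norm{f}_\sigma=\sqrt{\sigma/(s-\sigma)}$ and $\norm{f\ast\Theta_t}_\sigma=\sqrt{\sigma/(s-\sigma-t)}$, and lets $s\to\tau^+$; without this, the equality $\norm{\Psi_t}=\sqrt{(\tau-\sigma)/(\tau-\sigma-t)}$ is simply not established. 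Finally, your plan for (g) rests on two ingredients not available off the shelf --- continuity of dilations $h\mapsto h(\lambda\,\cdot)$ in the Alexiewicz norm as $\lambda\to1$, and the vanishing of the variation of the $t$-dependent Gaussian multiplier differences --- both plausible and repairable, but needing proof; the paper avoids dilations entirely, using the Fubini interchange to reduce (g) to $\intinf\Theta_t(\xi)\norm{f(\cdot-\xi)-f}_\sigma\,d\xi\to0$ via the summability-kernel property. The (f)/(i) circularity and the wrong test family are the substantive gaps.
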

\begin{proof}
(a) This follows from the fact that for each $x\in\R$ and each $0<t<\tau$ the function
$\xi\mapsto \Theta_t(x-\xi)/\omega_\tau\in\bv$.  The estimate in
Theorem~\ref{theoremalexct}(g) allows integration by parts.  Taylor's
theorem shows that $[F\ast\Theta_t]'(x)=F'\ast\Theta_t(x)$.
(b) Similar to Theorem~\ref{theoremheatc}(b).
(c)  Write
$$
\fastx  = \intinf 
\left[f(\xi)\omega_\sigma(\xi)\right]\left[\Theta_t(x-\xi)/\omega_\sigma(\xi)\right]\,d\xi.
$$
The function $\xi\mapsto \Theta_t(x-\xi)/\omega_\sigma(\xi)$ is increasing on
$(-\infty,\sigma x/(\sigma-t)]$ and decreasing on $[\sigma x/(\sigma-t),\infty)$.  
Using the
second mean value theorem for integrals as in Theorem~\ref{theoremheatc}(c) we have,
\begin{equation}
\fastx=\frac{\Theta_t(x-\sigma x/(\sigma-t))}{\omega_\sigma(\sigma x/(\sigma-t))}\int_{x_1}^{x_2}
f\omega_\sigma=\frac{e^{\frac{x^2}{4(\sigma-t)}}}
{2\sqrt{\pi t}}\int_{x_1}^{x_2}
f\omega_\sigma,\label{2ndmvt}
\end{equation}
where now $x_1\leq\sigma x/(\sigma -t)$ and $x_2\geq \sigma x/(\sigma -t)$.
This gives the first inequality.  Also,
$\abs{\fastx\omega_\sigma(x)}\leq \exp(x^2/[4(\tau-t)])\exp(-x^2/[4\sigma])\norm{f}_\tau
/(2\sqrt{\pi t})$ and the second inequality follows.
Using \eqref{heatnorm}-\eqref{heatproduct}, 
the example $f=\Theta_s$ and the limit $s\to 0^+$ 
shows the estimate is sharp.
(d) This follows from (c).
(e) Write
$$
e^{-x^2/(4[\tau-t])}\fastx=\frac{1}{2\sqrt{\pi t}}\intinf\left[f(\xi)\omega_\tau(\xi)\right]
\left[e^{-(x-\xi)^2/(4t)}e^{\xi^2/(4\tau)}e^{-x^2/(4[\tau-t])}\right]\,d\xi.
$$
The function $\xi\mapsto\exp(-(x-\xi)^2/(4t))\exp(\xi^2/(4\tau))\exp(-x^2/(4[\tau-t]))$
has variation as a function of $\xi$ equal to $2$.  It has limit $0$ as $\abs{x}\to\infty$.
The result follows by Theorem~\ref{proplimit}.
(f)
Let $0<t<\tau-\sigma$ and $-\infty<\alpha<\beta
<\infty$.   From (a)
\begin{eqnarray}
\int_\alpha^\beta \fastx\omega_\sigma(x)\,dx & = & \int_\alpha^\beta\intinf f(x-\xi)
\Theta_t(\xi)\omega_\sigma(x)\,d\xi\,dx\label{g1}\\
 & = & \int_\alpha^\beta\intinf f(\xi)\Theta_t(x-\xi)\omega_\sigma(x)\,d\xi\,dx\label{g2}\\
 & = & \intinf[f(\xi)\omega_\tau(\xi)]\int_\alpha^\beta
\frac{\Theta_t(x-\xi)\omega_\sigma(x)}{\omega_\tau(\xi)}\,dx\,d\xi.\label{g3}
\end{eqnarray}
We can interchange orders of integration in \eqref{g3} since the function
$\xi\mapsto\Theta_t(x-\xi)\omega_\sigma(x)/\omega_\tau(\xi)$ has variation
$\exp(x^2/[4(\tau-t)])\exp(-x^2/[4\sigma])/\sqrt{\pi t}$.  The conditions
of \cite[Proposition~A.3]{talvilaconv} are then satisfied.  Now consider
\begin{align}
&\intinf\int_\alpha^\beta f(x-\xi)\Theta_t(\xi)\omega_\sigma(x)\,dx\,d\xi\\
&\quad=\intinf\Theta_t(\xi)\intinf[f(x)\omega_\tau(x)]\left[\frac{\omega_\sigma(x+\xi)}
{\omega_\tau(x)}\right]\chi_{(\alpha-\xi,\beta-\xi)}(x)\,dx\,d\xi.
\label{g4}
\end{align}
The change of variables is justified by \cite[Theorem~11]{talviladenjoy}.
Note that $V_{x\in\R}[\omega_\sigma(x+\xi)/\omega_\tau(x)]=\exp(\xi^2/[4(\tau-\sigma)])$.
Use the inequality $V(gh)\leq V\!g\norm{h}_\infty+\norm{g}_\infty Vh$ with $g(x)
=\omega_\sigma(x+\xi)/\omega_\tau(x)$ and $h(x)=\chi_{(\alpha-\xi,\beta-\xi)}(x)$ to see that
the conditions of \cite[Proposition~A.3]{talvilaconv} are satisfied.   We then have equality
between \eqref{g1} and \eqref{g4}, upon interchanging orders of
integration in \eqref{g4} and changing variables.  This gives
$$
\int_\alpha^\beta\fastx\omega_\sigma(x)\,dx  =  
\intinf\Theta_t(\xi)\intinf
\left[f(x-\xi)\omega_\sigma(x-\xi)\chi_{(\alpha,\beta)}(x)\right]\psi(x)\,dx\,d\xi,
$$
where $\psi(x)=\omega_\sigma(x)/\omega_\tau(x-\xi)$.
Note that $\psi$ has a maximum at $-\sigma\xi/(\tau-\sigma)$.  Using
the second mean value theorem as in Theorem~\ref{theoremheatc}(c), there are
$x_1\leq -\sigma\xi/(\tau-\sigma)\leq x_2$ such that
\begin{align}
&\left|\int_\alpha^\beta\fastx\omega_\sigma(x)\,dx\right|
\label{prop3.7}\\
&\qquad  =  
\left|\intinf\Theta_t(\xi) e^{\xi^2/[4(\tau-\sigma)]}\int_{(x_1,x_2)\cap(\alpha,\beta)}
f(x-\xi)\omega_\sigma(x-\xi)\,dx\,d\xi\right|\notag\\
&\qquad \leq  \norm{f}_\sigma\intinf\Theta_t(\xi) e^{\xi^2/[4(\tau-\sigma)]}\,d\xi
 =  \norm{f}_\sigma\sqrt{\frac{\tau-\sigma}{\tau-\sigma-t}}.\notag
\end{align}
This also shows the limits exist as $\alpha\to-\infty$ and $\beta\to
\infty$.  Hence, $f\ast\Theta_t\in\alexcany{\sigma}$.
If we let $f=\Theta_{-s}$ for $s>\tau$ we get $f\ast\Theta_t=\Theta_{t-s}$,
$\norm{f}_\sigma=\sqrt{\sigma/(s-\sigma)}$ and
$\norm{f\ast\Theta_t}_\sigma=\sqrt{\sigma/(s-\sigma-t)}$.  Letting $s\to\tau^+$
shows the estimate is sharp and gives the norm of $\Psi_t$.
(g) Since we can differentiate under the integral sign, $u$ is a solution of
the heat equation in $\R\times(0,\tau)$.
To show the initial conditions are
taken on in the weighted norm,
use the equality between \eqref{g1} and \eqref{g4}.  This gives
$$
\int_\alpha^\beta[\fastx-f(x)]\omega_\sigma(x)\,dx=\intinf\Theta_t(\xi)\int_\alpha^\beta
[f(x-\xi)-f(x)]\omega_\sigma(x)\,dx\,d\xi.
$$
Write
$$
\int_\alpha^\beta
[f(x-\xi)-f(x)]\omega_\sigma(x)\,dx =I_1(\xi)-I_2(\xi)+I_3(\xi)
$$
where 
\begin{eqnarray*}
I_1(\xi) & = & \int_{\alpha-\xi}^\alpha f(x)\omega_\sigma(x+\xi)\,dx\\
I_2(\xi) & = & \int_{\beta-\xi}^\beta f(x)\omega_\sigma(x+\xi)\,dx\\
I_3(\xi) & = & \int_{\alpha}^\beta f(x)[\omega_\sigma(x+\xi)-\omega_\sigma(x)]\,dx.
\end{eqnarray*}
Now show integrals of  $I_1$, $I_2$, $I_3$ against $\Theta_t(\xi)$ 
tend to $0$ as $t\to 0^+$, uniformly in $\alpha$
and $\beta$.  Let $F_\tau(x)=\int_{-\infty}^x f\omega_\tau$.
Use the H\"older inequality \eqref{holder} to get
\begin{eqnarray*}
\intinf\Theta_t(\xi)\abs{I_1}(\xi)\,d\xi & \leq & 2\intinf\Theta_t(\xi)
\sup_{\alpha\in\R}\max_{y,z\in[\alpha-\xi,\alpha]}\abs{F_\tau(y)-
F_\tau(z)}
e^{\xi^2/[4(\tau-\sigma)]}\,d\xi\\
 & \to 0 & \text{ as }t\to 0^+
\end{eqnarray*}
since $F_\tau$ is uniformly continuous on $\R$ and $\Theta_t$ is a
summability kernel (approximate identity).  Similarly, for $\xi<0$.  
Similarly, for $I_2$.  And,
$$
\abs{I_3}(\xi)\leq 2\norm{F_\tau}_\infty\intinf\left|\frac{\partial}{\partial x}
\frac{\omega_\sigma(x+\xi)-\omega_\sigma(x)}{\omega_\tau(x)}\right|\,dx.
$$
Dominated convergence allows us to take the limit $\xi\to 0$ under the integral
sign to get $I_3(\xi)\to 0$.  Using \eqref{g1}, \eqref{g4}, \eqref{heatnorm} we have
$$
\norm{f\ast\Theta_t-f}_\sigma\leq \intinf\Theta_t(\xi)\norm{f(\cdot-\xi)-f(\cdot)}_\sigma
\,d\xi
$$
and $\norm{f(\cdot-\xi)-f(\cdot)}_\sigma$ is continuous at $\xi=0$.  But, $\Theta_t$ is
a summability kernel so $\lim_{t\to 0^+}\norm{u_t-f}_\sigma=0$.
(h) The calculation following \eqref{g3} shows that 
$$
\intinf\fastx\Theta_\sigma(x)\,dx= \intinf f(\xi)\Theta_t\ast\Theta_\sigma(\xi)\,d\xi
=\intinf f(\xi)\Theta_{\sigma+t}(\xi)\,d\xi.
$$
(i) We can write 
$$
\int_\alpha^\beta \fastx\omega_\sigma(x)\,dx=\int_\alpha^\beta\intinf \left[f(\xi)
\omega_\tau(\xi)\right]\left[\frac{\Theta_t(x-\xi)}{\omega_\tau(\xi)}\right]
d\xi\,\omega_\sigma(x)
\,dx.
$$
Now use \eqref{2ndmvt} to get
$$
\norm{f\ast\Theta_t}_\sigma  \leq  \frac{2\sqrt{\pi \sigma}\sqrt{\tau-t}\,\Theta_{t-\tau}\ast
\Theta_\sigma(0)\norm{f}_\tau}{\sqrt{t}}
=\sqrt{\frac{\sigma(\tau-t)}{t(\tau-\sigma-t)}}\,\norm{f}_\tau.
$$
(j) See Theorem~10.2.1,  Theorem~10.3.1 in \cite{cannon} and the proof of 
Theorem~\ref{theoremheatc}(j).  Part (g) of Theorem~\ref{theoremalexct} gives the necessary
growth condition on $F$.
\end{proof}

Note that in (f) the coefficient remains bounded as $t\to 0^+$ but not so in (i).
This will be important for uniqueness Theorem~\ref{theoremuniqueweighted} below.

\begin{examples}\label{exampleweighted}
(a) If $f\in\alexct$ then $f\ast\Theta_t$ need not be in $\alexct$.  Let 
$f(x)=1/[(x^2+1)\omega_\tau(x)]$.  Since $f\geq 0$, for each $t>0$ 
the Fubini--Tonelli theorem gives
\begin{eqnarray*}
\actnorm{f\ast\Theta_t} & = &  \intinf\intinf\frac{\Theta_t(x-\xi)\omega_\tau(x)}
{\omega_\tau(\xi)(\xi^2+1)}\,d\xi\,dx
= 2\sqrt{\pi \tau}\intinf\frac{e^{\xi^2/(4\tau)}}{\xi^2+1}\Theta_t\ast\Theta_\tau(\xi)
\,d\xi\\
 & = & 2\sqrt{\pi \tau}\intinf\frac{e^{\xi^2/(4\tau)}\Theta_{t+\tau}(\xi)}{\xi^2+1}
\,d\xi =\infty.
\end{eqnarray*}

(b) Let $s>\tau$ and take $f=\Theta_{-s}$.  Then $f\in\alexct$ and for $0<t<s$ 
formula \eqref{heatconvolution} gives $f\ast\Theta_t(x)=\Theta_{t-s}(x)=\exp(x^2/[4(s-t)])/[2\sqrt{\pi(s-t)}]$.
Note that if $0<\sigma<s$ then $f\ast\Theta_t\in\alexcany{\sigma}$ if and only if
$0<t<s-\sigma$.  In particular, $f\ast\Theta_t\in\alexct$ if and only if
$0<t<s-\tau$.

(c)  Let $z$ be a complex number.  If $f(x)=\exp(izx)$ then we get the plane wave
solution $\fastx=e^{izx-z^2t}$ (c.f. \cite[p.~207]{john}).  
For all complex $z$, both $f$ and $f\ast\Theta_t$ are in $\alexct$ for all
$\tau>0$.

(d)  If $f$ is a polynomial of degree $n$ then
$f\ast\Theta_t$ is a polynomial in $x$ and $t$ of degree $n$ in $x$ and degree $\lfloor
n/2\rfloor$ in $t$.  To see this let $p_n(x)=x^n$.  The binomial theorem followed by
basic gamma function identities then give
\begin{eqnarray*}
p_n\ast\Theta_t(x) & = & \frac{1}{\sqrt{\pi}}\sum_{m=0}^n(-1)^m\binom{n}{m}x^{n-m}
(2\sqrt{t})^m\intinf \xi^m e^{-\xi^2}\,d\xi\\
 & = & \sum_{\ell=0}^{\lfloor n/2\rfloor}\frac{n!x^{n-2\ell}t^\ell}{(n-2\ell)!\ell!}
 =  (i\sqrt{t})^n H_n(x/[2i\sqrt{t}]).
\end{eqnarray*}
The last line comes from the explicit form of the Hermite polynomial.  See
\cite[8.950.2]{gradshteyn}. For example,
$p_3\ast\Theta_t(x)=x^3+6xt$.
In the literature these are called heat polynomials \cite{widderbook}.
For each polynomial $f$ we have $f\ast\Theta_t\in\alexct$ for
each $\tau>0$.  This also gives
$H_n\ast\Theta_t(x)=(1-4t)^{n/2}H_n(x/\sqrt{1-4t})$.  The above quoted formula shows
this is valid for all $t>0$.

(e) If we take $g(x)=\partial \exp(ix^2/(4s))/\partial s =-ix^2\exp(ix^2/(4s))/(4s^2)$.
Then we can get $g\ast\Theta_t$ by differentiating the solution in \eqref{example(d)}.
Note that $g\in\alexct$ for each $\tau>0$.

(f) Let $p$ be a polynomial, $a>0$, $b\in\R$, $0<\gamma<1$.  Define
$f(x)=p(x)\exp(x^2/(4a)+b\abs{x}^\gamma)$.  Let $0<\tau<a$.  Writing
$G(x)=\int_{-\infty}^x f(\xi)\omega_{\tau}(\xi)\,d\xi$ 
defines $G\in\balexc$.
Since $G'=f\omega_\tau$ we have $f\in\alexct$.
\end{examples}

\begin{prop}\label{propweight}
Let $0<\sigma<\tau$ and $0<t<\tau-\sigma$.
Let $f\in\alexct$ and $u(x,t)=\fastx$.  Then
$\norm{u_t'}_\sigma\leq 
(\tau-\sigma)/[\sqrt{\pi t}\,(\tau-\sigma-t)]$;
$$
\norm{\partial u(\cdot, t)/\partial t}_\sigma
=\norm{u_t''}_\sigma
\leq 
\frac{\norm{f}_\sigma}{2t}\left[\frac{(\tau-\sigma)^{3/2}}{
(\tau-\sigma-t)^{3/2}}+\frac{\sqrt{\tau-\sigma}}{\sqrt{\tau-\sigma-t}}\right];
$$
$$
\abs{u_t'(x)}
\leq \frac{e^{x^2/4(\sigma-t)}\norm{f}_\sigma\sqrt{\sigma}}
{\sqrt{\sigma-t}}\left[
\frac{1}{t}+\frac{\abs{x}}{2\sqrt{\pi\sigma(\sigma-t)t}}\right].
$$
\end{prop}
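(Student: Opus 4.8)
The plan is to reduce everything to the machinery already developed for Theorem~\ref{theoremheatweight}. Since the heat kernel is smooth and, by Theorem~\ref{theoremheatweight}(b), $u$ is $C^\infty$ on $\R\times(0,\tau)$, we may differentiate under the integral sign to write $u_t'(x)=f\ast\Theta_t'(x)$ and $\partial u(\cdot,t)/\partial t=u_t''(x)=f\ast\Theta_t''(x)$ for $0<t<\tau$. Thus each of the three displayed quantities is a convolution of $f$ with a derivative of the heat kernel, and the only new ingredients are Gaussian-moment computations together with the elementary identities $\Theta_t'(\xi)=-\xi\Theta_t(\xi)/(2t)$ and $\Theta_t''(\xi)=(\xi^2-2t)\Theta_t(\xi)/(4t^2)$.

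First I would treat the two weighted-norm bounds. These follow the proof of Theorem~\ref{theoremheatweight}(f), with the outer kernel $\Theta_t$ replaced by $\Theta_t'$ and by $\Theta_t''$ respectively: the Fubini interchanges justified by \cite[Proposition~A.3]{talvilaconv} and the change of variables from \cite[Theorem~11]{talviladenjoy} go through unchanged, because $\Theta_t'$ and $\Theta_t''$ are again functions of bounded variation vanishing at $\pm\infty$, and the second mean value theorem is applied to the same weight $\psi(x)=\omega_\sigma(x)/\omega_\tau(x-\xi)$, whose maximum value is $\exp(\xi^2/[4(\tau-\sigma)])$. Taking absolute values of the (now sign-changing) outer kernel at the last step gives, for $k=1,2$,
\[
\norm{u_t^{(k)}}_\sigma\le\norm{f}_\sigma\intinf\abs{\Theta_t^{(k)}(\xi)}e^{\xi^2/[4(\tau-\sigma)]}\,d\xi.
\]
The integral for $k=1$ evaluates exactly to $(\tau-\sigma)/[\sqrt{\pi t}\,(\tau-\sigma-t)]$; for $k=2$ one uses $\abs{\xi^2-2t}\le\xi^2+2t$ together with the second and zeroth Gaussian moments to obtain $\frac{1}{2t}[(\tau-\sigma)^{3/2}(\tau-\sigma-t)^{-3/2}+(\tau-\sigma)^{1/2}(\tau-\sigma-t)^{-1/2}]$. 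These yield the first two displayed bounds, and convergence of both Gaussian integrals requires only $0<t<\tau-\sigma$, the standing hypothesis.

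For the pointwise bound I would instead apply the H\"older inequality \eqref{holder} directly. Writing $u_t'(x)=\intinf[f(\xi)\omega_\sigma(\xi)]\,g_x(\xi)\,d\xi$ with $g_x(\xi)=\Theta_t'(x-\xi)/\omega_\sigma(\xi)$, and recalling $\norm{f}_\sigma=\norm{f\omega_\sigma}$ from Theorem~\ref{theoremalexct}(d), the fact that $g_x(\pm\infty)=0$ (which needs $0<t<\sigma$, as in Theorem~\ref{theoremheatweight}(c)) reduces \eqref{holder} to $\abs{u_t'(x)}\le\norm{f}_\sigma\,Vg_x$. Now $g_x(\xi)=\frac{\xi-x}{2t}h_x(\xi)$, where $h_x(\xi)=\Theta_t(x-\xi)/\omega_\sigma(\xi)$ is the positive unimodal function from the proof of Theorem~\ref{theoremheatweight}(c); completing the square gives $h_x(\xi)=\frac{e^{x^2/[4(\sigma-t)]}}{2\sqrt{\pi t}}e^{-c(\xi-\xi^\ast)^2}$ with $c=(\sigma-t)/(4t\sigma)$ and $\xi^\ast=\sigma x/(\sigma-t)$. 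Since $g_x$ is smooth with integrable derivative, $Vg_x=\intinf\abs{g_x'}\le\frac{1}{2t}\intinf h_x+\frac{1}{2t}\intinf\abs{\xi-x}\,\abs{h_x'}$; bounding $\abs{\xi-x}\le\abs{\xi-\xi^\ast}+\abs{x}t/(\sigma-t)$ and evaluating the three elementary Gaussian integrals yields exactly the stated bound.

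The routine part is the Gaussian bookkeeping; the one point demanding care is the pointwise estimate, where the multiplier $g_x$ is no longer monotone (because $\Theta_t'$ changes sign), so one cannot invoke the second mean value theorem as in Theorem~\ref{theoremheatweight}(c) and must instead control $Vg_x$ through $\intinf\abs{g_x'}$. Checking that the $\frac{1}{2t}\intinf h_x$ term and the cross term in $g_x'$ each contribute a summand $\frac{1}{2t}\,e^{x^2/[4(\sigma-t)]}\sqrt{\sigma}/\sqrt{\sigma-t}$, so that together they produce the coefficient $1/t$ rather than $1/(2t)$, is the small surprise that makes the constant come out as displayed.
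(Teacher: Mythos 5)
Your proposal is correct and follows essentially the same route as the paper: the two weighted-norm bounds come from the argument of \eqref{prop3.7} with $\Theta_t$ replaced by $\Theta_t'$, $\Theta_t''$ and absolute values placed on the sign-changing kernel, and the pointwise bound comes from the H\"older inequality \eqref{holder} reducing to $\norm{f}_\sigma V_{\xi\in\R}[\Theta_t'(x-\xi)/\omega_\sigma(\xi)]$, which is then computed as $\int\abs{g_x'}$ after completing the square. The only difference from the paper's proof is organizational (you apply the triangle inequality to the polynomial factor before the Gaussian change of variables rather than after), and your Gaussian-moment bookkeeping, including the observation that two terms each contribute $\tfrac{1}{2t}e^{x^2/[4(\sigma-t)]}\sqrt{\sigma}/\sqrt{\sigma-t}$ to give the coefficient $1/t$, reproduces the stated constants exactly.
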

\begin{proof}
As in \eqref{prop3.7},
$$
\left|\int_\alpha^\beta f\ast\Theta_t'(x)\omega_\sigma(x)\,dx\right|
  \leq  \norm{f}_\sigma\intinf\abs{\Theta_t'(\xi)}e^{\xi^2/[4(\tau
-\sigma)]}\,d\xi
=  \frac{(\tau-\sigma)\norm{f}_\sigma}{\sqrt{\pi t}\,(\tau-\sigma-t)}.
$$
Similarly,
\begin{eqnarray*}
\left|\int_\alpha^\beta f\ast\Theta_t''(x)\omega_\sigma(x)\,dx\right|
 & \leq & 
 \norm{f}_\sigma\intinf\abs{\Theta_t''(\xi)}e^{\xi^2/[4(\tau
-\sigma)]}\,d\xi\\
 & \leq & \frac{\norm{f}_\sigma}{2t}\intinf\Theta_t(\xi) 
e^{\xi^2/[4(\tau-\sigma)]}\left(\frac{\xi^2}{2t}+1\right)\,d\xi\\
 & = & \frac{\norm{f}_\sigma}{2t}\left[\frac{(\tau-\sigma)^{3/2}}{
(\tau-\sigma-t)^{3/2}}+\frac{\sqrt{\tau-\sigma}}{\sqrt{\tau-\sigma-t}}\right].
\end{eqnarray*}
As in the proof of Theorem~\ref{theoremheatweight}(c),
$$
\abs{u_t'(x)}  \leq  \norm{f}_\sigma V_{\xi\in\R}
\frac{\Theta_t'(x-\xi)}{\omega_\sigma(\xi)}.
$$
Note that
\begin{eqnarray*}
V_{\xi\in\R}
\frac{\Theta_t'(x-\xi)}{\omega_\sigma(\xi)}
 & = &  \frac{1}{4\sqrt{\pi}\,t^{3/2}}
\intinf e^{-(\xi-x)^2/(4t)}e^{\xi^2/(4\sigma)}\left|
\frac{(\xi-x)^2}{2t}-1-\frac{\xi(\xi-x)}{2\sigma}\right|\,d\xi\\
 & = & \frac{e^{x^2/(4[\sigma-t])}\sqrt{\sigma}}{2\sqrt{\pi(\sigma-t)}\,t}
\intinf e^{-s^2}\left|2s^2+xs\sqrt{\frac{t}{\sigma(\sigma-t)}} -1
\right|\,ds,
\end{eqnarray*}
upon completing the square.
\end{proof}
An inequality for
$\abs{\partial u(x, t)/\partial t}=\abs{u_t''(x)}$ can be proved in the
same manner.

\section{Uniqueness}\label{sectionuniq}
The example $u(x,t)=\delta'\ast\Theta_t(x)=\Theta_t'(x)=
-x\exp(-x^2/(4t))/(4\sqrt{\pi}\,t^{3/2})$ shows that the heat equation need not
have a unique solution, since $u(x,0)=0$ for all $x$.
Uniqueness can be obtained by imposing
a boundedness condition on norms of solutions.  We prove uniqueness
by reducing to one of the two classical theorems.
\begin{theorem}\label{theoremclassicaluniq1}
Let $u\in C^2(\R)\times C^1((0,\infty))$ such that $u \in C(\R\times[0,\infty))$,
$u_2-u_{11}=0$ in $\R\times(0,\infty)$, $u$ is bounded, $u(x,0)=f(x)$ for
a bounded continuous function $f\fn\R\to\R$. Then the unique solution is given by
$u_t(x)=\fastx$.
\end{theorem}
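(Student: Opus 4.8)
The plan is to subtract off the known convolution solution and apply the weak maximum principle to the difference. First I would set $v(x,t)=\fastx$. Because $f$ is bounded and continuous, the classical facts recalled in Section~\ref{sectionintroduction} give that $v$ is continuous on $\R\times[0,\infty)$, is smooth and solves the heat equation in $\R\times(0,\infty)$, takes on the initial data $v(x,0)=f(x)$, and is bounded, since $\norm{v(\cdot,t)}_\infty\leq\norm{f}_\infty\norm{\Theta_t}_1=\norm{f}_\infty$ by \eqref{heatnorm}. Thus $v$ is itself a bounded solution with the same initial data as $u$, and it suffices to prove that $w:=u-v$ vanishes identically, since then $u=v$ and hence $u_t(x)=\fastx$ as claimed.

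By construction $w$ is continuous on $\R\times[0,\infty)$, lies in $C^2(\R)\times C^1((0,\infty))$, solves $w_2-w_{11}=0$ in $\R\times(0,\infty)$, is bounded, say $\abs{w}\leq M$, and has zero initial data $w(x,0)=0$. The heart of the matter is the classical assertion that a bounded solution of the heat equation with vanishing continuous initial data must be identically zero, which I would establish using the comparison function
$$
\psi_L(x,t)=\frac{M}{L^2}\bigl(x^2+2t\bigr),\qquad L>0,
$$
a heat polynomial satisfying $\partial_t\psi_L-\partial_{xx}\psi_L=0$. Fix a point $(x_0,t_0)$ with $t_0>0$ and choose $T>t_0$ and $L>\abs{x_0}$. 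On the parabolic boundary of the rectangle $[-L,L]\times[0,T]$ one checks $w\leq\psi_L$ directly: on the base $t=0$ we have $w=0\leq Mx^2/L^2=\psi_L$, while on the lateral edges $x=\pm L$ we have $\psi_L=M+2Mt/L^2\geq M\geq w$.

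Applying the weak maximum principle for the heat operator on this bounded rectangle to the exact solution $w-\psi_L$, and symmetrically to $-w-\psi_L$, yields $\abs{w}\leq\psi_L$ throughout $[-L,L]\times[0,T]$. In particular
$$
\abs{w(x_0,t_0)}\leq\psi_L(x_0,t_0)=\frac{M}{L^2}\bigl(x_0^2+2t_0\bigr)\longrightarrow 0\quad\text{as }L\to\infty,
$$
so $w(x_0,t_0)=0$. Since $(x_0,t_0)$ is an arbitrary point of $\R\times(0,\infty)$, continuity forces $w\equiv 0$ on $\R\times[0,\infty)$, and therefore $u_t(x)=\fastx$.

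I do not expect a serious obstacle, as every ingredient is standard. The one point to record with care is the first step, that the convolution $v=\fast$ is genuinely continuous up to $t=0$ with boundary value $f$ when $f$ is merely bounded and continuous (rather than an element of $\alexc$); but this is precisely the classical initial-value fact quoted in the introduction. Everything thereafter is the comparison-function form of the parabolic maximum principle, with the bound collapsing to zero as $L\to\infty$.
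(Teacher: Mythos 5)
Your proof is correct, but there is nothing in the paper to compare it with: the paper does not prove Theorem~\ref{theoremclassicaluniq1} at all. It is stated as one of two classical results (the other being Tychonoff's Theorem~\ref{theoremclassicaluniq2}), with proofs attributed to the literature (\cite{widder}, \cite{widderbook}), and it is then used as a black box in the proofs of the paper's genuinely new results, Theorems~\ref{theoremuniquealex} and~\ref{theoremuniqueacn}. What you have supplied is the standard self-contained argument for this classical theorem: subtract the convolution solution $v=\fast$, whose boundedness, smoothness for $t>0$, and continuity down to $t=0$ are the classical facts recalled in Section~\ref{sectionintroduction}, then annihilate the difference $w=u-v$ by comparison with the caloric polynomial $\psi_L(x,t)=M(x^2+2t)/L^2$ (with $M$ a bound for $\abs{w}$) on rectangles $[-L,L]\times[0,T]$, using the weak maximum principle and letting $L\to\infty$. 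The details check out: $\psi_L$ solves the heat equation, dominates $\abs{w}$ on the parabolic boundary ($w=0$ on the base, $\psi_L\geq M$ on the lateral sides), and $w$ has exactly the regularity --- continuity on the closed rectangle, $C^2$ in $x$ and $C^1$ in $t$ inside --- that the weak maximum principle requires, so $\abs{w}\leq\psi_L$ on the rectangle and $w$ vanishes in the limit. The one point deserving the care you gave it is continuity of $v$ on $\R\times[0,\infty)$ when $f$ is bounded and continuous but not necessarily uniformly continuous: the approximate-identity argument gives $f\ast\Theta_t\to f$ locally uniformly, which is all that continuity of $v$ at $t=0$ needs (the uniform-norm convergence asserted in the introduction would require uniform continuity of $f$, but your argument never uses it). In short, your proposal gives a correct, elementary proof of a statement the paper deliberately outsources; the paper's choice buys brevity and keeps the focus on the new uniqueness theorems in $\alexc$, $\acn$ and $\alexct$, while yours would make Section~\ref{sectionuniq} self-contained.
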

\begin{theorem}\label{theoremclassicaluniq2}
Let $\tau>0$.
Let $u\in C^2(\R)\times C^1((0,\tau))$ such that $u \in C(\R\times[0,\tau))$,
$u_2-u_{11}=0$ in $\R\times(0,\tau)$, $\abs{u(x,t)}\leq Ae^{Bx^2}$ for 
some constants $A$, $B$ and all $(x,t)\in\R\times[0,\tau)$, $u(x,0)=f(x)$ for
a continuous function $f\fn\R\to\R$ satisfying $\abs{f(x)}\leq Ae^{Bx^2}$. 
Then the unique solution is given by
$u_t(x)=\fastx$.
\end{theorem}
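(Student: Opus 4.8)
The plan is to separate the two assertions hidden in the statement: \emph{existence} of a solution (the Gauss--Weierstrass integral $\fast$) and \emph{uniqueness} within the stated Gaussian growth class. The existence half is the classical heat-kernel construction, while the uniqueness half is exactly Tychonoff's theorem, whose engine is a parabolic maximum principle applied against a Gaussian barrier on a growing family of rectangles. So the overall structure I would use is: build $v:=\fast$, show it is a solution in the growth class taking on $f$ at $t=0$; then show any two solutions in the class with the same initial data coincide; and finally identify $u$ with $v$.

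For existence I would first check that $v(x,t)=\fastx=\intinf f(\xi)\Theta_t(x-\xi)\,d\xi$ converges. Since $\abs{f(\xi)}\le Ae^{B\xi^2}$ and $\Theta_t(x-\xi)$ carries the factor $e^{-(x-\xi)^2/(4t)}$, the integrand is dominated by $e^{B\xi^2-(x-\xi)^2/(4t)}$, which is integrable in $\xi$ precisely when $B<1/(4t)$, i.e. for $0<t<1/(4B)$. In the setting of Section~\ref{sectionweighted} one has $B=1/(4\tau)$, so $1/(4B)=\tau$ and the convolution exists on exactly the asserted interval. On this range dominated convergence justifies differentiating under the integral sign, giving $v_2=v_{11}$, and the Gaussian convolution formula shows $v$ again obeys a bound $\abs{v(x,t)}\le A'e^{B'x^2}$ on each slab $\R\times[0,t_0]$ with $t_0<1/(4B)$. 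That $v(\cdot,t)\to f$ pointwise as $t\to 0^+$ follows because $\Theta_t$ is an approximate identity and $f$ is continuous (cf.\ Theorem~\ref{theoremfcts}), so $v$ extends continuously to $\R\times\{0\}$ with $v(\cdot,0)=f$; the required regularity $v\in C^2(\R)\times C^1((0,\tau))$ is the classical analyticity already invoked via \cite[Theorem~10.2.1]{cannon}.

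For uniqueness, let $u_1,u_2$ be two solutions meeting the hypotheses and set $w=u_1-u_2$. Then $w$ solves the heat equation on $\R\times(0,\tau)$, is continuous on $\R\times[0,\tau)$, satisfies $w(\cdot,0)\equiv 0$, and $\abs{w(x,t)}\le 2Ae^{Bx^2}$ uniformly on $\R\times[0,\tau)$. The barrier I would use is $V_\epsilon(x,t)=\epsilon\,(T^\ast-t)^{-1/2}\exp\bigl(x^2/(4(T^\ast-t))\bigr)$; a direct computation gives $\partial_t V_\epsilon=\partial_{xx}V_\epsilon$, so $V_\epsilon$ is a positive solution blowing up only as $t\to T^{\ast-}$. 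Fix a slab width $t_0<1/(4B)$ and choose $t_0<T^\ast<1/(4B)$, so that $1/(4T^\ast)>B$. On the rectangle $[-L,L]\times[0,t_0]$ both $w-V_\epsilon$ and $-w-V_\epsilon$ solve the heat equation and are $\le 0$ on the parabolic boundary: at $t=0$ because $w=0\le V_\epsilon$, and at $x=\pm L$ because for large $L$ one has $V_\epsilon(\pm L,t)\ge \epsilon\,c\,e^{L^2/(4T^\ast)}\ge 2Ae^{BL^2}\ge\abs{w(\pm L,t)}$, the middle inequality holding since $1/(4T^\ast)>B$. The maximum principle then yields $\abs{w}\le V_\epsilon$ on the rectangle; letting $L\to\infty$ and then $\epsilon\to 0^+$ forces $w\equiv 0$ on $\R\times[0,t_0]$. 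I would then iterate this on consecutive slabs of width $<1/(4B)$, each time using the vanishing of $w$ at the slab's lower endpoint as the new zero initial datum, to cover all of $[0,\tau)$; hence $u_1=u_2$. Applying this with $u_1=u$ and $u_2=v$ identifies $u$ with $\fast$.

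I expect the main obstacle to be bookkeeping the interplay between the existence range $t<1/(4B)$ and the uniqueness range $t<\tau$: the representation $u_t=\fast$ literally makes sense only where the convolution converges, so one must either restrict to $\tau\le 1/(4B)$ (the case that arises in the applications, where $B=1/(4\tau)$) or phrase the conclusion purely as uniqueness within the Tychonoff class, which does hold on all of $[0,\tau)$ by the slab-by-slab iteration with fixed step size. The only genuinely delicate estimate is the lateral-boundary domination $\abs{w(\pm L,t)}\le V_\epsilon(\pm L,t)$, and it is exactly this that dictates the short-time step $T^\ast<1/(4B)$ and thus the need to iterate in time; everything else is the standard parabolic maximum principle, so in practice one could simply cite the Tychonoff uniqueness theorem from \cite{john} or \cite{cannon} rather than reproduce the barrier argument.
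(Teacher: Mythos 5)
Your proposal is correct, but there is nothing in the paper to compare it against line by line: the paper does not prove Theorem~\ref{theoremclassicaluniq2} at all. It is quoted as a classical result, attributed to Tychonoff, with the proof referenced to Widder \cite{widder} (see also \cite{widderbook}); its only role in the paper is as a black box in the proof of Theorem~\ref{theoremuniqueweighted}. So your closing remark --- that in practice one would simply cite the Tychonoff theorem --- is precisely what the author does. Your reconstruction is the standard argument from the cited literature: the barrier $V_\epsilon(x,t)=\epsilon(T^\ast-t)^{-1/2}\exp\bigl(x^2/(4(T^\ast-t))\bigr)$, which up to a constant is the heat kernel $\Theta_{t-T^\ast}$ at negative time and hence a positive caloric function; the weak maximum principle applied to $\pm w-V_\epsilon$ on rectangles $[-L,L]\times[0,t_0]$; then $L\to\infty$, $\epsilon\to 0^+$, and iteration over slabs of fixed width $t_0<1/(4B)$, which is legitimate because the Gaussian bound holds with the same $B$ on the whole strip. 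Both halves (existence via the convolution for $t<1/(4B)$, uniqueness via the barrier) are sound. The bookkeeping issue you flag --- that the representation $u_t=\fast$ only makes literal sense where the convolution converges, so the statement should either assume $\tau\leq 1/(4B)$ or be read as a pure uniqueness assertion --- is a genuine imprecision in the classical statement as quoted; it causes no trouble in the paper because Theorem~\ref{theoremclassicaluniq2} is applied there to $\psi_y$, whose initial data is identically zero, so $f\ast\Theta_t\equiv 0$ and only the zero-data-implies-zero-solution content is used, which your slab iteration delivers on all of $[0,\tau)$.
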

The second theorem is due to Tychonoff.  Widder provides a proof in
\cite{widder} and also discusses various
related results: 
T\"acklind's
generalisation of the allowed exponent and
a uniqueness
condition for non-negative solutions.
See also \cite{widderbook}.

For uniqueness in $\alexc$, $\acn$, or $\alexct$,
pointwise conditions are not applicable.  However, we obtain
a uniqueness condition by requiring boundedness of the solution
in the Alexiewicz norm.  We use a method similar to that used by
Hirschman and Widder in proving uniqueness under an $L^p$ norm condition 
\cite[Theorem~9.2]{hirschmanwidder}.
\begin{theorem}
[Uniqueness in $\alexc$]
\label{theoremuniquealex}
Let $u\in C^2(\R)\times C^1((0,\infty))$ such that
$u_2-u_{11}=0$ in $\R\times(0,\infty)$, $\norm{u_t}$ is bounded, 
$\lim_{t\to 0^+}\norm{u_t-f}=0$ for some $f\in\alexc$.
The unique solution is then given by
$u_t(x)=\fastx$.
\end{theorem}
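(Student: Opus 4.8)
The plan is to reduce to the classical bounded uniqueness Theorem~\ref{theoremclassicaluniq1} by regularising $u$ in the space variable with a test function; this is the device that converts the Alexiewicz-norm bound into a genuine pointwise bound. Write $M=\sup_{t>0}\norm{u_t}<\infty$ and fix $\rho\in\D$. For $t>0$ define $w(x,t)=u_t\ast\rho(x)=\intinf u(x-\xi,t)\rho(\xi)\,d\xi$. Since each $u(\cdot,t)$ is continuous (as $u$ is $C^2$ in $x$) and $\rho$ has compact support, this is an ordinary integral over $\supp\rho$ that coincides with the convolution in $\alexc$, so the estimate $\norm{u_t\ast\rho}_\infty\leq\norm{u_t}(\norm{\rho}_\infty+V\rho)\leq M(\norm{\rho}_\infty+V\rho)$ shows $w$ is bounded on $\R\times(0,\infty)$, uniformly in $t$. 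Differentiating under the integral sign, which is legitimate because $u,u_x,u_{xx},u_t$ are continuous and $\supp\rho$ is compact, and using $u_t=u_{xx}$ gives $w_t=w_{xx}$, so $w$ is a smooth classical solution of the heat equation.

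Next I would check that $w$ takes on continuous bounded initial data. Since $f\in\alexc$ and $\rho\in\bv$, the function $f\ast\rho$ is continuous and bounded, and
\[
\norm{w(\cdot,t)-f\ast\rho}_\infty=\norm{(u_t-f)\ast\rho}_\infty\leq\norm{u_t-f}(\norm{\rho}_\infty+V\rho)\to 0
\]
as $t\to 0^+$ by hypothesis. Hence $w$ extends continuously to $\R\times[0,\infty)$ with $w(\cdot,0)=f\ast\rho$, and $w$ now satisfies every hypothesis of Theorem~\ref{theoremclassicaluniq1}, so $w(x,t)=(f\ast\rho)\ast\Theta_t(x)$. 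By commutativity and associativity of the convolution (\cite{talvilaconv}), together with $\rho\ast\Theta_t=\Theta_t\ast\rho$, this equals $(f\ast\Theta_t)\ast\rho(x)$. Therefore $(u_t-f\ast\Theta_t)\ast\rho=0$ for every $\rho\in\D$.

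Finally, set $g=u_t-f\ast\Theta_t$, which lies in $\alexc$ since $f\ast\Theta_t\in\alexc$ by Theorem~\ref{theoremheatc}(f). Evaluating the identity $g\ast\rho=0$ at $x=0$ gives $\langle g,\check\rho\rangle=0$, where $\check\rho(\xi)=\rho(-\xi)$; as $\rho$ ranges over $\D$ so does $\check\rho$, whence $\langle g,\psi\rangle=0$ for all $\psi\in\D$ and thus $g=0$ as a distribution. This yields $u_t=f\ast\Theta_t$, as required. I expect the hard part to be the bookkeeping in the first two paragraphs rather than any new idea: one must confirm that the test-function convolution of the merely-$\alexc$ datum $u_t$ agrees with the classical integral, that $w$ inherits both the uniform bound and the classical heat-equation regularity continuously up to $t=0$, and that the associativity step identifying $(f\ast\rho)\ast\Theta_t$ with $(f\ast\Theta_t)\ast\rho$ is valid in this mixed Alexiewicz/$L^1$ setting, all of which rest on the convolution theory of \cite{talvilaconv}.
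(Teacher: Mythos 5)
Your proof is correct, but it follows a genuinely different route from the paper's. Both arguments reduce to the classical bounded-uniqueness theorem (Theorem~\ref{theoremclassicaluniq1}), but the paper regularises with the interval average $\psi_y(x,t)=(2y)^{-1}\int_{x-y}^{x+y}h_t(\xi)\,d\xi$ applied to the difference $h=u-v$ of two solutions: the bound $\abs{\psi_y(x,t)}\leq\norm{h_t}/(2y)$ is immediate from the definition of the Alexiewicz norm, the heat equation for $\psi_y$ follows from the fundamental theorem of calculus, Theorem~\ref{theoremclassicaluniq1} gives $\psi_y\equiv 0$ (zero initial data), and letting $y\to 0^+$ recovers $h=0$ by mere continuity of $h$ --- no convolution theory and no distributional duality are needed. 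You instead mollify with $\rho\in\D$, apply Theorem~\ref{theoremclassicaluniq1} to $u_t\ast\rho$ itself with the nonzero initial datum $f\ast\rho$, identify $(f\ast\rho)\ast\Theta_t$ with $(f\ast\Theta_t)\ast\rho$, and descend by duality ($g\ast\rho=0$ for all $\rho\in\D$ forces $g=0$). This is the more standard PDE device and the endgame is clean, but it puts real weight on the convolution machinery of \cite{talvilaconv}: you need $f\ast\rho\in\alexc$, the compatibility of the pointwise ($\bv$) and limit ($L^1$) definitions of convolution for $\rho,\Theta_t\in L^1\cap\bv$, and the mixed associativity $(f\ast\rho)\ast\Theta_t=f\ast(\rho\ast\Theta_t)=(f\ast\Theta_t)\ast\rho$ --- all of which do hold (see \cite{talvilaconv}, Theorems~2.1 and~3.4), so the step you flagged survives scrutiny. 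Note also that you could bypass the associativity step entirely by running your own argument on $h_t=u_t-f\ast\Theta_t$, which satisfies the hypotheses with initial datum $0$ by Theorem~\ref{theoremheatc} and linearity: then $h_t\ast\rho$ is a bounded classical solution with uniformly vanishing initial data, Theorem~\ref{theoremclassicaluniq1} gives $h_t\ast\rho\equiv 0$, and your duality step finishes the proof without ever invoking associativity.
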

\begin{proof}
From Theorem~\ref{theoremheatc}
we see that $f\ast\Theta_t$ satisfies the hypotheses.

If $u$ and $v$ are two solutions, let $h=u-v$.  Then  
$h\in C^2(\R)\times C^1((0,\infty))$, 
$h_2-h_{11}=0$ in $\R\times(0,\infty)$, $\norm{h_t}$ is bounded, 
$\lim_{t\to 0^+}\norm{h_t}=0$.  Let $y,t>0$ and let
$\psi_y(x,t)=(2y)^{-1}\int_{x-y}^{x+y}h_t(\xi)\,d\xi$.  Then
$$
\frac{\partial \psi_y(x,t)}{\partial t}  =  
\frac{1}{2y}\int_{x-y}^{x+y}\frac{\partial h(\xi,t)}{\partial t}\,d\xi
  =  \frac{1}{2y}\int_{x-y}^{x+y}h_t''(\xi)\,d\xi
  =  \frac{h_t'(x+y)-h_t'(x-y)}{2y}.
$$
And,
$$
\frac{\partial^2 \psi_y(x,t)}{\partial x^2}  =
\frac{\partial}{\partial x}\frac{h(x+y,t)-h(x-y,t)}{2y}  =
\frac{h_1(x+y,t)-h_1(x-y,t)}{2y}.
$$
Hence, $\psi_y\in C^2(\R)\times C^1((0,\infty))$ and is a solution of the heat equation.
Note that $\abs{\psi_y(x,t)}\leq \norm{h_t}/(2y)$.  It then follows
that $\psi_y$ is bounded for $(x,t)\in\R\times(0,\infty)$
and tends to $0$ as $t\to0^+$. Define $\psi_y(x,0)=0$.  Let $x\in\R$ and 
$(\alpha,\beta)\in
\R\times(0,\infty)$.  Then $\abs{\psi_y(x,0)-\psi_y(\alpha,\beta)}\leq
\norm{h_\beta}/(2y)\to 0$ as $(\alpha,\beta)\to (x,0)$.  Hence, $\psi_y\in
C(\R\times[0,\infty))$.  By Theorem~\ref{theoremclassicaluniq1} we have
$\psi_y=0$ for each $y>0$.  By the continuity of $h$ we get
$\lim_{y\to 0^+}\psi_y(x,t)=0=h(x,t)$.
\end{proof}
\begin{lemma}\label{lemmauniqueacn}
Let $n\in\N$.  Let $g_n(x)=\sum_{k=0}^n\binom{n}{k}(-1)^k\chi_{(a_k,a_{k+1})}(x)$,
where $a_k=k/(n+1)$.  Define
$G_n(x)=\int_0^x\cdots\int_0^{x_{i+1}}\cdots\int_0^{x_2}g_n(x_1)\,dx_1\ldots
dx_i\ldots dx_n$.  Then $G_n\in C^{n-1}([0,1])$, $G_n(x)=O(x^n)$ as $x\to 0^+$,
$G_n(x)=O((x-1)^{n})$ as $x\to 1^-$ and $G_n>0$ on $(0,1)$.
\end{lemma}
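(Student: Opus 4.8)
The plan is to reduce everything to a single closed form for $G_n$ and then read off the four conclusions from it. Since $G_n$ is the $n$-fold iterated integral of $g_n$ from $0$, Cauchy's formula for repeated integration gives $G_n(x)=\frac{1}{(n-1)!}\int_0^x(x-t)^{n-1}g_n(t)\,dt$. Because $\frac{1}{(n-1)!}\int_0^x(x-t)^{n-1}\chi_{(a,\infty)}(t)\,dt=(x-a)_+^n/n!$, I would write each $\chi_{(a_k,a_{k+1})}=\chi_{(a_k,\infty)}-\chi_{(a_{k+1},\infty)}$ and collect the coefficient of $(x-a_k)_+^n$; the resulting two-term differences $\binom{n}{k}(-1)^k-\binom{n}{k-1}(-1)^{k-1}$ simplify to $(-1)^k\binom{n+1}{k}$ by Pascal's rule. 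Thus the first step is to establish
\[
G_n(x)=\frac{1}{n!}\sum_{k=0}^{n+1}(-1)^k\binom{n+1}{k}(x-a_k)_+^n,\qquad a_k=\frac{k}{n+1}.
\]

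From this formula three of the four claims are immediate. Each truncated power $(x-a_k)_+^n$ lies in $C^{n-1}(\R)$ (its $n$th derivative is a jump), so $G_n\in C^{n-1}([0,1])$, giving smoothness. On $(0,a_1)$ only the $k=0$ term is active, so $G_n(x)=x^n/n!$ there, which is $O(x^n)$ as $x\to0^+$. For $x\ge1$ every truncation is inactive, and $n!\,G_n(x)=\sum_{k=0}^{n+1}(-1)^k\binom{n+1}{k}(x-a_k)^n$ is, up to sign, the $(n+1)$st forward difference in $k$ of the degree-$n$ polynomial $k\mapsto(x-a_k)^n$, hence vanishes identically; so $G_n\equiv0$ on $[1,\infty)$. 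Since $G_n\in C^{n-1}$, its derivatives of order $\le n-1$ vanish at $x=1$, and as $G_n^{(n-1)}$ is Lipschitz with $G_n^{(n-1)}(1)=0$, integrating the bound $\abs{G_n^{(n-1)}(x)}\le L(1-x)$ up $n-1$ times yields $G_n(x)=O((x-1)^n)$ as $x\to1^-$.

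The positivity $G_n>0$ on $(0,1)$ is the crux and the one point that does not follow by mere bookkeeping. The strategy is to recognize the displayed sum as a convolution power: with $h=1/(n+1)$ one has $G_n=\chi_{(0,h)}^{\ast(n+1)}$, the $(n+1)$-fold convolution of $\chi_{(0,h)}$ with itself. I would verify this identity by induction on $n$, noting that convolving with $\chi_{(0,h)}$ is integration over a window of length $h$, which raises each truncation power by one and reproduces exactly the Pascal recurrence behind the closed form (a Laplace-transform computation, using that the transform of $\chi_{(0,h)}$ is $(1-e^{-hs})/s$, gives the same identity in one line). Granting the convolution representation, strict positivity follows from a clean induction: $\chi_{(0,h)}^{\ast j}$ is supported on $[0,jh]$, and for $0<x<(j+1)h$ the integrand $u\mapsto\chi_{(0,h)}^{\ast j}(x-u)$ is nonnegative and strictly positive on the nonempty subinterval $(\max(0,x-jh),\min(h,x))$ of $(0,h)$, so $\chi_{(0,h)}^{\ast(j+1)}(x)>0$. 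Taking $j+1=n+1$ gives $G_n>0$ on $(0,(n+1)h)=(0,1)$. The main obstacle is thus the identification $G_n=\chi_{(0,h)}^{\ast(n+1)}$; once that bridge is built, all four assertions fall out, and it is precisely what converts the alternating-sign, seemingly oscillatory definition of $g_n$ into a manifestly positive object.
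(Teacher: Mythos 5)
Your proposal is correct, and it takes a genuinely different route from the paper's. Both proofs begin with Cauchy's repeated-integration formula, but the paper never forms your truncated-power expansion; instead it evaluates the integral only at the knots, obtaining $G_n(a_{l+1})=A(n,l)/[n!(n+1)^n]$ where $A(n,l)$ are Eulerian numbers, quotes their combinatorial positivity (with $A(n,n)=0$), and handles the endpoint behaviour at $x\to 1^-$ by proving the reflection symmetry $G_n(1-x)=G_n(x)$ from $g_n(1-y)=(-1)^ng_n(y)$ together with a vanishing-sum identity. Your key lemma is instead the identification $G_n=\chi_{(0,h)}^{\ast(n+1)}$ with $h=1/(n+1)$ (so $G_n$ is the uniform B-spline on $[0,1]$), and it is correct: the Laplace transform of $g_n$ is $(1-e^{-hs})^{n+1}/s$, hence that of $G_n$ is $\bigl[(1-e^{-hs})/s\bigr]^{n+1}$, which is the transform of the $(n+1)$-fold convolution, and uniqueness of the transform for compactly supported continuous functions closes the identity. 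What your route buys: strict positivity at every interior point by the convolution-support induction, and $G_n\equiv 0$ on $[1,\infty)$ for free (an $(n+1)$-st finite difference of a degree-$n$ polynomial), so you need neither the Eulerian-number identity nor the symmetry computation. In fact your argument is tighter on the crux: the paper asserts that to prove $G_n>0$ on $(0,1)$ ``it suffices to evaluate at each $a_k$,'' which is not justified there --- a piecewise polynomial of degree $n$ can be positive at its knots yet dip negative between them --- whereas your convolution representation makes interior positivity manifest. What the paper's route buys is the explicit knot values (the Eulerian numbers) and the symmetry $G_n(1-x)=G_n(x)$, which your proof does not exhibit but also does not need.
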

\begin{proof}
The definition shows $G_n\in C^{n-1}([0,1])$.  

We have $G_n(x)=[(n-1)!]^{-1}\int_0^x(x-x_1)^{n-1}g_n(x_1)dx_1$.
To prove that $G_n>0$ it suffices to evaluate at each $a_k$.  Hence,
\begin{eqnarray*}
G_n(a_{l+1}) & = & \frac{1}{(n-1)!}\sum_{k=0}^l\binom{n}{k}(-1)^k\int_{a_k}^{a_{k+1}}
(a_{l+1}-x_1)^{n-1}\,dx_1\\
 & = & \frac{1}{n!(n+1)^n}\sum_{k=0}^l\binom{n+1}{k}(-1)^k(l+1-k)^n\\
 & = & \frac{A(n,l)}{n!(n+1)^n},
\end{eqnarray*}
where $A(n,l)$ is the Eulerian number of the first kind.  Combinatoric arguments
show these are positive for $0\leq l<n$ and that $A(n,n)=0$.  See, for example,
\cite{grahamknuth}.   

It is clear that
$$
\lim_{x\to 0^+}\frac{G_n(x)}{x^n}=\lim_{x\to 0^+}\frac{G_n^{(n)}(x)}{n!}=
\frac{g_n(0^+)}{n!}=\frac{1}{n!}.
$$
Note that $1-y\in(a_k,a_{k+1})$ if and only if $y\in(a_{n-k},a_{n-k+1})$.  So,
$g_n(1-y)=(-1)^ng_n(y)$.  A change of variables then shows
\begin{equation}
G_n(1-x)=G_n(x)+\frac{(-1)^n}{(n-1)!}\int_0^1(y-x)^{n-1}g_n(y)\,dy.\label{G_na}
\end{equation}
And,
\begin{eqnarray}
\int_0^1(y-x)^{n-1}g_n(y)\,dy & = & \sum_{k=0}^n\binom{n}{k}(-1)^k\int_{a_k}^{a_{k+1}}(y-x)^{n-1}\,dy
\notag\\
 & = & \frac{(-1)^{n+1}}{n(n+1)^n}\sum_{k=0}^{n+1}\binom{n+1}{k}(-1)^k\left[(n+1)x-k\right]^n\notag\\
 & = & -\frac{1}{n}\sum_{m=0}^n\binom{n}{m}\frac{(-x)^{n-m}}{(n+1)^m}T_{n+1,m},\label{G_nc}
\end{eqnarray}
where $T_{n,m}=\sum_{k=0}^{n}\binom{n}{k}
(-1)^kk^m$.  (The term $k^m$ is defined to be $1$ when $k=m=0$.)
  Let $S_n(x)=(1-x)^n=\sum_{k=0}^n\binom{n}{k}(-1)^kx^k$.  Then 
$$
S_n^{(m)}(1)=0=\sum_{k=0}^n\binom{n}{k}(-1)^kk(k-1)\cdots(k-m+1)
$$ for
$0\leq m\leq n-1$.  It follows that $T_{n,m}$ is a linear combination of
$S_n^{(p)}(1)$ for $0\leq p\leq m$.  Hence, $T_{n,m}=0$ ($0\leq m\leq n-1$).  
From \eqref{G_na} and \eqref{G_nc} we see that
$G_n(1-x)=G_n(x)$.  This gives
the order relation as $x\to 1^-$.

\end{proof}

\begin{theorem}[Uniqueness in $\acn$]
\label{theoremuniqueacn}
Let $n\geq 2$.
Let $u\in C^2(\R)\times C^1((0,\infty))$ such that
$u_2-u_{11}=0$ in $\R\times(0,\infty)$, $\acnorm{u_t}$ is bounded, 
$\lim_{t\to 0^+}\acnorm{u_t-f}=0$ for some $f\in\acn$.
The unique solution is then given by
$u_t(x)=\fastx$.
\end{theorem}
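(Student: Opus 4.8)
The plan is to mirror the proof of Theorem~\ref{theoremuniquealex}, reducing the claim to the classical Tychonoff-type uniqueness Theorem~\ref{theoremclassicaluniq1} by means of a smoothing adapted to the $\acn$ structure. First I would dispose of existence: by Theorem~\ref{theoremheatacn}(b),(f),(g) the function $f\ast\Theta_t$ is $C^\infty$ on $\R\times(0,\infty)$, solves the heat equation, satisfies $\acnorm{f\ast\Theta_t}\le\acnorm{f}$ (so $\acnorm{u_t}$ is bounded), and has $\acnorm{(f\ast\Theta_t)-f}\to0$ as $t\to0^+$. Thus only uniqueness needs proof. Given two solutions $u,v$, set $h=u-v$, so that $h\in C^2(\R)\times C^1((0,\infty))$, $h_2-h_{11}=0$ on $\R\times(0,\infty)$, $\acnorm{h_t}$ is bounded, and $\acnorm{h_t}\to0$ as $t\to0^+$; the goal is $h\equiv0$.

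The new difficulty, relative to the $\alexc$ case, is that the flat average $(2y)^{-1}\int_{x-y}^{x+y}h_t$ admits no useful bound: controlling a pairing against $h_t\in\acn$ by $\acnorm{h_t}$ requires a multiplier in $\Ibv^{n-1}$ together with the H\"older inequality of Proposition~\ref{holderacn}. This is exactly what Lemma~\ref{lemmauniqueacn} supplies. I would take the kernel $G_n$ from that lemma, extended by $0$ outside $[0,1]$; since $G_n(x)=O(x^n)$ and $G_n(x)=O((x-1)^n)$ at the endpoints, the extension is $C^{n-1}$ on $\R$, and $G_n^{(n-1)}$ is a continuous, compactly supported function of bounded variation, hence in $\nbv$, so that $G_n\in\Ibv^{n-1}$. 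For $y>0$ define the scaled kernel $\Phi_y(\xi)=G_n\big((\xi-x+y)/(2y)\big)$, supported on $[x-y,x+y]$, and set
\[
\psi_y(x,t)=\frac{1}{2yI_n}\int_{x-y}^{x+y}\Phi_y(\xi)\,h_t(\xi)\,d\xi,\qquad
I_n=\int_0^1 G_n(s)\,ds>0 .
\]
Because an average of translates of a solution of the heat equation is again a solution (differentiate under the integral and use $h_2=h_{11}$), $\psi_y\in C^2(\R)\times C^1((0,\infty))$ solves the heat equation for each fixed $y$.

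For boundedness I would note that, $h_t$ being continuous and $\Phi_y$ a compactly supported multiplier, the integral above coincides with the distributional pairing $\int h_t\Phi_y$, so Proposition~\ref{holderacn} applies with $f=h_t$ and $h=\Phi_y$. Since $\Phi_y$ vanishes outside $[x-y,x+y]$ we have $\inf\abs{\Phi_y^{(n-1)}}=0$ and $V\Phi_y^{(n-1)}=(2y)^{-(n-1)}VG_n^{(n-1)}$, whence $\abs{\psi_y(x,t)}\le C_{n,y}\acnorm{h_t}$ with $C_{n,y}=VG_n^{(n-1)}/[(2y)^n I_n]$ independent of $x$. Thus $\psi_y$ is bounded on $\R\times(0,\infty)$, and as $\acnorm{h_t}\to0$ it extends continuously to $\R\times[0,\infty)$ with $\psi_y(\cdot,0)\equiv0$. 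Theorem~\ref{theoremclassicaluniq1} then forces $\psi_y\equiv0$ for every $y>0$. Finally, for fixed $(x,t)$ with $t>0$ the substitution $\xi=x-y+2ys$ gives $\psi_y(x,t)=I_n^{-1}\int_0^1 G_n(s)\,h(x-y+2ys,t)\,ds$, and continuity of $h_t$ together with $G_n\ge0$ and $\int_0^1 G_n=I_n$ yields $\psi_y(x,t)\to h(x,t)$ as $y\to0^+$. Hence $h(x,t)=0$, so $u=v$ and the unique solution is $u_t=f\ast\Theta_t$.

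I expect the main obstacle to be the verification that the scaled kernel is a legitimate $\acn$ multiplier and that the H\"older bound scales correctly, namely confirming $G_n\in\Ibv^{n-1}$ with $G_n^{(n-1)}\in\nbv$ and tracking the factor $(2y)^{-(n-1)}$ in $V\Phi_y^{(n-1)}$. Everything else---the heat-equation property of the average, the reduction to the classical theorem, and the approximate-identity limit---is routine once the positive, highly-vanishing kernel $G_n$ is in hand; its positivity is precisely what keeps the normalising constant $I_n$ nonzero and lets the $y\to0^+$ limit recover $h$.
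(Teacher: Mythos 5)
Your proposal is correct and takes essentially the same route as the paper: the same smoothed average $\psi_y$ built from the kernel of Lemma~\ref{lemmauniqueacn}, bounded by $\acnorm{h_t}$ via Proposition~\ref{holderacn}, annihilated by Theorem~\ref{theoremclassicaluniq1}, and then $h$ recovered in the limit $y\to 0^+$. The only differences are cosmetic: the paper uses the rougher kernel $G_{n-1}$ (whose $(n-1)$st derivative is the step function $g_{n-1}\in\nbv$) without normalisation, verifies the heat equation for $\psi_y$ by integration by parts, and recovers $h$ via the mean value theorem for integrals, whereas you use the smoother $G_n$, normalise by $I_n$, differentiate under the integral after a change of variables, and pass to the limit by continuity.
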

\begin{proof}
Theorem~\ref{theoremheatacn}(a),(b),(f),(g)
show that $f\ast\Theta_t$ satisfies the
hypotheses.  The other part of the proof is similar to that of
Theorem~\ref{theoremuniquealex}.  Use the same notation for $h_t$.
From Lemma~\ref{lemmauniqueacn}, the function
$\xi\mapsto G_{n-1}((\xi-x+y)/(2y))\chi_{(x-y,x+y)}(\xi)$ is in
$\Ibv^{n-1}$ (Definition~\ref{defnacnorm}).  Since $h_t\in\acn$ the integral 
\begin{eqnarray*}
\psi_y(x,t) & = & \frac{1}{2y}\intinf
G_{n-1}\left(\frac{\xi}{2y}-\frac{x-y}{2y}\right)\chi_{(x-y,x+y)}(\xi)h_t(\xi)\,d\xi\\
 & = & \frac{1}{2y}\int_{x-y}^{x+y}
G_{n-1}\left(\frac{\xi}{2y}-\frac{x-y}{2y}\right)h_t(\xi)\,d\xi
\end{eqnarray*}
exists. Integrating by parts gives
\begin{eqnarray*}
\frac{\partial\psi_y(x,t)}{\partial t} & = & 
\frac{1}{2y}\int_{x-y}^{x+y}
G_{n-1}\left(\frac{\xi}{2y}-\frac{x-y}{2y}\right)
\frac{\partial h(\xi,t)}{\partial t}\,d\xi\\
 & = & -\frac{1}{(2y)^{2}}\int_{x-y}^{x+y}
G_{n-1}'\left(\frac{\xi}{2y}-\frac{x-y}{2y}\right)
h_t'(\xi)\,d\xi.
\end{eqnarray*}
And,
\begin{eqnarray*}
\frac{\partial^2\psi_y(x,t)}{\partial x^2} & = & 
\frac{1}{(2y)^3}\int_{x-y}^{x+y}
G''_{n-1}\left(\frac{\xi}{2y}-\frac{x-y}{2y}\right)
h_t(\xi)\,d\xi\\
 & = &-\frac{1}{(2y)^{2}}\int_{x-y}^{x+y}
G_{n-1}'\left(\frac{\xi}{2y}-\frac{x-y}{2y}\right)
h_t'(\xi)\,d\xi, 
\end{eqnarray*}
so that $\psi_y$ is a solution of the heat equation. From Proposition~\ref{holderacn},
$$
\abs{\psi_y(x,t)}  \leq  \frac{\norm{h_t}^{(n)}}{(2y)^n}\left[
\linf_{[0,1]}\abs{g_{n-1}}+V_{[0,1]}g_{n-1}\right]
  =  \frac{\norm{h_t}^{(n)}}{y^n}
\to 0 \text{ as } t\to 0^+.
$$
For fixed $y>0$, $\psi_y$ can then be continuously extended to vanish on $t=0$.
Theorem~\ref{theoremclassicaluniq1} shows $\psi_y=0$. Let
$(x,t)\in\R\times(0,\infty)$.  Since $h_t$ is continuous and $G_{n-1}>0$ in
$(0,1)$ the mean value theorem for integrals shows there is $x^\ast\in(x-y,x+y)$
such that
$$\lim_{y\to 0^+}\psi_y(x,t)  = 0=  \lim_{y\to 0^+}h_t(x^\ast)\int_0^1
G_{n-1}(\xi)\,d\xi=h_t(x)\int_0^1G_{n-1}(\xi)\,d\xi.
$$
It now follows that $h_t(x)=0$.
\end{proof}

\begin{theorem}[Uniqueness in $\alexct$]
\label{theoremuniqueweighted}
Let $\tau>0$.
Let $u\in C^2(\R)\times C^1((0,\tau))$ such that
$u_2-u_{11}=0$ in $\R\times(0,\tau)$.  Suppose that for each $0<\sigma<\tau$
the quantity $\norm{u_t}_\sigma\sqrt{\tau-\sigma-t}$ 
is bounded for all
$0<t<\tau-\sigma$ and
$\lim_{t\to 0^+}\norm{u_t-f}_\sigma=0$ for some $f\in\alexct$.
The unique solution is then given by
$u_t(x)=\fastx$.
\end{theorem}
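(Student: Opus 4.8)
The plan is to mirror the spatial-averaging argument of Theorem~\ref{theoremuniquealex} and Theorem~\ref{theoremuniqueacn}, but to invoke the growth-type uniqueness Theorem~\ref{theoremclassicaluniq2} (Tychonoff) in place of the bounded-solution Theorem~\ref{theoremclassicaluniq1}, since in the weighted setting the averaged function will only be controlled up to a factor $e^{x^2/(4\sigma)}$ and hence cannot be expected to be bounded. First I would verify that $f\ast\Theta_t$ satisfies the hypotheses: Theorem~\ref{theoremheatweight}(b) supplies the regularity and shows it solves the heat equation in $\R\times(0,\tau)$; Theorem~\ref{theoremheatweight}(g) gives $\lim_{t\to 0^+}\norm{u_t-f}_\sigma=0$ for each $0<\sigma<\tau$; and Theorem~\ref{theoremheatweight}(f) yields $\norm{f\ast\Theta_t}_\sigma\sqrt{\tau-\sigma-t}\leq\sqrt{\tau-\sigma}\,\norm{f}_\sigma$, so the boundedness requirement holds. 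This is precisely the point noted after Theorem~\ref{theoremheatweight}: the coefficient in (f) stays bounded as $t\to 0^+$, whereas the one in (i) does not.

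For uniqueness, suppose $u$ and $v$ both satisfy the hypotheses and set $h=u-v$. Then $h$ is a classical (hence smooth, by interior regularity) solution of the heat equation in $\R\times(0,\tau)$; for each $0<\sigma<\tau$ the quantity $\norm{h_t}_\sigma\sqrt{\tau-\sigma-t}$ is bounded on $(0,\tau-\sigma)$ — in particular $h_t\in\alexcany{\sigma}$ — and $\norm{h_t}_\sigma\to 0$ as $t\to 0^+$. Fix $t_0\in(0,\tau)$ and choose $\sigma\in(0,\tau-t_0)$ together with $T\in(t_0,\tau-\sigma)$; the freedom to shrink $\sigma$ so that $\tau-\sigma>t_0$ is what lets a single fixed-weight argument reach the prescribed time $t_0$. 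Set $\psi_y(x,t)=(2y)^{-1}\int_{x-y}^{x+y}h_t(\xi)\,d\xi$. Because $h$ is a classical solution, differentiating under the integral and using $h_t=h_{xx}$ gives $\partial_t\psi_y=(2y)^{-1}[h_x(x+y,t)-h_x(x-y,t)]=\partial_{xx}\psi_y$, exactly as in Theorem~\ref{theoremuniquealex}, so $\psi_y$ solves the heat equation on $\R\times(0,\tau)$.

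The crucial step is the growth estimate. Writing $h_t(\xi)=[h_t\omega_\sigma](\xi)\,e^{\xi^2/(4\sigma)}$ and applying the H\"older inequality \eqref{holder} to $h_t\omega_\sigma\in\alexc$ (with $\norm{h_t\omega_\sigma}=\norm{h_t}_\sigma$ by Theorem~\ref{theoremalexct}(d)) against the multiplier $g(\xi)=e^{\xi^2/(4\sigma)}\chi_{(x-y,x+y)}(\xi)\in\bv$, whose variation is at most $2e^{(\abs{x}+y)^2/(4\sigma)}$, I obtain $\abs{\psi_y(x,t)}\leq y^{-1}\norm{h_t}_\sigma\,e^{(\abs{x}+y)^2/(4\sigma)}$. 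On $[0,T]$ the boundedness hypothesis forces $\norm{h_t}_\sigma\leq M/\sqrt{\tau-\sigma-T}$, and the cross term $e^{\abs{x}y/(2\sigma)}$ is absorbed into any $e^{\epsilon x^2}$, so $\abs{\psi_y(x,t)}\leq Ae^{Bx^2}$ on $\R\times[0,T)$ for any fixed $B>1/(4\sigma)$. Since $\norm{h_t}_\sigma\to 0$, setting $\psi_y(x,0)=0$ extends $\psi_y$ continuously to $\R\times[0,T)$.

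Now Tychonoff's Theorem~\ref{theoremclassicaluniq2} applies on $\R\times[0,T)$ and gives $\psi_y\equiv 0$ for every $y>0$. Letting $y\to 0^+$ and using the continuity of $h_t$, the average $\psi_y(x,t)\to h_t(x)$, whence $h_t(x)=0$ for $0<t<T$ and in particular $h_{t_0}=0$. As $t_0\in(0,\tau)$ was arbitrary, $h\equiv 0$ and $u=v$. I expect the growth estimate to be the main obstacle: one must convert the purely integral (weighted Alexiewicz) control on $h_t$ into a pointwise bound of the exact Tychonoff form $Ae^{Bx^2}$, uniformly in $t$ on a time interval extending past $t_0$, which is exactly where the $\sqrt{\tau-\sigma-t}$-weighted boundedness hypothesis and the choice of a small enough $\sigma$ are both needed.
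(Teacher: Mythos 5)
Your proposal is correct and follows essentially the same route as the paper: verify the hypotheses via Theorem~\ref{theoremheatweight}(a),(b),(f),(g), form the spatial average $\psi_y$, use the H\"older inequality \eqref{holder} against the multiplier $e^{\xi^2/(4\sigma)}\chi_{(x-y,x+y)}$ to get a Tychonoff-type bound $Ae^{Bx^2}$, apply Theorem~\ref{theoremclassicaluniq2} on a time strip short of $\tau-\sigma$, and let $y\to 0^+$; your bookkeeping with $t_0$, $\sigma$, $T$ is just a reparametrisation of the paper's choice of $\rho$ and $\sigma<\rho$. (The only quibble is a harmless constant: the variation of $e^{\xi^2/(4\sigma)}\chi_{(x-y,x+y)}$ is bounded by $2\bigl[e^{(x-y)^2/(4\sigma)}+e^{(x+y)^2/(4\sigma)}\bigr]$ rather than $2e^{(\abs{x}+y)^2/(4\sigma)}$, which changes nothing in the $Ae^{Bx^2}$ form.)
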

\begin{proof}
Theorem~\ref{theoremheatweight}(a),(b),(f),(g)
show that $f\ast\Theta_t$ satisfies the
hypotheses.  The other part of the proof is similar to that of
Theorem~\ref{theoremuniquealex}. 
Use the same notation.  As before, $h$ and $\psi_y$  are solutions of the heat 
equation
in $\R\times(0,\tau)$.
Now use the H\"older inequality \eqref{holder} to get
$$
\abs{\psi_y(x,t)}=\frac{1}{2y}\left|\int_{x-y}^{x+y}\left[h_t(\xi)\omega_\sigma(\xi)\right]
\,\frac{d\xi}{\omega_\sigma(\xi)}\right|
\leq \frac{\norm{h_t}_\sigma}{y}\left[e^{(x+y)^2/(4\sigma)}+e^{(x-y)^2/(4\sigma)}\right].
$$
If $0<y\leq 1$ then $\abs{\psi_y(x,t)}\leq 2\norm{h_t}_\sigma\exp(1/\sigma)\exp(x^2/\sigma)/y$.
Fix $0<\rho<\tau$ and let $0<\sigma<\rho$.
We then have $A= 2\sup_{0<t\leq\tau-\rho}\norm{h_t}_\sigma\exp(1/\sigma)/y$ and $B=1/\sigma$ in 
Theorem~\ref{theoremclassicaluniq2}.
Define $\psi_y(x,0)=0$.
Let $x\in\R$ and 
$(\alpha,\beta)\in
\R\times(0,\tau)$.  Then $\abs{\psi_y(x,0)-\psi_y(\alpha,\beta)}\leq
2\norm{h_\beta}_\sigma\exp(1/\sigma)\exp(\alpha^2/\sigma)/y\to 0$ as $(\alpha,\beta)\to (x,0)$.
Hence, $\psi_y\in
C(\R\times[0,\tau))$.  By Theorem~\ref{theoremclassicaluniq2} we have
$\psi_y=0$ on $\R\times[0,\tau-\rho)$ for each $y>0$.
By the continuity of $h$ we get
$\lim_{y\to 0^+}\psi_y(x,t)=0=h(x,t)$ for each 
$(x,t)\in\R\times(0,\tau-\rho)$.  But, $0<\rho<\sigma<\tau$ were arbitrary
so we have $h=0$ on $\R\times(0,\tau)$.
\end{proof}

\end{document}